\newcommand{\footnoteremember}[2]{
  \footnote{#2}
  \newcounter{#1}
  \setcounter{#1}{\value{footnote}}
}
\newcommand{\footnoterecall}[1]{
  \footnotemark[\value{#1}]
}
 \newtheorem{thm}{Theorem}[section]
 \newtheorem{cor}[thm]{Corollary}
 \newtheorem{lem}[thm]{Lemma}
 \newtheorem{prop}[thm]{Proposition}
 \newtheorem{conj}[thm]{Conjecture}
 \theoremstyle{definition}
 \newtheorem{defn}[thm]{Definition}
 \theoremstyle{remark}
 \newtheorem{rem}[thm]{Remark}
 \numberwithin{equation}{section}
\newcommand{\kommentar}[1]{}
\let \nc \newcommand
\let \rnc \renewcommand
\nc {\R}{{\bf R}} \nc {\C}{{\bf C}} \nc {\Z}{{\bf Z}}\nc {\Q}{{\bf
Q}}
\nc {\bd}{\begin{description}} \nc {\ed}{\end{description}} \nc {\bi}{\begin{itemize}} \nc {\ei}{\end{itemize}}
\nc {\be}{\begin{enumerate}} \nc {\ee}{\end{enumerate}} \nc {\bdm}{\begin{displaymath}} \nc
\nc {\bea}{\begin{eqnarray*}} \nc {\eea}{\end{eqnarray*}} \nc {\baa}{\begin{alignat*}}
\nc {\eaa}{\end{alignat*}} \nc {\bsp}{\begin{split}} \nc {\esp}{\end{split}} \nc {\beq}{\begin{equation}} \nc
\nc {\btab}{\begin{tabular}} \nc {\etab}{\end{tabular}} \nc {\ba}{\begin{array}} \nc
\newcommand{\cA}{{\mathcal A}}
\newcommand{\cN}{{\mathcal N}}
\newcommand{\cM}{{\mathcal M}}
\newcommand{\cG}{{\mathcal G}}
\newcommand{\cO}{{\mathcal O}}
\newcommand{\cL}{{\mathcal L}}
\newcommand{\cH}{{\mathcal H}}
\newcommand{\cB}{{\mathcal B}}
\newcommand{\fg}{\EuFrak{g}}
\newcommand{\ft}{\EuFrak{t}}
\newcommand{\fsu}{\EuFrak{su}}
\newcommand{\fsl}{\EuFrak{sl}}
\newcommand{\fu}{\EuFrak{u}}
\def\co{\colon\thinspace}
\nc{\LOT}[1]{L^2(\Omega^{0+1+2}(T;#1))} \nc{\LO}[2]{L^2(\Omega^{0+1+2}(#1;#2))} \nc{\HT}[1]{{H^{0+1+2}(T;#1)}}
\nc{\cHT}[1]{{\cH^{0+1+2}(T;#1)}} \nc{\stor}{D^2\times S^1}
\nc{\Om}{\Omega} \nc{\dist}{{\rm dist}}
\nc{\homeo}{\approx} \nc{\im}{\operatorname{\rm
im}}\nc{\diag}{\operatorname{\rm diag}} \nc{\Crit}{\operatorname{\rm
Crit}}\nc{\grad}{\operatorname{\rm grad}}
\nc{\Hess}{\operatorname{\rm Hess}}\nc{\Ad}{\operatorname{\rm Ad}}
\rnc{\O}{\operatorname{\rm O}}\nc{\U}{\operatorname{\rm
U}}\nc{\SO}{\operatorname{\rm SO}} \nc{\SU}{\operatorname{\rm SU}}\nc{\Spin}{\operatorname{\rm Spin}}
\nc{\Cl}{\operatorname{\rm Cl}} \nc{\PD}{\operatorname{\rm PD}}
\nc{\ad}{\operatorname{\rm ad}} \nc{\vol}{\operatorname{\rm vol}}
\newcommand{\rk}{\operatorname{\rm rk}}\nc{\hol}{\operatorname{\rm
hol}} \nc{\re}{\operatorname{\rm Re}} \nc{\Id}{\operatorname{\rm
Id}} \nc{\Mas}{\operatorname{\rm Mas}} \nc{\Stab}{\operatorname{\rm
Stab}} \nc{\SF}{\operatorname{\rm SF}} \rnc{\ker}{\operatorname{\rm
ker}} \nc{\tr}{\operatorname{\rm tr}} \nc{\sign}{\operatorname{\rm
sign}} \nc{\eq}{\operatorname{\rm
eq}} \nc{\alg}{\operatorname{\rm
alg}}\nc{\topo}{\operatorname{\rm
top}} \nc{\Spec}{\operatorname{\rm
Spec}}\nc{\coker}{\operatorname{\rm coker}}
\rnc{\hom}{\operatorname{\rm Hom}} \nc{\ch}{\operatorname{\rm ch}}
\nc{\End}{\operatorname{\rm End}} \nc{\Aut}{\operatorname{\rm Aut}}
\nc{\lat}{(\frac{1}{2}\Z)^2} \nc{\hz}{\tfrac{1}{2}\Z}
\nc{\la}{\langle} \nc{\ra}{\rangle} \nc {\Ra}{\Rightarrow} \nc
\nc {\lla}{\longleftarrow} \nc
\nc {\equ}{\Leftrightarrow} \nc
\nc {\lra}{\longrightarrow} \nc
\nc {\lmt}{\longmapsto} \nc
\nc {\tensor}{\otimes} \nc {\Rt}{\widetilde\R}
\nc {\contract}{\lrcorner} \nc{\sgn}{\operatorname{\rm sgn}}
\newcommand{\kom}[1]{}
\nc{\CS}{\operatorname{\rm CS}} \nc{\Ch}{\operatorname{\rm Ch}} \nc{\Td}{\operatorname{\rm
Td}}\nc{\Rank}{\operatorname{\rm Rank}} \nc{\GL}{\operatorname{\rm GL}} \setcounter{tocdepth}{1}
\nc{\proj}{\operatorname{\rm proj}}
\begin{document}

\title{The Witten-Reshetikhin-Turaev invariants of \\ finite order mapping tori II}

\author{J{\o}rgen Ellegaard Andersen\footnoteremember{Thanks}{Supported in part by the center of excellence grant "Center for quantum geometry of Moduli Spaces" from the Danish National Research Foundation.} \and Benjamin Himpel\footnoterecall{Thanks}}




\date{}




\maketitle

\begin{abstract}We identify the leading order term of the asymptotic expansion of the Witten-Reshetikhin-
Turaev invariants for finite order mapping tori with classical invariants for all simple and simply-connected
compact Lie groups. The square root of the Reidemeister torsion is used as a density
on the moduli space of flat connections and the leading order term is identified with the integral over this moduli space of this density weighted by a certain phase for each component of the moduli space. We also
identify this phase in terms of classical invariants such as Chern-Simons invariants, eta invariants, spectral flow and the rho invariant. As a result, we show agreement with the semiclassical
approximation as predicted by the method of stationary phase.\end{abstract}

\section{Introduction}

The Witten-Reshetikhin-Turaev quantum invariants were first proposed
by Witten in his seminal paper \cite{witten89}, where he studied the
Chern-Simons quantum field theory for a simple, simply connected
compact Lie group $G$. He did so using path integral techniques,
which let him to propose a combinatorial surgery formula for the
invariants.

Shortly thereafter Reshetikhin and Turaev gave a rigorous
construction of these quantum invariants using the representation
theory of quantum groups.  In fact, they subsequently constructed
the whole topological quantum field theory (TQFT) $Z^{(k)}_G$ in
\cite{reshetikhin-turaev90, reshetikhin-turaev91,
turaev2010_QuantumInvariants} for $G=\SU(2)$. The other classical
groups were treated in \cite{turaev-wenzl1993_QuantumInvariants,
turaev-wenzl1997_LinkInvariants}. The TQFT for $G=\SU(2)$ were also
constructed using skein theory by Blanchet, Habegger, Masbaum and
Vogel in \cite{blanchet-habegger-masbaum-vogel92,
blanchet-habegger-masbaum-vogel95}. Since then these constructions
have been extended to other Lie groups $G$ through the effort of
many people. For a complete list we refer to the references in
\cite{turaev2010_QuantumInvariants}.

Witten also analyzed the Chern-Simons path integral from a
perturbative point of view. The identification of the leading order
asymptotics of the invariants in terms of classical topological
invariants in the case of an isolated, irreducible flat connection,
was proposed by Witten in \cite{witten89}. There has been subsequent
proposals for refinements and generalizations to this, for example
by Freed and Gompf \cite[Equation (1.3)]{freed-gompf91} and by
Jeffrey \cite[Equation (5.1)]{jeffrey92}, partially supported by
computations of the quantum invariants, as well as solely from path
integral techniques Axelrod, Lawrence, Mari{\~ n}o, Rozansky,
Singer, Zagier \cite{axelrod-singer92, axelrod-singer94, rozansky95,
rozansky1996_ResidueFormulasSeifertManifolds,
lawrence-rozansky1999_WRTinvariantsSeifert,
lawrence-zagier1999_QuantumInvariants}.

Both the perturbative expansion conjecture \cite[Conjecture 7.6]{andersen2002:TheAsymptoticExpansionConjecture} and the asymptotic expansion conjecture \cite[Conjecture 7.7]{andersen2002:TheAsymptoticExpansionConjecture} address the asymptotic behavior of the quantum invariants which we expect from the perturbative point of view. The first conjecture attempts to give a detailed description of the asymptotic expansion in terms of an integral of certain classical topological invariants over the moduli space of flat connections and Feynman diagrams, which come from stationary phase approximation and the perturbative expansion respectively. Let us refer to the part of this conjecture which is concerned with the leading order term as the {\em the semiclassical approximation conjecture}. Since the statement of the perturbative expansion conjecture requires some interpretation, the conjecture has been reduced to the mathematically precise asymptotic expansion conjecture \cite[Conjecture 7.7]{andersen2002:TheAsymptoticExpansionConjecture}. This work is part of a series of papers analyzing the asymptotic behavior of the quantum invariants in the case of finite order mapping tori. While the first part \cite{andersen95} focuses solely on the asymptotic expansion conjecture, we establish the semiclassical approximation conjecture by starting from the results in \cite{andersen95}.

The asymptotic expansion of the Witten-Reshetikhin-Turaev quantum invariants has been studied by a number of authors, and various results have been obtained for
certain classes of closed three manifolds \cite{freed-gompf91,garoufalidis92_Thesis,jeffrey92,rozansky95,lawrence-rozansky1999_WRTinvariantsSeifert, lawrence-zagier1999_QuantumInvariants,rozansky1996_ResidueFormulasSeifertManifolds,
marino2005:CSTheoryPert,hansen2001,hansen2005,hansen-takata2002,hansen-takata2004,beasley-witten2005:NonAbelianLocalization, charles-marche2011:KnotAsymptoticsI,charles-marche2011:KnotAsymptoticsII,charles2011:TorusKnotAsymptotics,charles2010:AsymptoticsMappingTorus,marche-paul2011:ToeplitzSkein}. An overview can be found in the introduction of \cite{andersen95}. Let us mention the ones, which take the extra step of expressing the terms in the asymptotic expansion of the quantum invariants geometrically. Freed and Gompf \cite{freed-gompf91} considered lens spaces and certain Brieskorn spheres, and they used computer calculations to confirm the semiclassical approximation conjecture. In a subsequent paper, Lisa Jeffrey \cite{jeffrey92} formally proved this conjecture for lens spaces as well as mapping tori of genus one surfaces with restrictions either on the choice of monodromy map or the structure group. A few missing details about the spectral flow contribution are formulated as \cite[Conjecture 5.8]{jeffrey92}, which has later been partially confirmed \cite{kirk-klassen94,himpel2005}. Garoufalidis did work similar to \cite{jeffrey92} in his thesis \cite{garoufalidis92_Thesis}. In particular, he found formulas for the quantum invariants for certain Seifert manifolds and rewrote them so it was obvious that they satisfy the asymptotic expansion conjecture. Rozansky \cite{rozansky95} studied the
asymptotic expansion of the $\SU(2)$ quantum invariants for Seifert manifolds with non-zero orbifold Euler characteristic. In particular, he confirms the perturbative expansion conjecture up to the 2-loop contributions. Rozansky \cite{rozansky1996_ResidueFormulasSeifertManifolds} also studied general Seifert manifolds in the case $\SU(2)$ and expresses the contributions to the asymptotic expansion of the quantum invariants in terms of intersection pairings on the moduli space, but it is unclear to what extent his calculations are rigorous. Nevertheless, his formulas bear a formal resemblance to the ones in \cite{andersen95}. Beasley and Witten \cite{beasley-witten2005:NonAbelianLocalization} considered the path integral formula for these quantum invariants for Seifert manifolds with non-zero orbifold Euler characteristic \cite[Equation (3.20)]{beasley-witten2005:NonAbelianLocalization}. Since they are working with path integrals, their work is per se not rigorous, however they provide path integral arguments for the fact that the
perturbation expansion of these invariants are finite (modulo the framing correction
term). For a mathematical proof of that result please consult the paper \cite{andersen95}. Recently, Charles and March\'e \cite{charles2011:TorusKnotAsymptotics,charles-marche2011:KnotAsymptoticsII} proved the semiclassical approximation conjecture for Dehn fillings of torus knots and the figure eight knot for $\SU(2)$.

In contrast, our results are for finite order mapping tori of surfaces of genus greater than one, which are Seifert manifolds with vanishing orbifold Euler characteristic. Therefore our family of mapping tori is disjoint from the families considered by Jeffrey and Rozansky. Furthermore, we would like to point out that their approach relies on explicit computations of the quantum invariants and the Poisson resummation trick, while we identify the emerging spectral invariants on a more conceptual level based on geometric quantization. Note that it has recently been confirmed in a series of papers \cite{andersen-ueno2007, andersen-ueno2007b, andersen-ueno2006, andersen-ueno2011_TQFT-Construction}, that the gauge theory construction of the quantum invariants for $G=\SU(n)$ coincides with the combinatorial or equivalently skein theory construction.

Since we have explicit combinatorial expressions  for the quantum
invariants, it is sensible to extract the perturbation expansion
from these exact formulas. To this end we need an ansatz for the
kind of asymptotic expansion we can expect based on Witten's path
integral formula for the invariants. This leads us to the asymptotic
expansion conjecture \cite[Conjecture 7.7]{andersen2002:TheAsymptoticExpansionConjecture},
\cite{andersen95} and \cite[Conjecture 1]{andersen-hansen2006}.
\begin{conj}[Asymptotic expansion conjecture]\label{ConjAsymptoticExpansion} Let $X$ be a closed 3--manifold. There exist constants
(depending on X) $d_j \in \frac12 \Z$ and $b_j \in \C$ for
$j=0,\ldots, n$ and $a^l_j \in \C$ for $j=0,1,\ldots,n$, $l
=1,2,\ldots$ such that the asymptotic expansion of $Z_G^{(k)}(X)$ in
the limit $k\to\infty$ is given by
\begin{equation}\label{AsymptoticExpansion}
Z_G^{(k)}(X) \sim \sum_{j=0}^n e^{2\pi i k q_j}
k^{d_j}b_j\left(1+\sum_{j=1}^\infty a_j^lk^{-l/2}\right),
\end{equation}
where $q_0=0, q_1,\ldots,q_n$ are finitely many different values of
the Chern-Simons functional on the space of flat $G$--connections on
$X$.
\end{conj}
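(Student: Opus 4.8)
The plan is to make Witten's path-integral heuristic rigorous starting from the combinatorial definition of $Z_G^{(k)}(X)$. First I would fix a surgery presentation of $X$ as Dehn surgery on a framed link $L\subset S^3$, so that $Z_G^{(k)}(X)$ becomes a normalized finite sum, over colorings of the components of $L$ by the dominant weights in the level-$k$ Weyl alcove, of the colored quantum link invariant of $(S^3,L)$ associated to $G$, evaluated at the root of unity $q=\exp(2\pi i/(k+h^\vee))$ with the appropriate framing twists. That invariant is a Laurent polynomial in $q^{\pm 1/2}$ with explicit quantum-integer coefficients, so after the substitution it is a ratio of products of sines whose arguments are linear in the colors and in $k$; the plan is to extract the $k\to\infty$ asymptotics of this explicit expression.

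Second, I would rescale the colors by $k$ and view the sum over the alcove as a Riemann-type sum over the fixed fundamental polytope. Applying Poisson resummation, equivalently Euler--Maclaurin, converts it into a sum of oscillatory integrals over the polytope: the zero Fourier mode gives a ``main'' integral while the higher modes contribute rapidly oscillating pieces that are either absorbed into the asymptotic series or are exponentially small. This is the step that manufactures the $k^{-l/2}$ expansion and, crucially, exhibits the genuine large parameter $k$ multiplying a smooth phase on the polytope built from logarithms of the sine factors plus the quadratic framing term.

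Third, I would apply stationary phase to each integral. By Chern--Simons reciprocity for surgery, the phase is a function whose critical points correspond to gauge equivalence classes of flat $G$--connections on $X$ and whose critical values are exactly the Chern--Simons invariants; this produces the finite set $q_0=0,q_1,\dots,q_n$. At a nondegenerate critical point stationary phase yields a term $e^{2\pi i k q_j}k^{-\dim G/2}$ times a Gaussian factor times $(1+O(k^{-1/2}))$; summing Gaussian factors over a component of the moduli space one reads off $d_j\in\tfrac12\Z$ from (half) its dimension and the normal Hessian signature, and $b_j\in\C$ from the resulting finite-dimensional integral, into which the square root of the Reidemeister torsion and spectral-flow and $\eta$-type phases enter, while the higher terms of the stationary-phase expansion supply the $a_j^l$.

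The main obstacle is that none of these steps is automatic at this level of generality. The Poisson/Euler--Maclaurin step requires uniform control of the colored link invariant as the colors and $k$ jointly tend to infinity, which is understood only for restricted classes of links, and the stationary-phase step degenerates whenever the moduli space of flat connections is singular or contains reducible connections with positive-dimensional stabilizer, so that the critical set of the phase is not Morse--Bott --- precisely the situation in which the conjecture is genuinely hard and is at present open in general. The realistic route, and the one taken here, is therefore to restrict to a class of three-manifolds for which an exact tractable formula for $Z_G^{(k)}(X)$ is available: for finite order mapping tori such a formula is provided by \cite{andersen95}, and on that input the three steps above can be carried out rigorously, with the degeneracies of the moduli space handled through the geometric-quantization description of the space of flat connections rather than by naive stationary phase.
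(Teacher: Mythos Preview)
The statement is a \emph{conjecture}, and the paper does not prove it in general; it only establishes it for finite order mapping tori (Theorem~\ref{ThmWRTInvariant}, quoted from \cite{andersen95}). You correctly note that the conjecture is open in general and that the surgery/Poisson-resummation/stationary-phase pipeline you describe cannot currently be carried through for arbitrary $X$. So far so good.

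Where your proposal diverges from the paper is in the description of ``the realistic route, and the one taken here.'' The proof for finite order mapping tori does \emph{not} proceed via a surgery presentation, colored link invariants, Riemann sums over the Weyl alcove, or Poisson resummation. Instead, the key input from \cite{andersen95} is that for a finite order mapping torus $\Sigma_f$ the invariant $Z_G^{(k)}(\Sigma_f)$ is (up to the framing correction) the trace of the induced action of $f$ on the Verlinde space $H^0(\cM_\sigma,\cL^k)$, where $\cM_\sigma$ is the moduli space of semistable $G^{\C}$-bundles on $\Sigma$ viewed as a projective variety. That trace is then evaluated \emph{exactly} by the Lefschetz--Riemann--Roch theorem of Baum, Fulton, MacPherson and Quart for singular varieties (reviewed in Appendix~\ref{LRR}), yielding the closed formula \eqref{WRTInvariant}: a finite sum over components $c$ of the fixed-point variety, each summand being $e^{2\pi i k\,\CS_{\Sigma_f}(c)}$ times a polynomial in $k$ coming from $\exp(k\omega_c)\cap \tau_\bullet(L^c_\bullet(\cO_{\cM(\Sigma)}))$. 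This is already of the shape \eqref{AsymptoticExpansion}, with the $q_j$ the Chern--Simons values on the components, $d_j$ the (complex) dimensions $d_c$, and the $b_j$ and $a_j^l$ read off from the Lefschetz--Riemann--Roch integrand; no asymptotic analysis is needed at this stage because the formula is exact.

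Your outline is, by contrast, the Jeffrey/Rozansky approach that works for lens spaces and torus-bundle mapping tori, and the paper explicitly points out that its method is conceptually different: ``their approach relies on explicit computations of the quantum invariants and the Poisson resummation trick, while we identify the emerging spectral invariants on a more conceptual level based on geometric quantization.'' So the gap in your proposal is not a mathematical error but a misidentification of the method: replace the surgery/Poisson step by ``$Z_G^{(k)}(\Sigma_f)=\det(f)^{-\zeta/2}\,\tr\bigl(f^*\colon H^0(\cM_\sigma,\cL^k)\to H^0(\cM_\sigma,\cL^k)\bigr)$ and apply Lefschetz--Riemann--Roch,'' and you recover exactly Theorem~\ref{ThmWRTInvariant}.
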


Here {\bf $\sim$} denotes {\em asymptotic equivalence} in the
Poincar\'{e} sense, which means the following: Let
\[d = \max\{d_0,\ldots,d_n\}.\]
Then for any non-negative integer $L$, there is a $c_L \in \R$
such that
\[\left| Z^{(k)}_G(X) - \sum_{j=0}^n e^{2\pi i k q_j} k^{d_j}
b_j \left( 1 + \sum_{l=0}^L a_j^l k^{-l/2}\right) \right| \leq c_L
k^{d-(L+1)/2}\] for all levels $k$. Of course such a condition only
puts limits on the large $k$ behavior of $Z^{(k)}_G(X)$.

Through the previous definition we can make the following definition of the {\em leading order term} of the asymptotics.

\begin{defn} If $Z_G^{(k)}(X)$ satisfies Conjecture
\ref{ConjAsymptoticExpansion}, then we write
\[
Z_G^{(k)}(X) \mbox{ } \dot\sim \mbox{ } \sum_{j=0}^n e^{2\pi i k q_j} k^{d_j}b_j
\]
and we call the sum on the right {\em the leading order term} of
(the asymptotic expansion of) $Z_G^{(k)}(X)$.
\end{defn}

It is this leading order term for which there conjecturally is a classical topological expression. In fact, let $\cM(X)$ be the moduli space flat $G$-connections on $X$ and let us write the component decomposition as
$$\cM(X) = \bigcup_{c\in C_X} \cM(X)_c.$$
One expects that a square root of the Reidemeister torsion produces
a measure on $\cM(X)$ \cite{jeffrey92, rozansky95, jeffrey-mclellan2011:NonabelianLocalization,mclellan2010:NonabelianLocalization}. Combining results from
\cite{axelrod-singer92, axelrod-singer94,
rozansky95,rozansky95_LoopExpansion,
rozansky1996_ResidueFormulasSeifertManifolds,axelrod97,
lawrence-rozansky1999_WRTinvariantsSeifert,
lawrence-zagier1999_QuantumInvariants} and the references therein we
arrive at the following conjectured formula for the leading order
term.

\begin{conj}[Semiclassical approximation conjecture] The leading order term of $Z_G^{(k)}(X)$ with respect to the Atiyah
2--framing \cite{atiyah1990:Framings} is given by
\begin{equation}\label{stationaryphaseSF}
\begin{split}
Z_{G}^{(k)}(X)\mbox{ } \dot\sim \mbox{ }\sum_{c\in C_X} \frac{1}{|Z(G)|}e^{ \pi i\dim
G(1+b^1(X))/4} \int_{A\in \cM(X)_{c}} &\sqrt{\tau_X(A)}
e^{2\pi i \CS_X(A)(k+h)}\\& e^{2\pi i
\left(\SF(\theta,A)/4-(\dim(H^0(X,d_A))+\dim(H^1(X,d_A)))/8\right)}k^{d_c}
\end{split}
\end{equation}
and
$$d_c =  \frac{1}{2} \max_{A \in\cM(X)_{c}}
            \left( \dim(H^1(X,d_A)) - \dim(H^0(X,d_A)) \right),
$$
where $\max$ here means the maximum value $\dim(H^1(X,d_A)) -
\dim(H^0(X,d_A)) $ attained on a Zariski open subset of $
\cM(X)_{c}$.
\end{conj}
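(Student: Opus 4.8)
Since the statement presupposes the asymptotic expansion of Conjecture~\ref{ConjAsymptoticExpansion}, I would proceed in two stages: first produce that expansion by a stationary-phase analysis of a finite-dimensional oscillatory integral extracted from the combinatorial definition of $Z_G^{(k)}(X)$, and then match the leading-order data with the classical invariants in \eqref{stationaryphaseSF}. Concretely, I would start from a Heegaard splitting $X = H_g \cup_\Sigma H_g'$ (a surgery presentation would also do), so that $Z_G^{(k)}(X) = \langle Z_G^{(k)}(H_g'),\, Z_G^{(k)}(H_g)\rangle$ in the TQFT space $Z_G^{(k)}(\Sigma)$. In the geometric-quantization model of that space --- holomorphic sections of $\mathcal{L}^{\otimes k}$ over the moduli space $\cM(\Sigma)$ of flat $G$-connections on $\Sigma$, with $\mathcal{L}$ the Chern--Simons line bundle --- the handlebody vectors are, to leading order, WKB states concentrated on the Lagrangian subvarieties $L, L' \subset \cM(\Sigma)$ of boundary restrictions of flat connections on $H_g, H_g'$. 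Their Hermitian pairing then becomes an oscillatory integral over $\cM(\Sigma)$ whose phase is the relative Chern--Simons functional and whose stationary set is exactly $L \cap L' = \cM(X)$, by van Kampen.

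Next I would run the method of stationary phase along each component $\cM(X)_c$, which in general is a positive-dimensional, singular real-algebraic set. On the Zariski-open stratum on which $\dim H^1(X,d_A) - \dim H^0(X,d_A)$ attains its maximum the transverse Hessian of the phase is nondegenerate; using the Mayer--Vietoris sequence in $d_A$-twisted cohomology for $X = H_g \cup_\Sigma H_g'$ together with Milnor's gluing formula for Reidemeister torsion, I would identify its determinant with $\tau_X(A)$, so that the density $\sqrt{\tau_X(A)}$ emerges, while the integral over the critical manifold produces the factor $k^{d_c}$ with $d_c = \tfrac12\max_A(\dim H^1(X,d_A) - \dim H^0(X,d_A))$. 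The $-\dim H^0$ and the prefactor $1/|Z(G)|$ arise from dividing by the volume of the residual stabilizer of $A$ (its center contributing the $|Z(G)|$).

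It then remains to pin down the phases. The factor $e^{2\pi i k\,\CS_X(A)}$ is the value of the Chern--Simons action at the critical point; the renormalization $k \mapsto k+h$ is the shift by the dual Coxeter number carried by the ribbon element of the quantum group at the root of unity; the metaplectic correction in geometric quantization accounts for the half-integer shifts $d_j \in \tfrac12\Z$; and the remaining eighth root of unity I would compute as a Maslov-type index of the pair of handlebody Lagrangians and convert, through the Atiyah--Patodi--Singer index theorem for the odd signature operator twisted by $A$, into $\SF(\theta,A)/4 - (\dim H^0(X,d_A)+\dim H^1(X,d_A))/8$ (equivalently, into $\eta$- and $\rho$-invariant data). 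The overall $e^{\pi i\dim G(1+b^1(X))/4}$ is the signature anomaly attached to the Atiyah $2$-framing, obtained from the Wall non-additivity correction for the splitting. These identifications follow the pattern already carried out for mapping tori in \cite{andersen95}.

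The main obstacle will be making the stationary-phase step rigorous for a general $X$: the critical set $\cM(X)$ typically contains reducible points, singular strata and loci of varying local structure, and the handlebody vectors are only \emph{asymptotically} Lagrangian, so one needs uniform remainder estimates near the bad strata together with a resolution argument showing those strata do not affect the leading order; equivalently, one must control the off-diagonal part of the Hermitian pairing on $Z_G^{(k)}(\Sigma)$. This is precisely the analytic input that becomes tractable for finite-order mapping tori --- where $\cM(\Sigma)$ is a smooth symplectic quotient, the monodromy acts by a holomorphic isometry, and the relevant operators are genuinely diagonalizable --- which is why \cite{andersen95} can establish the needed asymptotics in that case; absent such structure one can, short of the full conjecture, prove \eqref{stationaryphaseSF} on the top stratum under a transversality assumption on the Heegaard Lagrangians.
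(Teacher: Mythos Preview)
The statement you are trying to prove is a \emph{conjecture}; the paper does not prove it in general. What the paper does is (i) give a non-rigorous motivation for \eqref{stationaryphaseSF} in Appendix~\ref{Heuristics} by formally applying stationary phase to Witten's path integral, and (ii) rigorously establish the conjecture only in the special case $X=\Sigma_f$ a finite order mapping torus (Theorems~\ref{ThmIdentification} and~\ref{ThmStationarySF}), under the hypothesis that every component of the fixed point variety contains irreducibles. So there is no ``paper's own proof'' of the general statement to compare against, and your proposal cannot be a proof of it either.

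That said, your outline is a genuine strategy and it is instructive to contrast it with what the paper actually does in the mapping torus case. You propose a Heegaard model: realize $Z_G^{(k)}(X)$ as a pairing of two Lagrangian states in $H^0(\cM(\Sigma),\cL^k)$, run stationary phase along $L\cap L'\cong\cM(X)$, and read off torsion and spectral flow from Mayer--Vietoris and APS. The paper takes a completely different route for $\Sigma_f$: it writes $Z_G^{(k)}(\Sigma_f)$ as the trace of $f^*$ on $H^0(\cM_\sigma,\cL^k)$, applies the Lefschetz--Riemann--Roch theorem of Baum--Fulton--MacPherson--Quart to express this trace as a sum over components of the fixed point variety $|\cM(\Sigma)|$ (Theorem~\ref{ThmWRTInvariant}), then computes the Reidemeister torsion of $\Sigma_f$ directly via the Wang exact sequence (Section~\ref{ReidemeisterTorsionMappingTori}) and the $\rho$--invariant via Bohn's formula (Section~\ref{RhoInvariant}), and finally matches terms. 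No Heegaard splitting, no Lagrangian intersection, no Maslov index appears; the fixed point localization on $\cM(\Sigma)$ replaces your stationary phase on a Heegaard pairing.

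Your own last paragraph correctly identifies the genuine obstruction to your approach: for general $X$ the handlebody vectors are not known to be WKB states with the required uniform remainder control, $\cM(X)$ is singular with reducible strata, and there is no rigorous stationary-phase theorem available in that setting. These are not technicalities but the heart of the open problem. Even your fallback claim---that one can prove \eqref{stationaryphaseSF} ``on the top stratum under a transversality assumption on the Heegaard Lagrangians''---would require results on the semiclassical behaviour of handlebody states that are not in the literature. In short: the proposal is a reasonable heuristic program, but it is not a proof, and the paper neither claims nor attempts one for general $X$.
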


Note that the exact solution of the path integral depends on a framing of twice the tangent bundle as a $\Spin(6)$ bundle \cite[Equations (2.24) and (2.25)]{witten89}. The dependence on the 2--framing explained in the skein theoretic definition in \cite{blanchet-habegger-masbaum-vogel95} and in more general setting of quantum invariants for general modular categories inTuraev's exposition \cite{turaev2010_QuantumInvariants}. See also the discussion on this point by Freed and Gompf in \cite{freed-gompf91}.

In Appendix \ref{Heuristics} we review the heuristics by which the method of
stationary phase applies to the Chern-Simons path integral and
produces this conjecture. Note, that Appendix \ref{Heuristics} contains the only
non-rigorous part of this paper, which we decided to keep for
motivational purposes. In the conjectured formula
\eqref{stationaryphaseSF} we see expressions for the constants $b_j$
and $d_j$ in terms of Reidemeister torsion, spectral flow and
dimensions of twisted cohomology groups.

In this paper we consider the Witten-Reshetikhin-Turaev quantum
invariants of finite order mapping tori. Let $\Sigma$ be a closed oriented surface. Then
the mapping torus $\Sigma_f$ of a diffeomorphism $f\co
\Sigma \to \Sigma$ is defined to be
\[
\Sigma_f = \Sigma \times I / (x,1) \sim (f(x),0)
\]
with the orientation given by the product orientation with the
standard orientation on the interval $I=[0,1]$. Let $\cM(\Sigma)$ be the moduli space of flat $G$ connections on $\Sigma$. This is a stratified symplectic space on which the mapping class group acts. We assume that $f$
is of finite order, and denote by
$|\cM(\Sigma)|\subset \cM(\Sigma)$ the fixed point set of $f^*$. Denote by $C$ an indexing set
for the set of all connected components $\{|\cM(\Sigma)|_c\}_{c\in
C}$ of $|\cM(\Sigma)|$. There is a map $r\co \cM(\Sigma_f) \to
|\cM(\Sigma)|$ given by restricting a flat connection on $\Sigma_f$
to $\Sigma \times \{0\}$. We will write $\cM(\Sigma_f)_c =
r^{-1}(|\cM(\Sigma)|_c)$. Let the prime superscript denote the part
which is irreducible in $\cM(\Sigma)$: $\cM(\Sigma)'\subset
\cM(\Sigma)$ denotes the irreducible subset, while $\cM(\Sigma_f)'_c
= r^{-1}(|\cM(\Sigma)|'_c)$.

Choose a complex structure $\sigma$ on $\Sigma$, which is fixed by $f$. Consider the moduli space $\cM_\sigma$ of semi-stable $G^\C$
bundles over $\Sigma_\sigma$. We identify $\cM_\sigma$ and $\cM(\Sigma)$ as stratified symplectic spaces, but $\cM_\sigma$ has the additional structure of a normal projective variety. We write
\[|{\mathcal M}_\sigma| = \bigcup_{c\in C} |{\mathcal M}_\sigma|_c\] for the component decomposition of the fixed point set of $f$.
Following the notation of \cite{baum-fulton-macpherson1979_Riemann-Roch} and \cite{baum-fulton-quart_LefschetzRiemann-Roch} we denote the Grothendieck group of all equivariant coherent sheaves on ${\mathcal M}_\sigma$ by
$K^{\eq}_0({\mathcal M}_\sigma)$ and the Grothendieck group of all coherent sheaves on $|{\mathcal M}_\sigma|_c$ by $K^{\alg}_0(|{\mathcal M}_\sigma|_c)$. Let
\[L_\bullet^c \co K^{\eq}_0({\mathcal M}_\sigma) \rightarrow K^{\alg}_0(|{\mathcal
M}_\sigma|_c)\otimes {\C}\]
be the localizing homomorphism defined in \cite[\S 2]{baum-fulton-quart_LefschetzRiemann-Roch}, and
\[\tau_\bullet \co K^{\alg}_0(|{\mathcal M}_\sigma|_c) \rightarrow H_\bullet(|{\mathcal
M}_\sigma|_c)\]
the homomorphism defined in the theorem on page 180 in \cite{baum-fulton-macpherson1979_Riemann-Roch}. The Lefschetz-Riemann-Roch
formula of Baum, Fulton, McPherson and Quart then states that
\[\tr(f \co H^0({\mathcal M}_\sigma, {\mathcal L}^k) \rightarrow
H^0({\mathcal M}_\sigma, {\mathcal L}^k)) =
\sum_{c\in C} a_c^k \ch({\mathcal L}^k|_{|{\mathcal M}_\sigma|_c}) \cap
\tau_\bullet(L_\bullet^c({\mathcal O}_{{\mathcal M}_\sigma}))\]
where $a_c$ is the
complex number by which $f$ acts on ${\mathcal L}|_{|{\mathcal M}_\sigma|_c}.$ For the convenience of the reader we review the Lefschetz-Riemann-Roch theorem for singular varieties in Appendix \ref{LRR}. From \cite[Theorem 2.19]{freed95} we see that $f$ acts on $\cL_{[A]}$ by multiplication with $\exp(2\pi i \CS_{\Sigma_f}(A))$ and that $\CS_{\Sigma_f}(A) \mod \Z$ is constant for $A\in \cM(\Sigma_f)_c
$. If we write $ \exp(2\pi i \CS_{\Sigma_f}(c)) =\exp(2\pi i \CS_{\Sigma_f}(A))$ for $A \in
\cM(\Sigma_f)_c$, we get
\[ a_c = \exp(2\pi i \CS_{\Sigma_f}(c)).\]
Clearly
\[\ch({\mathcal L}^k|_{|{\mathcal M}_\sigma|_c})
= \exp(k\omega_c),\] where $\omega_c$ is the
the restriction of $c_1({\mathcal L})$ to $|{\mathcal M}_\sigma|_c$. In summary, we get the following theorem of
\cite{andersen95}, which proves the asymptotic expansion conjecture for finite order mapping tori.

\begin{thm}[{\cite[Theorem 8.2]{andersen95}}]\label{ThmWRTInvariant} The Witten-Reshetikhin-Turaev invariants of $\Sigma_f$ are given by
\begin{equation}\label{WRTInvariant}
Z^{(k)}_G({\Sigma_f}) = \det(f)^{-\frac{1}{2}\zeta} \sum_{c\in C}
\exp(2\pi i k \CS_{\Sigma_f}(c)) \exp(k\omega_c) \cap \tau_\bullet
(L^c_\bullet(\cO_{\cM(\Sigma)})),
\end{equation}
where
\[
\zeta = \frac{k \dim G}{k+h}
\]
is the central charge of the theory, $h$ is the dual Coxeter number
of $G$, $\det(f)^{-\frac{1}{2}\zeta} $ is the framing correction defined in Section \ref{LeadingTerms}.
\end{thm}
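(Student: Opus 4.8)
The plan is to evaluate the invariant through the gluing axioms of the Reshetikhin-Turaev TQFT $Z^{(k)}_G$ and then identify the resulting trace using the Lefschetz-Riemann-Roch computation already assembled above. Since $\Sigma_f$ is obtained from the cylinder $\Sigma\times I$ by identifying its two boundary components via $f$, the gluing formula gives
\[
Z^{(k)}_G(\Sigma_f)=\tr\bigl(Z^{(k)}_G(f)\co V\to V\bigr),
\]
where $V$ is the vector space the TQFT assigns to $\Sigma$ and $Z^{(k)}_G(f)$ is the mapping class group representation evaluated on $f$. Because $Z^{(k)}_G$ is only a \emph{projective} functor, this identity holds only up to a scalar, and pinning down that scalar is where I expect the main obstacle to lie: one must compare the 2--framing of $\Sigma_f$ coming from the product structure on $\Sigma\times I$ with the Atiyah 2--framing and show that the discrepancy contributes precisely the factor $\det(f)^{-\frac12\zeta}$, with $\zeta=k\dim G/(k+h)$ the central charge. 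I would do this by tracking the framing anomaly through the gluing, along the lines of Section \ref{LeadingTerms}.

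Next I would make the trace geometric. Using the identification of the TQFT vector space with the geometric quantization of $\cM(\Sigma)$ (Hitchin and Axelrod-Della Pietra-Witten for the projectively flat connection over Teichm\"uller space, together with the identification of the gauge-theoretic and combinatorial constructions), and crucially using that $f$ fixes the complex structure $\sigma$, I would identify $V$ equivariantly with $H^0(\cM_\sigma,\cL^k)$, where $\cL$ is the ample determinant (Chern-Simons) line bundle on the normal projective variety $\cM_\sigma$ and $f$ acts by the natural pullback on holomorphic sections. To replace the trace on $H^0$ by an equivariant holomorphic Euler characteristic I would invoke the vanishing $H^i(\cM_\sigma,\cL^k)=0$ for $i>0$ and $k>0$, which follows from a Kodaira-type vanishing theorem using that $\cM_\sigma$ has rational singularities.

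Finally, the Lefschetz-Riemann-Roch theorem for singular varieties of Baum, Fulton, MacPherson and Quart (reviewed in Appendix \ref{LRR}), applied to the automorphism $f$ of $\cM_\sigma$ acting on $\cL^k$, expresses $\tr\bigl(f\co H^0(\cM_\sigma,\cL^k)\to H^0(\cM_\sigma,\cL^k)\bigr)$ as $\sum_{c\in C} a_c^k\,\ch(\cL^k|_{|\cM_\sigma|_c})\cap\tau_\bullet\bigl(L^c_\bullet(\cO_{\cM_\sigma})\bigr)$. Substituting the evaluations $a_c=\exp(2\pi i\CS_{\Sigma_f}(c))$ (from Freed's computation of the $f$--action on $\cL|_{|\cM_\sigma|_c}$, using that $\CS_{\Sigma_f}(A)\bmod\Z$ is constant on each component) and $\ch(\cL^k|_{|\cM_\sigma|_c})=\exp(k\omega_c)$ with $\omega_c=c_1(\cL)|_{|\cM_\sigma|_c}$, both recorded above, and then multiplying by the framing correction $\det(f)^{-\frac12\zeta}$ from the first step, yields formula \eqref{WRTInvariant}.
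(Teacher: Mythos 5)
Your proposal follows essentially the same route as the paper: the result is quoted from \cite{andersen95}, and the proof summarized in the paper's introduction is exactly your strategy---realize $Z^{(k)}_G(\Sigma_f)$, up to the framing correction $\det(f)^{-\frac12\zeta}$ coming from the lift of $f$ to the rigged mapping class group determined by the Atiyah 2--framing, as the trace of $f$ acting on $H^0(\cM_\sigma,\cL^k)$ via the gauge-theoretic/geometric quantization model, and then apply the Baum--Fulton--MacPherson--Quart Lefschetz--Riemann--Roch formula with $a_c=\exp(2\pi i\CS_{\Sigma_f}(c))$ (Freed's computation) and $\ch(\cL^k|_{|\cM_\sigma|_c})=\exp(k\omega_c)$. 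So your argument is correct in outline and matches the paper's (cited) proof.
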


Note that the restriction of $c_1(\cL)$ to the smooth part of the moduli space $\cM_\sigma$ can be represented by the K\"ahler form on $\cM_\sigma$. The evaluation of the top power of the class $c_1(\cL^k)$ on  $\tau_\bullet (L^c_\bullet(\cO_{\cM(\Sigma)}))$ is just the integration of this top form over the smooth part of $|\cM(\Sigma)|_c$, when this component has the property that it has an open dense part of irreducibles. This follows from the lemma on page 129 of \cite{baum-fulton-macpherson1975:RiemannRoch} and part (6) of the Riemann-Roch Theorem of \cite{fulton-gillet1983:RiemannRoch} (see also Appendix B below). Starting with Theorem \ref{ThmWRTInvariant}, our main result is the following theorem, which
applies to all finite order elements $f$ of the mapping class group
of $\Sigma$.

\begin{thm}\label{ThmIdentification} For each $c\in C$ such that $\cM(\Sigma_f)'_c$ is
nonempty we have
\begin{equation}\begin{split}\label{EqnIrreducibleContribution}
k^{-d_c} \det(f)^{-\frac{1}{2}\zeta} e^{2\pi i k \CS_{\Sigma_f}(c)}
&\frac{1}{d_c!} (\exp(k\omega_c) \cap
\tau_\bullet(L_\bullet^c(\cO_{\cM(\Sigma)})))\\
&=\frac{1}{|Z(G)|} \int_{A\in\cM(\Sigma_f)'_c} e^{2\pi i k
\CS_{\Sigma_f}(A)} \sqrt{\tau_{\Sigma_f}(A)}  e^{2\pi
i\frac{\rho_A(\Sigma_f)}{8}} + O(\frac{1}{k}),
\end{split}\end{equation}
where $\rho_A(\Sigma_f)$ is the classical $rho$-invariant.
\end{thm}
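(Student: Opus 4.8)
The plan is to analyze the left-hand side of \eqref{EqnIrreducibleContribution} by unwinding the Lefschetz-Riemann-Roch machinery on the component $|\cM_\sigma|_c$ and matching the resulting geometric data term-by-term with the stationary-phase integral on the right. First I would argue, as in the remark following Theorem \ref{ThmWRTInvariant}, that since $\cM(\Sigma_f)'_c$ is nonempty the component $|\cM_\sigma|_c$ has an open dense set of irreducibles, so the top-degree part of $\tau_\bullet(L_\bullet^c(\cO_{\cM(\Sigma)}))$ is a genuine fundamental-class contribution and the evaluation
\[
\frac{1}{d_c!}\big(\exp(k\omega_c)\cap \tau_\bullet(L_\bullet^c(\cO_{\cM(\Sigma)}))\big)
\]
is, to leading order in $k$, equal to $k^{d_c}$ times $\int_{|\cM_\sigma|_c} e^{\omega_c}\wedge (\text{local Lefschetz density})$, where $2d_c = \dim_{\C}|\cM_\sigma|_c$. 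Here I would invoke the lemma on page 129 of \cite{baum-fulton-macpherson1975:RiemannRoch} together with part (6) of the Riemann-Roch theorem of \cite{fulton-gillet1983:RiemannRoch} to reduce $\tau_\bullet(L_\bullet^c(\cO_{\cM(\Sigma)}))$ on the smooth locus to the classical equivariant localization contribution, i.e.\ the Todd class of the fixed-point component divided by the equivariant Euler class (the $\lambda_{-1}$ of the conormal bundle) of $f^*$ acting on the normal bundle $N_c$ of $|\cM_\sigma|_c$ in $\cM_\sigma$. The factors $\det(f)^{-\frac12\zeta}$ and $e^{2\pi i k\CS_{\Sigma_f}(c)}$ on the left are then absorbed: the Chern-Simons phase becomes $e^{2\pi i k\CS_{\Sigma_f}(A)}$ pointwise under $r$ (using $\CS_{\Sigma_f}(A)\bmod\Z$ constant on the component and \cite[Theorem 2.19]{freed95}), and the framing correction contributes the $e^{\pi i\dim G(1+b^1)/4}$-type prefactor once combined with the $\zeta\to\dim G$ limit.

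The core of the argument is the identification of the equivariant density $\operatorname{Td}/\lambda_{-1}(N_c^\ast\otimes\C)$ with $\sqrt{\tau_{\Sigma_f}(A)}\,e^{2\pi i\rho_A(\Sigma_f)/8}$, and here I would appeal to the geometric-quantization description of the moduli space. On the smooth irreducible locus, $\cM_\sigma$ carries the determinant line bundle $\cL$ with curvature $\omega$, and the Reidemeister torsion $\tau_{\Sigma_f}(A)$ of the mapping torus is, by the Fried/Cheeger-Müller-type gluing formula, expressible as a ratio of determinants of $f^\ast$ acting on the twisted cohomology of $\Sigma$, which on $|\cM_\sigma|_c$ is precisely the tangent/normal splitting of $T\cM_\sigma$. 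Thus $\sqrt{\tau_{\Sigma_f}(A)}$ is naturally the square root of the Ray-Singer norm of the appropriate determinant line, which matches the modulus of the equivariant Euler class factor. The remaining phase is then forced: the argument of $\operatorname{Td}(|\cM_\sigma|_c)/\lambda_{-1}$, expressed via the eta-invariant/spectral-flow description of the Chern-Simons line (as in the relation between the phase of the quantum invariant and $\SF(\theta,A)$, cf.\ \cite{kirk-klassen94,himpel2005,jeffrey92}), equals $e^{2\pi i\rho_A(\Sigma_f)/8}$ by the Atiyah-Patodi-Singer signature theorem applied to a 4-manifold bounding $\Sigma_f$, which converts the spectral-flow/eta combination appearing in the semiclassical conjecture \eqref{stationaryphaseSF} into the classical rho-invariant. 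I would make this precise by using that $\rho_A(\Sigma_f) = \eta_A(\Sigma_f) - \eta_\theta(\Sigma_f)$ and relating both $\eta$-invariants to the Chern-Simons phases of $A$ and the trivial connection via APS, with the difference $\dim H^0 + \dim H^1$ appearing as the APS defect term $h$ in the index formula.

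Finally I would collect the error terms: the non-top-degree pieces of $\exp(k\omega_c)\cap\tau_\bullet(L_\bullet^c(\cO_{\cM(\Sigma)}))$ contribute $O(k^{d_c-1})$ (this is where $d_c$ being the \emph{generic} dimension, attained on a Zariski-open subset, matters — the lower strata and the locus where $\dim H^1 - \dim H^0$ jumps contribute strictly lower order), and replacing the localized Lefschetz density by its pointwise value over $\cM(\Sigma_f)'_c$ introduces only an $O(1/k)$ relative error after dividing by $k^{d_c}$. The main obstacle I anticipate is the precise matching of \emph{phases} rather than moduli: one must check that the sign conventions and normalizations in the Baum-Fulton-Quart Todd class, in the Ray-Singer torsion as a density (square root vs.\ full torsion, and the $\dim G/|Z(G)|$ normalization), and in the APS rho-invariant all line up so that no spurious $4$th or $8$th root of unity survives; controlling this will require carefully tracking the $e^{\pi i\dim G(1+b^1(\Sigma_f))/4}$ prefactor against $b^1(\Sigma_f)$ computed from the finite-order monodromy, and reconciling the $(\dim H^0 + \dim H^1)/8$ term in \eqref{stationaryphaseSF} with the APS $\eta$-defect. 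I expect the torsion-gluing identity for finite-order mapping tori (identifying $\tau_{\Sigma_f}(A)$ with the relevant equivariant determinant on $|\cM_\sigma|_c$) and this phase bookkeeping to together constitute the bulk of the work.
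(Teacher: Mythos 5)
There is a genuine gap, in two places. First, your torsion step leans on a ``Fried/Cheeger--M\"uller-type gluing formula'' to express $\tau_{\Sigma_f}(A)$ as a ratio of determinants of $f^*$ on the twisted cohomology of $\Sigma$. The formulas of this type available in the literature (e.g.\ \cite{fried83}) apply only to the acyclic case, whereas here $H^1(\Sigma_f,d_A)$ is nonzero on every positive-dimensional component, and the torsion is a \emph{density} on $H^1(\Sigma_f,d_A)$ rather than a number. The paper has to prove a new non-acyclic mapping-torus torsion formula from scratch (via the Wang exact sequence and the Multiplicativity Lemma, Propositions \ref{torsionGeneral} and \ref{torsionIso}), and then, using the specific normalization $\PD(a)(b)=-\omega(a,b)$ and the symplectic-volume identity of Lemma \ref{omegaProperty}, show the precise density identity $\sqrt{\tau_{\Sigma_f}(A)}=\tfrac{1}{d_c!}\,r^*(\omega_c)^{d_c}_A/|\det(1-df|_{\cN^*_{r(A)}})|$ (Theorem \ref{ReidemeisterTorsionComputation}). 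Your proposal acknowledges the normalization issue but offers no mechanism to establish this identity; without it the modulus matching on the right-hand side of \eqref{EqnIrreducibleContribution} is not proved.

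Second, and more seriously, your phase identification has no concrete engine. The phase on the Lefschetz--Riemann--Roch side is the argument of $\det(1-df|_{\cN^*_{[a]}})^{-1}$ together with the framing correction $\det(f)^{-\frac12\zeta}$, i.e.\ finite-dimensional eigenvalue data of $f$ on the normal bundle to $|\cM(\Sigma)|_c$ and on $H^{1,0}(\Sigma,\bar\partial)$. To equate this with $e^{2\pi i\rho_A(\Sigma_f)/8}$ you need an \emph{explicit} formula for $\eta(D_A)$ and $\eta(D_\theta)$ of the finite-order mapping torus in terms of exactly these eigenvalues; this is the content of Bohn's theorem (equation \eqref{FiniteOrderRho} and Remark \ref{RemarkEta}), which is the essential input you are missing. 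The Atiyah--Patodi--Singer route you propose (bounding 4-manifold, $\rho=\eta_A-\eta_\theta$, spectral flow and the $\dim H^0+\dim H^1$ defect) only converts the rho invariant into Chern--Simons and spectral-flow terms --- that is what the paper's Theorem \ref{SFRhoCS} does, and it is used to pass from Theorem \ref{ThmStationaryRho} to Theorem \ref{ThmStationarySF}, not to prove Theorem \ref{ThmIdentification}; it does not by itself produce the eigenvalue formula needed to match $\arg\det(1-df|_{\cN^*})$, and the spectral flow on these mapping tori is exactly as hard to compute as the eta invariant. Relatedly, in this theorem the framing correction is not absorbed into an $e^{\pi i\dim G(1+b^1)/4}$ prefactor (that factor belongs to the spectral-flow reformulation); it must be identified with $e^{-\frac{\pi i}{4}\eta(D_\theta)}$ via Proposition \ref{prop1} and Bohn's formula so that it combines with $e^{\frac{\pi i}{4}\eta(D_A)}$ to give the rho invariant, and the $1/|Z(G)|$ arises from the $|Z(G)|$-sheeted covering $r\co\cM(\Sigma_f)'_c\to|\cM(\Sigma)|'_c$, a point your write-up leaves implicit.
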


In Section \ref{SpectralFlow} we give a proof of a well-known
formula relating the spectral flow, the $\rho$--invariant and the
Chern-Simons invariant for an arbitrary Lie group $G$, since the
only proof we have found in the literature is for $\SU(2)$ (see
\cite[Section 7]{kirk-klassen-ruberman94}). The precise
relation---stated in Theorem \ref{SFRhoCS}---shows in particular,
that Theorem \ref{ThmIdentification} has an equivalent formulation
in terms of spectral flow, which has the following theorem as an
immediate consequence, once combined with Theorem \ref{ThmWRTInvariant}.

\begin{thm} \label{ThmStationarySF} If $\cM(\Sigma)'_c$ is nonempty for every $c\in C$, then the above conjecture for the
leading order term is correct, i.e.
\begin{equation}\label{EqnStationarySF}
\begin{split}
Z^{(k)}_{G}(\Sigma_f) \mbox{ } \dot\sim \mbox{ }\sum_{c\in C} \frac{1}{|Z(G)|}e^{ \pi i\dim
G(1+b^1(\Sigma_f))/4} \int_{A\in \cM(\Sigma_f)_{c}'} &\sqrt{\tau_{\Sigma_f}(A)}
e^{2\pi i \CS_{Ã}(A)(k+h)}\\& e^{2\pi i
\left(\SF(\theta,A)/4-(\dim(H^0(\Sigma_f,d_A))+\dim(H^1(\Sigma_f,d_A)))/8\right)}k^{d_c}
\end{split}
\end{equation}
and
$$d_c =  \frac{1}{2} \max_{A \in\cM(\Sigma_f)_{c}}
            \left( \dim(H^1(\Sigma_f,d_A)) - \dim(H^0(\Sigma_f,d_A)) \right),
$$
where $\max$ here means the maximum value $\dim(H^1(\Sigma_f,d_A)) -
\dim(H^0(\Sigma_f,d_A)) $ attained on a Zariski open subset of $
\cM(\Sigma_f)_{c}$.
\end{thm}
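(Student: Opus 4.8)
The plan is to read off the leading order term directly from the exact formula of Theorem \ref{ThmWRTInvariant}, using Theorem \ref{ThmIdentification} for the large--$k$ asymptotics of each of its summands and Theorem \ref{SFRhoCS} to convert the $\rho$--invariant phase into the spectral flow phase.

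First I would check that the hypothesis makes Theorem \ref{ThmIdentification} available for \emph{every} $c\in C$. The restriction map $r\co\cM(\Sigma_f)\to|\cM(\Sigma)|$ is surjective, since a flat $G$--connection on $\Sigma$ whose gauge equivalence class is fixed by $f^*$ always extends over the mapping torus; hence $\cM(\Sigma)'_c\neq\emptyset$ forces $\cM(\Sigma_f)'_c=r^{-1}(|\cM(\Sigma)|'_c)\neq\emptyset$, so Theorem \ref{ThmIdentification} applies to the $c$--th summand of \eqref{WRTInvariant} for all $c$. Under this hypothesis every component $|\cM(\Sigma)|_c$ also carries a Zariski--dense open set of irreducibles, so the discussion following Theorem \ref{ThmWRTInvariant} applies: the growth in $k$ of $\exp(k\omega_c)\cap\tau_\bullet(L^c_\bullet(\cO_{\cM(\Sigma)}))$ is exactly $k^{d_c}$, with leading coefficient an integral over the smooth irreducible locus of the component. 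I would also record here that the two descriptions of $d_c$ agree: the Wang exact sequence of the mapping torus gives $\dim H^1(\Sigma_f,d_A)-\dim H^0(\Sigma_f,d_A)=\dim H^1(\Sigma,d_A)^{f^*}$, which is the real dimension of $|\cM(\Sigma)|_c$ near $[A]$, so $\tfrac12\max_A\bigl(\dim H^1(\Sigma_f,d_A)-\dim H^0(\Sigma_f,d_A)\bigr)$ is precisely the complex dimension $d_c$ governing that growth.

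Next I would substitute. Theorem \ref{ThmWRTInvariant} already guarantees (via \cite{andersen95}) that $Z^{(k)}_G(\Sigma_f)$ admits the asymptotic expansion of Conjecture \ref{ConjAsymptoticExpansion}, so it suffices to identify the leading coefficients. Writing $Z^{(k)}_G(\Sigma_f)$ as the finite sum over $c\in C$ of the summands in \eqref{WRTInvariant}, Theorem \ref{ThmIdentification} expresses the large--$k$ behaviour of the $c$--th summand, up to an error of strictly lower order in $k$, as $k^{d_c}$ times the integral $\tfrac{1}{|Z(G)|}\int_{A\in\cM(\Sigma_f)'_c}e^{2\pi i k\CS_{\Sigma_f}(A)}\sqrt{\tau_{\Sigma_f}(A)}\,e^{2\pi i\rho_A(\Sigma_f)/8}$ appearing in \eqref{EqnIrreducibleContribution}. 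Because $\CS_{\Sigma_f}(A)\bmod\Z$ is constant on $\cM(\Sigma_f)_c$, the factor $e^{2\pi i k\CS_{\Sigma_f}(A)}$ comes out of this integral as the purely $k$--dependent phase $e^{2\pi i k\CS_{\Sigma_f}(c)}$; summing over $c$ then exhibits the leading order term of $Z^{(k)}_G(\Sigma_f)$ in the $\rho$--invariant form.

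Finally I would invoke Theorem \ref{SFRhoCS}, which expresses $\rho_A(\Sigma_f)$ modulo $8$ as a combination of $2\,\SF(\theta,A)$, a Chern-Simons term accounting for the level shift $k\mapsto k+h$, the term $\dim G\,(1+b^1(\Sigma_f))$, and $-\bigl(\dim H^0(\Sigma_f,d_A)+\dim H^1(\Sigma_f,d_A)\bigr)$. Substituting turns $e^{2\pi i\rho_A(\Sigma_f)/8}$ into $e^{\pi i\dim G(1+b^1(\Sigma_f))/4}\,e^{2\pi i h\CS_{\Sigma_f}(A)}\,e^{2\pi i\left(\SF(\theta,A)/4-(\dim H^0(\Sigma_f,d_A)+\dim H^1(\Sigma_f,d_A))/8\right)}$; the factor $e^{2\pi i h\CS_{\Sigma_f}(A)}$ merges with $e^{2\pi i k\CS_{\Sigma_f}(A)}$ into $e^{2\pi i\CS_{\Sigma_f}(A)(k+h)}$, the constant $e^{\pi i\dim G(1+b^1(\Sigma_f))/4}$ comes out of the integral, and the remaining $A$--dependent factors stay under the integral sign, which is exactly formula \eqref{EqnStationarySF}. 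The delicate point, and the step on which I expect to spend the most care, is this last bookkeeping of constants and phases: one must verify that the framing correction $\det(f)^{-\frac12\zeta}$ --- already folded into the left--hand side of \eqref{EqnIrreducibleContribution} --- together with Theorem \ref{SFRhoCS} produces precisely the shift $k\mapsto k+h$, the phase $e^{\pi i\dim G(1+b^1(\Sigma_f))/4}$, and the overall normalization by $|Z(G)|$, after which the remainder of the argument is formal.
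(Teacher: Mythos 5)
Your proposal is correct and follows essentially the same route as the paper: the result is obtained there by combining the exact formula of Theorem \ref{ThmWRTInvariant} with the identification of each summand's leading term in Theorem \ref{ThmIdentification} (equivalently Theorem \ref{ThmStationaryRho}) and then converting the $\rho$--invariant phase via Theorem \ref{SFRhoCS}, exactly as you do, including the merging of $e^{2\pi i h\CS}$ with $e^{2\pi i k\CS}$ into the level $k+h$ and the extraction of the constant $e^{\pi i\dim G(1+b^1(\Sigma_f))/4}$. Your auxiliary remarks (nonemptiness of $\cM(\Sigma_f)'_c$ via the $|Z(G)|$--sheeted covering $r$, and the matching of the two descriptions of $d_c$ through the Wang sequence) are consistent with what the paper records in Sections \ref{CS&MS} and \ref{ReidemeisterTorsionMappingTori}.
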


Recall that $\cM(\Sigma_f)_c'$ consists of the irreducible representation whose restriction to $\Sigma$ is irreducible. By Theorem \ref{ThmValidity} the hypothesis of Theorem \ref{ThmStationarySF} is satisfied in the case $G = \SU(n)$ and $g(\Sigma/\langle f\rangle)>1$. Note that unlike for lens spaces, the stationary phase approximation is in general not exact: for example, for $f = \Id$ lower order terms in the asymptotic expansion do not in general vanish, as one easily sees, since the Todd class of the moduli spaces are in general none trivial.

This paper is organized as follows. Section \ref{CS&MS} contains a preliminary discussion about the Chern-Simons functional and the moduli spaces of flat connections. In Section \ref{LeadingTerms} we
express the leading order term of the Witten-Reshetikhin-Turaev
invariants for each $c\in C$ as certain integrals of differential
geometric data. In Section \ref{ReidemeisterTorsion} and
\ref{ReidemeisterTorsionMappingTori} we review Reidemeister torsion
and compute it for mapping tori. In Section \ref{RhoInvariant} we
review the $\rho$--invariant and an essential result for finite
order mapping tori by Bohn \cite{bohn2009}. Section
\ref{identification} combines the main results from Sections
\ref{LeadingTerms} and \ref{ReidemeisterTorsionMappingTori} to
identify the classical invariants in the leading order term of the
$Z_G^{(k)}(X)$ in the limit $k\to \infty$. Section
\ref{SpectralFlow} gives an equivalent formulation of this
identification in Section \ref{LeadingTerms} in terms of spectral
flow. In Appendix \ref{Heuristics} we present the heuristics which lead to the
conjectured identification of the leading order term with classical
topological invariants. In Appendix \ref{LRR} we review the Lefschetz-Riemann-Roch theorem for singular varieties.

The results of this paper relies on the results of \cite{andersen95}, which were obtained by using the gauge theory approach to the Witten-Reshetikhin-Turaev TQFT. The first named author has obtained other results about this TQFT using the gauge theory approach, such as the asymptotic faithfulness of the quantum representations \cite{andersen2006_Asymptoticfaithfulness} and the determination of the Nielsen-Thurston classification via these same representations \cite{andersen2008_NielsenThurstonClassification} (see also \cite{andersen-masbaum-ueno2006}). He has further related these quantum representations to deformation quantization of moduli spaces both in the abelian and in the non-abelian case, please see \cite{andersen2005}, \cite{andersen-gammelgaard2010} and \cite{andersen2011_HitchinsConnection}. The second named author has answered some open questions by Jeffrey \cite{jeffrey92} about this TQFT for torus-bundles over $S^1$ by using cut-and-paste methods to perform spectral flow computations \cite{himpel2005}.

We would like to thank Henning Haahr Andersen, Hans Boden, Jens Carsten Jantzen, Johan Dupont, and Nicolai Reshetikhin for helpful discussions.

\section{The Chern-Simons invariant and moduli spaces of flat connections}
\label{CS&MS}
In this section we give some necessary definitions and make some remarks
 regarding normalizations before we consider the moduli space and recall its decomposition into connected components.

\subsection*{Normalizations for the Chern-Simons functional and Poincare duality}

Let $\la \cdot ,\cdot \ra$ be a multiple of the Killing form on the
Lie Algebra $\fg$ of a simple and simply-connected compact Lie group
$G$ normalized so that $-\frac{1}{6}\la \theta \wedge
[\theta\wedge\theta]\ra$ is a minimal integral generator of
$H^3(G,\R)$, where $\theta$ is the Maurer-Cartan form. A connection
on a principal $G$--bundle $P$ is a $G$--equivariant, horizontal Lie
algebra valued 1--form on $P$. The group of gauge transformations
$\cG$ consists of all bundle automorphism $P \to P$, which acts on
connections by pull-back. Let $X$ be an oriented, closed
$3$--manifold. Since $G$ is simply-connected, every principal
$G$--bundle over $X$ is trivializable; therefore let us fix a
trivialization to simplify notation, which allows us to identify the
affine space of connections with Lie algebra valued 1-forms $\cA_X =
\Omega^1(X;\fg)$. Furthermore, the moduli space of flat
$G$--connections on $X$, denoted by $\cM(X)$, can be identified with
the moduli space of flat connections in the trivial $G$--bundle. The
Chern-Simons invariant is the map $\cA_X \to \R$ given by
\begin{equation}\label{CS}
\CS_X(A) = \int_X \la A \wedge dA + \frac{1}{3} A \wedge [A\wedge
A]\ra.
\end{equation}
It is not difficult to see that---with our choice of normalization
for the inner product on $\fg$---$\CS_X$ factors through $\cG$ as an
$\R/\Z$--valued map. It is also not difficult to see, that the map
$\cG \to \Z$ given by $\Phi \mapsto \CS_X(\Phi^*A)- \CS_X(A)$ is
onto.

Let $\Sigma$ be a closed oriented surface and consider the space of
connections $\cA_\Sigma$ in a trivial principal $G$--bundle over
$\Sigma$. The symplectic structure on $\cA_\Sigma$ is naturally
given by
\begin{equation}\label{SymplecticForm}
\omega(a,b) = - 2\int_\Sigma \la a\wedge b\ra.
\end{equation}
This gives a (stratified) symplectic structure on the moduli space
$\cM(\Sigma)$ of flat $G$--connections on $\Sigma$.

In order to view the square root of Reidemeister torsion as a
density, we need to identify $H^2(\Sigma_f,d_A)$ with
$(H^1(\Sigma_f,d_A))^*$ using Poincar\'e duality, which depends on a choice of inner product on $\fg$. For $\fg$--valued differential forms $a$ and $b$ we set
\[\PD(a)(b) = \int_{\Sigma_f} 2 \la a\wedge b\ra,\]
which descends to the Poincar\'e duality isomorphism $H^k(\Sigma_f,d_A) \to (H^{3-k}(\Sigma_f,d_A))^*$. Note that the factor $2$ might seem unnatural, but as we mention in Appendix \ref{Heuristics}, there is a choice involved, and the correct choice is the one which satisfies $\PD(a)(b) = -\omega(a,b)$.

\subsection*{Connected components of $\cM(\Sigma_f)$ and $|\cM(\Sigma)|$}

Recall that $r\co \cM(\Sigma_f)'_c \to |\cM(\Sigma)|'_c$ is a
$|Z(G)|$--sheeted covering map (see \cite[Section 7]{andersen95}). We
get a complete description of the leading order term of the
Witten-Reshetikhin-Turaev invariants in terms of a sum of integrals
over the components $\cM(\Sigma_f)'_c$ of $\cM'(\Sigma_f)$, if every $|\cM(\Sigma)|_c$ contains an irreducible representation.  The connected components of $\cM(\Sigma)$ have been studied by Goldman \cite{goldman1988:ComponentsReps}. The components of the fixed point set $|\cM(\Sigma)|$ of $f$ are analyzed in \cite[Section 6]{andersen95}. In this section we will see, in which situations all $\cM(\Sigma_f)'_c$ are nonempty.

For a chosen diffeomorphism $f\co \Sigma \to \Sigma$ of order $m$
consider the projection $\pi\co \Sigma \to \tilde\Sigma$ to the
quotient surface $\tilde\Sigma \coloneqq \Sigma/\la f\ra$. $\Sigma$ is an
$m$--fold branched cover over $\tilde\Sigma$ with branch points
$\tilde p_1,\ldots,\tilde p_n$, for which \[\pi^{-1}(\tilde p_i)=
\{p_i , f(p_i),\ldots, f^{m_i-1}(p_i)\}\quad \text{with} \quad m_i<m.\] Choose small
disjoint closed discs $D_i$ around each $p_i$, $i=1,\ldots,n$, such
that $f^j(D_i)$, $j=0,\ldots,m_i$, $i=0,\ldots,n$, are disjoint. Let
$\Sigma'$ be the complement of the interior of all these discs and
$\tilde\Sigma'\coloneqq \Sigma'/\la f\ra$.

\begin{figure}[ht]
\begin{center}
\includegraphics{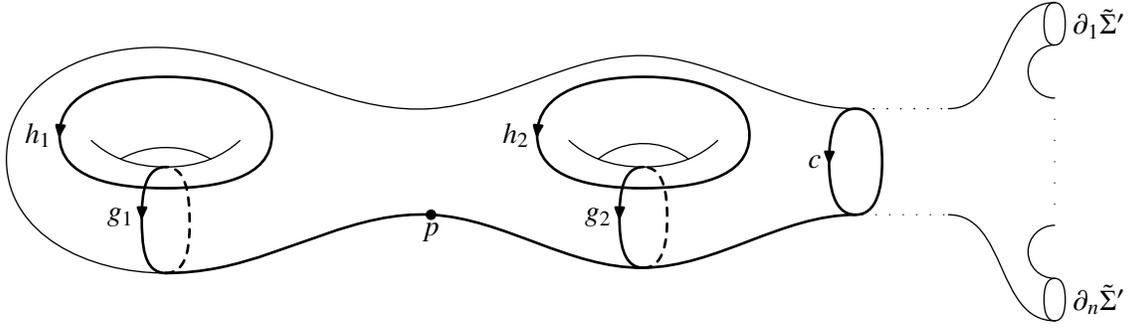}
\caption{Images of the representation $\rho\co \pi_1(\tilde\Sigma')\to G$}\label{FigSurface1}
\end{center}
\end{figure}

The indexing set for the connected components of $|\cM'(\Sigma)|$ is
shown in \cite[Proposition 6.1]{andersen95} to be
\[
\Delta \coloneqq \{ (z,c_1,\ldots,c_n) \in Z(G) \times \Cl^n \mid
z\in c_i^{l_i}\}/Z(G),
\]
where $\Cl$ is the set of conjugacy classes of $G$ and $l_i
\coloneqq \frac{m}{m_i}$. We have a surjective map $\Delta \to C$ if $|\cM(\Sigma)| = \overline{|\cM'(\Sigma)|}$. By \cite[Proposition 6.3]{andersen95} this is the case for $G=\SU(n)$. For $c(\delta)\coloneqq
(c_1^{-k_1},\ldots,c_n^{-k_n})$ let $\cM(\tilde\Sigma',c(\delta))$ be
the moduli space of flat $G$--connections on $\tilde\Sigma'$ with
holonomy around $\partial_i\tilde \Sigma'$ in $c_i^{k_i}$,
$i=1,\ldots,n$. By \cite[Theorem 6.1]{andersen95}, a component $|\cM'(\Sigma)|_\delta$ can be
described as the space $\cM'' (\tilde\Sigma',c(\delta))/Z_\delta$,
where $\cM''(\tilde\Sigma',c(\delta))$ consists  of the flat
$G$--connections in $\cM'(\tilde\Sigma',c(\delta))$, which remain
irreducible when pulled back via $\pi$.

\begin{prop}\label{PropIrreduciblesExist} Let $G$ be a connected compact Lie group, $\tilde \Sigma'$
a genus two surface with one boundary circle $\partial_i \tilde\Sigma'$,
and $\pi\co \Sigma' \to \tilde\Sigma'$ a covering map. Then
$\cM''(\tilde\Sigma',c)$ is nonempty for every $c\in \Cl$.
\end{prop}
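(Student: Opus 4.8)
The plan is to construct, for an arbitrary conjugacy class $c \in \Cl$, a flat connection on $\tilde\Sigma'$ with holonomy in the prescribed class $c_i^{k_i}$ around the single boundary circle, whose pullback under $\pi$ to $\Sigma'$ remains irreducible. Since $\tilde\Sigma'$ is a genus-two surface with one boundary component, its fundamental group is free on four generators $a_1,b_1,a_2,b_2$, and a flat $G$-connection corresponds to a homomorphism $\rho\co \pi_1(\tilde\Sigma') \to G$; the boundary holonomy is the product of commutators $[\rho(a_1),\rho(b_1)][\rho(a_2),\rho(b_2)]$. First I would recall that because $\pi_1(\tilde\Sigma')$ is free, there is no relation to satisfy: any assignment of the four generators to elements of $G$ defines a representation, and the boundary holonomy is then whatever the product of commutators turns out to be. So the task reduces to choosing $g_1,h_1,g_2,h_2 \in G$ so that $[g_1,h_1][g_2,h_2]$ lies in the desired conjugacy class \emph{and} the resulting representation of $\pi_1(\Sigma')$ (obtained by restricting along the index-$m$ inclusion $\pi_\#$) has centralizer equal to $Z(G)$.

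The key steps I would carry out are: (1) Reduce the boundary-holonomy condition to a statement about commutators. Since $G$ is connected and compact (hence $[G,G]$ is all of $G$ when $G$ is semisimple, and in general the commutator map is surjective onto the identity component modulo the relevant obstruction — but here we only need: for a given target element $t$, one can write $t$ as a single commutator $[g_1,h_1]$ times another single commutator $[g_2,h_2]$ with a great deal of freedom), produce a one- or two-parameter family of solutions $(g_1,h_1,g_2,h_2)$ all giving boundary holonomy in the fixed class $c$. Concretely, fix $[g_2,h_2]$ to realize the required class (using that every element of a compact connected group is a product of commutators, and with genus two we have two commutators to play with, which is ample), and then use $g_1,h_1$ as free parameters. (2) Among this family, show that a generic choice gives an irreducible representation of $\pi_1(\Sigma')$. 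The point is that the pullback representation contains the images $\rho(g_1),\rho(h_1)$ (up to the covering bookkeeping), and for generic $g_1,h_1 \in G$ the subgroup they generate is already dense in $G$ (this is a standard fact: the set of pairs generating a dense subgroup of a compact connected semisimple Lie group is a dense open, in fact its complement is a proper subvariety). A dense subgroup has centralizer $Z(G)$, giving irreducibility — and irreducibility of the restriction to $\Sigma'$ follows since $\pi_\#(\pi_1(\Sigma'))$ has finite index and contains the relevant generators. (3) Conclude $\cM''(\tilde\Sigma',c) \neq \emptyset$, since we have produced a point of it.

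The main obstacle I anticipate is step (2): controlling irreducibility of the \emph{pullback} under $\pi$, not just of $\rho$ itself. A representation of $\pi_1(\tilde\Sigma')$ can be irreducible while its restriction to the finite-index subgroup $\pi_\#(\pi_1(\Sigma'))$ is reducible (this is exactly the subtlety encoded in the definition of $\cM''$ versus $\cM'$). The way around it is to note that if $\rho(g_1)$ and $\rho(h_1)$ already generate a dense subgroup of $G$, then so does any finite-index subgroup of $\langle \rho(g_1),\rho(h_1)\rangle$ intersected appropriately — more carefully, one arranges that suitable \emph{powers} or products lying in $\pi_\#(\pi_1(\Sigma'))$ still generate densely, which holds generically because the condition "these finitely many words generate a dense subgroup" is again a non-empty Zariski-open condition on $(g_1,h_1)$. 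I would also need to check the case where $G$ is not semisimple (the proposition says merely "connected compact"), where "dense subgroup" must be replaced by "subgroup with centralizer $Z(G)$"; here one works factor-by-factor on $G \cong (T \times G_{ss})/F$, handling the torus directions by an explicit irrationality choice and the semisimple part as above. Once these genericity statements are in place, the existence follows immediately from non-emptiness of a finite intersection of dense opens.
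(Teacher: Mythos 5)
Your overall strategy is the same as the paper's: write $\pi_1(\tilde\Sigma')$ as the free group on $x_1,y_1,x_2,y_2$, use topological $2$--generation of $G$ (Auerbach) together with Got\^o's commutator theorem to hit the prescribed boundary class, and then prove irreducibility of the pullback by showing its stabilizer is contained in the centralizer of a dense subgroup, hence equals $Z(G)$. Where you diverge is exactly at the one delicate point, namely that irreducibility is needed for $\rho=\pi^*\tilde\rho$ and not just for $\tilde\rho$. The paper resolves this with a completely explicit trick: it picks an Auerbach pair $g_1',h_1'$ with $\overline{\la g_1',h_1'\ra}=G$ and then sets $g_1,h_1$ to be $m$--th \emph{roots} of $g_1',h_1'$; since $x_1^m,y_1^m\in\im(\pi\co\pi_1(\Sigma')\to\pi_1(\tilde\Sigma'))$, the image of the pullback contains $g_1',h_1'$ and the centralizer argument applies verbatim, with no genericity needed. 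Your first suggested remedy---that the image of the finite-index subgroup still generates densely---can be made into an equally clean proof and is arguably slicker: take $g_1,h_1$ to be the Auerbach pair itself, note that $K:=\la x_1,y_1\ra\cap\im(\pi)$ has index at most $m$ in $\la x_1,y_1\ra$, so $\tilde\rho(K)$ has finite index in $\la g_1,h_1\ra$; a finite-index subgroup of a dense subgroup of a \emph{connected} compact group is dense (its closure is closed of finite index, hence open, hence all of $G$), so the pullback's image has centralizer $Z(G)$. If you state it this way, no roots and no genericity are required, and you recover the paper's conclusion by a genuinely different (and self-contained) mechanism.

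Two points in your write-up need repair, though. First, the fallback justification ``these finitely many words generate a dense subgroup is a non-empty Zariski-open condition on $(g_1,h_1)$'' is not sound as stated: dense generation is not an algebraic condition, and---more importantly---non-emptiness of that set is precisely the content of the proposition, so invoking it is circular unless you supply either the root trick or the finite-index argument above. Second, the ordering in your step (1) is backwards: you cannot fix $[g_2,h_2]$ in the class $c$ first and then let $g_1,h_1$ roam freely, since the boundary holonomy is $[g_1,h_1][g_2,h_2]$; the correct order (and the paper's) is to choose $g_1,h_1$ first and then use Got\^o to find $g_2,h_2$ with $[g_2,h_2]\in[g_1,h_1]^{-1}c$. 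Finally, the worry about non-semisimple $G$ is real but affects the surjectivity of the commutator map rather than the irreducibility step; in the paper's application $G$ is simple and simply connected, so this is immaterial there.
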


\begin{proof} Write \[ \pi_1(\tilde\Sigma') = \la
x_1,y_1,x_2,y_2\ra,
\] then the moduli space $\cM(\tilde\Sigma',c)$,
$c\in \Cl$, consists of all conjugacy classes of $\rho$ satisfying
\[
\rho([x_1,y_1][x_2,y_2]) \in c.
\]
By Auerbach's Generation Theorem \cite[Theorem
6.82]{hofmann-morris2006_CompactGroups}, we have $G = \overline{\la
g_1',h_1' \ra}$ for some $g_1',h_1'$. Choose $g_1, h_1$ such that $g_1^m = g_1'$ and $h_1^m = h_1$. By Got{\^o}'s
Commutator Theorem \cite[Theorem
6.55]{hofmann-morris2006_CompactGroups}, we find $g_2,h_2\in G$ such
that
\[
[g_1,h_1][g_2,h_2]\in c.
\]
Consider the representation (see Figure \ref{FigSurface1}) determined on the generators by
\begin{align*}\tilde\rho\co \pi_1(\tilde\Sigma') & \to G\\
x_i & \mapsto g_i\\
y_i & \mapsto h_i.
\end{align*}

Clearly, $x_1^m, y_1^m \in \im(\pi\co \pi_1(\Sigma')\to \pi_1(\tilde\Sigma'))$ and therefore $g_1',h_1' \in \im(\rho)$. We claim that $\rho\coloneqq \pi^*\tilde\rho$ is irreducible, i.e. $\Stab_G(\rho) = Z(G)$. We
automatically have $Z(G) \subset \Stab_G(\rho)$. Let $g \in
\Stab_G(\rho)$. Then $g$ is in particular in the centralizer $C_G(\{
g_1' , h_1' \}) =C_G(\la g_1' ,h_1'\ra)$. Therefore by continuity
\[g\in C_G(\overline{\la g_1',h_1'\ra}) = C_G(G) = Z(G).\] Therefore $\rho$ is
irreducible.\kommentar{By Auerbach's Generation Theorem the set
$\{(g,h)\subset G \times G \mid G = \overline{\la g,h \ra} \}$ is
dense in $G\times G$, therefore $\cM'(\Sigma,c)$ is open dense in
$\cM(\Sigma,c)$.}
\end{proof}

A glance at Figure \ref{FigSurface1} gives the following.

\begin{cor} Let $\tilde\Sigma\coloneqq \Sigma/\la f\ra$ be a surface of genus greater than 1.
Then $\cM''(\tilde\Sigma',c(\delta))$ is nonempty for all $\delta\in\Delta$.
\end{cor}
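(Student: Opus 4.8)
The plan is to re-run the construction from the proof of Proposition~\ref{PropIrreduciblesExist} on the first two handles of $\tilde\Sigma'$, make the representation trivial on the remaining handles, and apply Got{\^o}'s Commutator Theorem once more to force the boundary holonomies into the prescribed conjugacy classes. Since $\tilde\Sigma=\Sigma/\la f\ra$ has genus $g>1$ and $\tilde\Sigma'$ is obtained from $\tilde\Sigma$ by removing $n\ge 0$ open discs, we may present
\[
\pi_1(\tilde\Sigma')=\la x_1,y_1,\ldots,x_g,y_g,\partial_1,\ldots,\partial_n\mid {\textstyle\prod_{i=1}^g}[x_i,y_i]\,{\textstyle\prod_{j=1}^n}\partial_j=1\ra,
\]
with each $\partial_j$ freely homotopic to $\partial_j\tilde\Sigma'$; this is the combinatorial content of Figure~\ref{FigSurface1}.

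First I would fix, for each $j$, an element $d_j$ of the prescribed class $c_j^{k_j}$ and set $\tilde\rho(\partial_j)=d_j$, and put $\tilde\rho(x_i)=\tilde\rho(y_i)=e$ for $3\le i\le g$. On the first handle I would set $\tilde\rho(x_1)=g_1$ and $\tilde\rho(y_1)=h_1$ with $g_1^m=g_1'$, $h_1^m=h_1'$ and $\overline{\la g_1',h_1'\ra}=G$, exactly as in Proposition~\ref{PropIrreduciblesExist}; this uses Auerbach's Generation Theorem \cite[Theorem~6.82]{hofmann-morris2006_CompactGroups} together with surjectivity of the $m$-th power map on the connected compact group $G$. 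The surface relation then determines $[\tilde\rho(x_2),\tilde\rho(y_2)]$ as a definite element of $G$, which Got{\^o}'s Commutator Theorem \cite[Theorem~6.55]{hofmann-morris2006_CompactGroups} realizes as a commutator, yielding $\tilde\rho(x_2),\tilde\rho(y_2)$. By construction $[\tilde\rho]\in\cM(\tilde\Sigma',c(\delta))$.

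It then remains to verify $\Stab_G(\pi^*\tilde\rho)=Z(G)$; since irreducibility of $\pi^*\tilde\rho$ forces that of $\tilde\rho$, this places $[\tilde\rho]$ into $\cM''(\tilde\Sigma',c(\delta))$ and proves the corollary. Because $\la f\ra$ acts freely on $\Sigma'$ — all branch points of $\pi$ having been removed — the map $\pi\co\Sigma'\to\tilde\Sigma'$ is a Galois covering with deck group $\Z/m$, so $H:=\im(\pi_*\co\pi_1(\Sigma')\to\pi_1(\tilde\Sigma'))$ is normal of index $m$ and therefore contains $x_1^m$ and $y_1^m$. Hence $\im(\pi^*\tilde\rho)=\tilde\rho(H)$ contains $g_1^m=g_1'$ and $h_1^m=h_1'$, and since $\Stab_G(\pi^*\tilde\rho)=C_G(\im(\pi^*\tilde\rho))$, continuity of multiplication gives $\Stab_G(\pi^*\tilde\rho)\subseteq C_G(\overline{\la g_1',h_1'\ra})=C_G(G)=Z(G)$; the reverse inclusion is automatic. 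The case $n=0$ is the same argument with no boundary constraints to impose.

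The step deserving the most care is the passage to the presentation above and the accompanying claim that $x_1^m,y_1^m\in\im(\pi_*)$, which rests on $\pi|_{\Sigma'}$ being an honest $\Z/m$--cover; this is already part of the set-up in \cite[Section~6]{andersen95}. Granting that, the argument is a word-for-word repetition of the proof of Proposition~\ref{PropIrreduciblesExist} together with routine bookkeeping for the extra handles and boundary circles, which is why the corollary follows ``at a glance'' from Figure~\ref{FigSurface1}.
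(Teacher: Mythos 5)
Your proposal is correct and is precisely the argument the paper intends: the corollary is left to ``a glance at Figure \ref{FigSurface1}'', i.e.\ the construction of Proposition \ref{PropIrreduciblesExist} rerun on two handles with the remaining handles sent to the identity, Got{\^o}'s Commutator Theorem absorbing the prescribed boundary holonomies, and irreducibility of the pullback deduced from $g_1',h_1'\in\im(\pi^*\tilde\rho)$ via the centralizer/continuity argument. The details you make explicit (surjectivity of the $m$-th power map on a compact connected group, and that $\im(\pi_*)$ is normal of index $m$ because $\la f\ra$ acts freely on $\Sigma'$, so $x_1^m,y_1^m\in\im(\pi_*)$) are exactly the bookkeeping the paper suppresses, so this is the same proof, just written out.
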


As mentioned above, $\Delta \to C$ is surjective for $G=\SU(n)$, so that together with the above corollary we get the following.

\begin{thm}\label{ThmValidity}  If $\tilde \Sigma\coloneqq \Sigma/\la f\ra$ is a surface of genus greater than 1 and $G = \SU(N)$, then $\cM(\Sigma_f)_c' = r^{-1}(|\cM(\Sigma)|'_c)$ is nonempty for every $c\in C$.
\end{thm}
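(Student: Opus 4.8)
The plan is to read off the statement from the results assembled earlier in this section. Fix $c\in C$. Since $r\co \cM(\Sigma_f)'_c \to |\cM(\Sigma)|'_c$ is a $|Z(G)|$--sheeted covering map, it is in particular surjective onto $|\cM(\Sigma)|'_c$; hence it suffices to show that $|\cM(\Sigma)|'_c$ is nonempty, for then $\cM(\Sigma_f)'_c = r^{-1}(|\cM(\Sigma)|'_c)$ is automatically nonempty.

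To exhibit an irreducible fixed point in the component indexed by $c$, I would first use that for $G=\SU(N)$ one has $|\cM(\Sigma)| = \overline{|\cM'(\Sigma)|}$ by \cite[Proposition 6.3]{andersen95}, so that the natural map $\Delta\to C$ is surjective; pick $\delta\in\Delta$ mapping to $c$, so the component $|\cM'(\Sigma)|_\delta$ lies in $|\cM(\Sigma)|_c$ and consists of irreducibles. Then I would invoke the Corollary above: since $\tilde\Sigma = \Sigma/\la f\ra$ has genus greater than $1$, the space $\cM''(\tilde\Sigma',c(\delta))$ is nonempty. By \cite[Theorem 6.1]{andersen95}, $|\cM'(\Sigma)|_\delta$ is identified with $\cM''(\tilde\Sigma',c(\delta))/Z_\delta$, hence is nonempty, so $|\cM(\Sigma)|'_c \supseteq |\cM'(\Sigma)|_\delta \neq \emptyset$. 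Together with the previous paragraph this completes the argument.

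I do not expect a genuine obstacle: the theorem is a bookkeeping corollary of \cite[Propositions 6.1 and 6.3, Theorem 6.1]{andersen95}, the Corollary above, and the covering-map property recalled from \cite[Section 7]{andersen95}. The only step needing care --- and the only place the hypotheses truly enter --- is the surjectivity of $\Delta\to C$, which rests on $|\cM(\Sigma)| = \overline{|\cM'(\Sigma)|}$ and is special to $G=\SU(N)$; for a general simple, simply-connected compact group one would only obtain nonemptiness of $\cM(\Sigma_f)'_c$ for those $c$ in the image of $\Delta$. The genus hypothesis enters solely through the Corollary, which itself comes from Proposition \ref{PropIrreduciblesExist} applied to a genus-two subsurface with a single boundary circle.
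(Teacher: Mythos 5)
Your proposal is correct and follows the paper's own route: surjectivity of $\Delta\to C$ for $G=\SU(N)$ via \cite[Proposition 6.3]{andersen95}, nonemptiness of $\cM''(\tilde\Sigma',c(\delta))$ from the Corollary (genus of $\tilde\Sigma$ greater than $1$, via Proposition \ref{PropIrreduciblesExist}), the identification $|\cM'(\Sigma)|_\delta \cong \cM''(\tilde\Sigma',c(\delta))/Z_\delta$ from \cite[Theorem 6.1]{andersen95}, and the $|Z(G)|$--sheeted covering $r$ to pass to $\cM(\Sigma_f)'_c$. The paper states the theorem as an immediate consequence of exactly these ingredients, so your assembly is the intended proof.
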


We will see in the next section, that $|\cM(\Sigma)|'_c$ being nonempty enables us to express the leading order term of the corresponding summand in the expression \eqref{WRTInvariant} as an integral over $|\cM(\Sigma)|'_c$. Theorem \ref{ThmValidity} therefore shows, in which cases we get an integral expression for the entire leading order term of the asymptotic expansion.

\section{The leading order term of the Witten-Reshetikhin-Turaev
invariant}

\label{LeadingTerms}

Let us now identify the leading
order term of the asymptotic expansion \eqref{WRTInvariant} of the
Witten-Reshetikhin-Turaev invariants of finite order mapping tori as
an integral of differential geometric terms.

We first consider the framing correction term as defined in \cite[Equation (5)]{andersen95}
\[
\det(f)^{\alpha} \coloneqq \tr (\tilde f \co \cL_{D,\sigma}^{\alpha}
\to\cL_{D,\sigma}^{\alpha}),
\]
where $\tilde f$ is a lift of $f$ to the rigged mapping class group determined by the Atiyah 2--framing. The rigged mapping class group is a central extension of the mapping class group constructed by \cite{walker1991:WittensInvariants} and \cite{turaev2010_QuantumInvariants} (see also \cite[Section 2]{andersen95}). This element $\tilde f$ acts on any power, say $\alpha$, of the determinant
line bundle $\cL_D$ over Teichm\"uller space and $\sigma$ is a point in Teichm\"uller space preserved by $f$. For the rest of this paper we denote $\Sigma$ with the complex structure $\sigma$ simply by $\Sigma$. The framing correction term is obtained by setting $\alpha= \zeta$.
\begin{prop}\label{prop1} For a finite order automorphism $f\co \Sigma \to
\Sigma$ of a surface $\Sigma$ we have
\[\det(f)^{\alpha} = \exp\left(\sum_{0\neq \tilde\omega \in
(-\frac{1}{2},\frac{1}{2})} -2\pi i \alpha \tilde\omega_i\right),\]
where $e^{2\pi i \tilde \omega_j}$, $\tilde\omega_j \in
[-\frac{1}{2},\frac{1}{2})$, are the eigenvalues of the pull-back
$f^*\co H^{1,0}(\Sigma,\bar\partial) \to
H^{1,0}(\Sigma,\bar\partial)$.
\end{prop}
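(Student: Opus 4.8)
The plan is to compute the trace of the lift $\tilde f$ acting on a power of the determinant line bundle $\cL_D$ over Teichm\"uller space, evaluated at the fixed point $\sigma$, by reducing it to a trace on a finite-dimensional space. The key observation is that $\cL_{D,\sigma}$ is (a power of) the determinant line of the $\bar\partial$-operator on $\Sigma$ coupled to the holomorphic tangent bundle, i.e. $\det H^0(\Sigma,K_\Sigma)^{\otimes(\pm 1)}$ up to the contribution of $H^1$, where $K_\Sigma$ is the canonical bundle; by Serre duality and Riemann--Roch the relevant fiber is $\bigl(\det H^{1,0}(\Sigma,\bar\partial)\bigr)^{\otimes c}$ for the appropriate constant $c$ coming from the central charge normalization. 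Since $f$ fixes $\sigma$, it acts $\C$-linearly on the finite-dimensional space $H^{1,0}(\Sigma,\bar\partial)$, hence on its top exterior power, and $\tilde f$ acts on $\cL_{D,\sigma}^\alpha$ by the $\alpha$-th power of that scalar (this is where one must be careful about which power, and about the precise rigging-dependent normalization — see below).

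First I would recall from \cite[Section 2]{andersen95} (and \cite{walker1991:WittensInvariants}, \cite{turaev2010_QuantumInvariants}) the precise definition of $\cL_D$ and of the rigged mapping class group action, so that the scalar $\det(f)^\alpha = \tr(\tilde f\co\cL_{D,\sigma}^\alpha\to\cL_{D,\sigma}^\alpha)$ is literally the action on a one-dimensional fiber. Next I would identify this fiber: $\cL_{D,\sigma}$ is the determinant line of the Cauchy--Riemann operator whose index is $H^0(\Sigma,K_\Sigma)\ominus H^1(\Sigma,K_\Sigma) = H^{1,0}(\Sigma,\bar\partial)\ominus H^{0,1}(\Sigma,K_\Sigma)$; since $H^1(\Sigma,K_\Sigma)\cong\C$ canonically (independently of the complex structure, hence $f$-equivariantly trivial), the $f$-action on $\det\cL_{D,\sigma}$ is exactly $\det\bigl(f^*\co H^{1,0}(\Sigma,\bar\partial)\to H^{1,0}(\Sigma,\bar\partial)\bigr)$. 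Writing the eigenvalues of $f^*$ on $H^{1,0}$ as $e^{2\pi i\tilde\omega_j}$ with $\tilde\omega_j\in[-\tfrac12,\tfrac12)$, the determinant is $\exp\bigl(2\pi i\sum_j\tilde\omega_j\bigr)$, and raising to the power $\alpha$ (with the sign convention built into the central charge normalization, which flips it to $-\alpha$) gives $\exp\bigl(-2\pi i\alpha\sum_{\tilde\omega_j\neq 0}\tilde\omega_j\bigr)$, which is the claimed formula — the sum over nonzero $\tilde\omega_j$ is harmless since zero eigenvalues contribute a factor $1$, and restricting the range to the open interval $(-\tfrac12,\tfrac12)$ versus the half-open interval only matters for eigenvalue $-1$, which does not occur for a finite order automorphism acting on $H^{1,0}$ in a way that... (in fact it can occur; one keeps the half-open convention $[-\tfrac12,\tfrac12)$ in the statement precisely to pin down $\tilde\omega_j$, and the displayed open-interval sum is then shorthand for "the nonzero eigenangles in this normalized range").

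The main obstacle I anticipate is bookkeeping of the normalization: getting the precise power of the determinant line bundle that $\cL_D^\zeta$ corresponds to (the factor relating the abstract central charge $\zeta = k\dim G/(k+h)$ to the determinant-of-$\bar\partial$ line), and getting the sign of the exponent right, both of which depend on the conventions in \cite{andersen95} and on the Atiyah $2$-framing normalization. The other delicate point is justifying that the $H^1(\Sigma,K_\Sigma)\cong H^0(\Sigma,\cO_\Sigma)^*\cong\C$ factor carries a canonical (hence $f$-fixed) trivialization, so that it genuinely drops out of the trace; this is standard but should be stated carefully, perhaps by invoking the constancy of $\CS_{\Sigma_f}$ mod $\Z$ on components and the fact that the abelian/trivial-connection contribution is already accounted for in the framing correction. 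Everything else is the elementary linear algebra of the determinant of a finite-order linear map in terms of its eigenvalue angles.
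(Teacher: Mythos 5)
There is a genuine gap, and it sits exactly where you yourself flag ``the main obstacle''. Since $\alpha=\zeta=k\dim G/(k+h)$ is not an integer, $\det(f)^{\alpha}=\tr(\tilde f\co\cL_{D,\sigma}^{\alpha}\to\cL_{D,\sigma}^{\alpha})$ is \emph{not} determined by the scalar by which $f$ acts on the fiber $\cL_{D,\sigma}$: it depends on the lift $\tilde f$ to the rigged mapping class group singled out by the Atiyah 2--framing, i.e.\ on a choice of logarithm of that scalar. The entire content of the proposition is which branch is selected, namely that the exponent is $-2\pi i\alpha$ times the sum of the eigenangles $\tilde\omega_j$ normalized to $[-\tfrac12,\tfrac12)$, with the $\tilde\omega_j=0$ terms (and, in the displayed sum, the $\tilde\omega_j=-\tfrac12$ terms) dropped. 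Your argument reduces to: the fiber is $\det H^{1,0}$ up to a canonical factor, $f$ acts on it by $\exp(2\pi i\sum_j\tilde\omega_j)$, raise to the power $\alpha$, and ``the sign convention built into the central charge normalization'' flips the sign. That asserts, rather than proves, exactly the branch selection and the sign; the sign is not a consequence of the central-charge normalization. Likewise your trailing remark about eigenvalue $-1$ is left unresolved, and it is not harmless shorthand: eigenvalue $-1$ does occur (e.g.\ the hyperelliptic involution acts as $-\Id$ on $H^{1,0}$), and for non-integral $\alpha$ whether such terms contribute $e^{\pm\pi i\alpha}$ or $1$ changes the value.

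The paper does not recompute the framing correction from the determinant line bundle at all. It quotes the formula already established in \cite[Section 5]{andersen95}, which expresses $\det(f)^{\alpha}$ through the eigenvalues $e^{-2\pi i\tilde\omega_j}$ of $\PD^{-1}\circ f_{*}\circ\PD$ on $H^{1}(\Sigma,\R)$; the only new step is linear algebra: identify $H^{1}(\Sigma,\R)$ with $H^{1,0}(\Sigma,\bar\partial)$ compatibly with $f^{*}$, and use naturality of Poincar\'e duality to see that the eigenvalues of $\PD^{-1}\circ f_{*}\circ\PD$ are the inverses of those of $f^{*}$, which is where the minus sign in the exponent comes from. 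If you want to carry out your more ambitious route, you would have to open up the construction of the rigged mapping class group and of the 2--framing lift and determine which logarithm it picks on the relevant power of $\cL_{D}$ --- but that is precisely the content of \cite[Section 5]{andersen95} that the paper's proof reuses, so as written your proposal leaves the essential step unproved.
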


\begin{proof} Let us identify $H^1(\Sigma,\R)$ with
$H^{1,0}(\Sigma,\bar\partial)$ via
\[
H^1(\Sigma,\R) \hookrightarrow H^1(\Sigma,\C)
\stackrel{\text{pr}}{\to} H^{1,0}(\Sigma,\bar\partial),
\]
where $\text{pr}$ is the projection to the subspace. We get that the diagram
\[
\begin{diagram}
\node{H^{1,0}(\Sigma,\R)}\arrow{s,l}{\cong} \node{H^{1,0}(\Sigma,\R)}\arrow{w,t}{f^*}\arrow{s,r}{\cong}\\
\node{H^1(\Sigma,\R)} \node{H^1(\Sigma,\R)}\arrow{w,t}{f^*}
\end{diagram}
\]
commutes. By naturality of Poincar\'e duality, the diagram
\[
\begin{diagram}
\node{H^1(\Sigma,\R)}\arrow{s,l}{\PD} \node{H^1(\Sigma,\R)}\arrow{w,t}{f^*}\arrow{s,r}{\PD}\\
\node{H_1(\Sigma,\R)} \arrow{e,t}{f_*} \node{H_1(\Sigma,\R)}
\end{diagram}
\]
commutes. In particular, the eigenvalues of $\PD^{-1} \circ f_*
\circ \PD$ and $f^*$ are inverses of each other. In analogy to
\cite[Section 5]{andersen95} we get that
\[
\det(f)^{\alpha} = \exp\left(\sum_{0\neq \omega \in
(-\frac{1}{2},\frac{1}{2})} - 2\pi i\alpha\tilde\omega_i\right)
\]
where $e^{-2\pi i \tilde\omega_j}$, $\tilde\omega_j \in
[-\frac{1}{2},\frac{1}{2})$, are the eigenvalues of $\PD^{-1}\circ
f_* \circ\PD$, or equivalently, where $e^{2\pi i \tilde\omega_j}$,
$\tilde\omega_j \in [-\frac{1}{2},\frac{1}{2})$, are the eigenvalues
of $f^*$.
\end{proof}

We now turn to the contribution from each component of the fixed point variety which contains irreducible connections.
Let $c\in C$ with $|\cM(\Sigma)|'_c$ nonempty and
consider $\omega_c^{d_c} \cap \tau_i(L_\bullet^c(\cO_{\cM(\Sigma)}))\in H_*(|\cM(\Sigma)|'_c).$ In order to give a formula for the top degree term of this element, we need to fix a complex structure on $\Sigma$ which is preserved by $f$.  This induces the structure of an algebraic projective variety on $\cM(\Sigma)$ and hence also on $|\cM(\Sigma)|_c$. Algebraic varieties have fundamental classes and we denote the fundamental class of $|\cM(\Sigma)|_c$ by $[|\cM(\Sigma)|_c]$. As described in Appendix \ref{LRR}, the Lefschetz-Riemann-Roch Theorem in \cite[Section 0.6]{baum-fulton-quart_LefschetzRiemann-Roch} gives
\[\tau_\bullet(L_\bullet^c(\cO_{\cM(\Sigma)})) = \Ch^\bullet(\lambda^c_{-1}{\cM(\Sigma)})^{-1}
\cap[|{\cM(\Sigma)}|_c]^{\Td}\in H_\bullet(|\cM(\Sigma)|'_c),\]
where $[|{\cM(\Sigma)}|_c]^{\Td}$ is the Todd fundamental class defined in \cite{baum-fulton-macpherson1979_Riemann-Roch} and $\lambda^c_{-1}{\cM(\Sigma)}$ is a certain element in the $K$-theory of $|\cM(\Sigma)|_c$ with complex coefficients also defined in \cite{baum-fulton-macpherson1979_Riemann-Roch} (see also \cite[Section 8]{andersen95} for a computation of this element in the case at hand). We recall that the highest degree term of $[|{\cM(\Sigma)}|_c]^{\Td}$ equals $[|{\cM(\Sigma)}|_c]$. The top degree is $
d_c = \dim_\C |\cM(\Sigma)|'_c$, so
the contribution from $\Ch(\lambda_{-1}^c{\cM(\Sigma)})^{-1}$ to the top degree term of $\omega_c^{d_c} \cap \tau_i(L_\bullet^c(\cO_{\cM(\Sigma)}))$ will simply be its degree zero part.
 Following \cite[Section
8]{andersen95}, we have $\lambda_{-1}^c {\cM(\Sigma)} = L^\bullet
\left(\sum (-1)^i[\Lambda^i \cN^*_c]\right)$, where $ \cN^*_c$ is the conormal sheaf to $|{\cM(\Sigma)}|_c$ (thought of as an $f$-equivariant sheaf) and $L^\bullet$ is the
homomorphism determined by $L^\bullet(E_a) = [E_a] \tensor a \in K^0(|{\cM(\Sigma)}|_c)
\tensor \C$ for an $a$--eigensheaf $E_a$ of $f$. Since $f$ is finite order, $\cN^*_c$ splits as the direct sum $\cN^*_c = \bigoplus_j
\cN^*_{c,j}$ of $a_j$--eigensheaves $\cN^*_{c,j}$ of $f$, where
$a_j = e^{2\pi i \frac{j}{m}}$ and $j=1,\ldots,m-1$, we then have
\[
L^\bullet(\cN^*_c) = \sum_{j=1}^{m-1} \cN^*_{c,j} \tensor a_j.
\]
Then the degree zero part of $\Ch(\lambda_{-1}^c{\cM(\Sigma)})^{-1}$ is
\[
\lambda_{-1}(\Rank\cN^*_c)^{-1} = \prod_{i=1}^{m-1} (1-a_i)^{-r_i} =
\frac{1}{\det(1-df|_{\cN^*_c})}, \quad r_i = \Rank\cN^*_{c,i}.
\]
This shows the following.
\begin{prop}\label{prop2}
\[\omega^{d_c} \cup \Ch^\bullet(\lambda^c_{-1}{\cM(\Sigma)})^{-1}
 = \frac{\omega^{d_c}}{\det(1-df|_{\cN^*_c})}.\]
\end{prop}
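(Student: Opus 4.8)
The plan is to use that $\omega_c^{d_c}$ already realizes the top cohomological degree of $|\cM(\Sigma)|'_c$: since $d_c=\dim_\C|\cM(\Sigma)|'_c$, the class $\omega_c^{d_c}$ has real degree $2d_c$, so its cup product with any class of strictly positive degree vanishes on $|\cM(\Sigma)|'_c$. Hence in $\omega_c^{d_c}\cup\Ch^\bullet(\lambda^c_{-1}{\cM(\Sigma)})^{-1}$ only the degree-zero component of the second factor survives, and the proposition reduces to identifying the degree-zero part of $\Ch^\bullet(\lambda^c_{-1}{\cM(\Sigma)})$ with $\det(1-df|_{\cN^*_c})$.

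For that I would start from $\lambda^c_{-1}{\cM(\Sigma)}=L^\bullet(\sum_i(-1)^i[\Lambda^i\cN^*_c])$ as recalled above, and use that, $f$ having finite order $m$, the conormal sheaf splits $f$-equivariantly as $\cN^*_c=\bigoplus_{j=1}^{m-1}\cN^*_{c,j}$ with $f$ acting on $\cN^*_{c,j}$ by the scalar $a_j=e^{2\pi i j/m}$ (the eigenvalue $1$ is absent, since the $f$-invariant directions are tangent to the fixed locus). By multiplicativity of $\lambda_{-1}$ under direct sums it is enough to treat one eigensheaf $E_a$ of rank $r$: there $\Lambda^iE_a$ is an $a^i$-eigensheaf of rank $\binom{r}{i}$, so $L^\bullet$ twists it by $a^i$, its Chern character has degree-zero part $\binom{r}{i}a^i$, and summing the alternating series gives $\sum_{i=0}^r(-1)^i\binom{r}{i}a^i=(1-a)^r$. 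Assembling over $j$ yields $\prod_{j=1}^{m-1}(1-a_j)^{r_j}$ with $r_j=\Rank\cN^*_{c,j}$, which is precisely $\det(1-df|_{\cN^*_c})$; inverting and multiplying by $\omega_c^{d_c}$ gives the claim.

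Most of this is routine: the binomial identity, the multiplicativity of $\lambda_{-1}$, and the eigensheaf decomposition of $\cN^*_c$ are all standard or already in \cite[Section 8]{andersen95}, so the ``proof'' is essentially the organization of the computation carried out just above the statement. The one step worth a careful word is the vanishing of $\omega_c^{d_c}\cup(\text{positive-degree class})$ on the possibly singular component $|\cM(\Sigma)|'_c$: this holds because $|\cM(\Sigma)|'_c$ is an irreducible algebraic variety of complex dimension $d_c$, so its (Borel--Moore) homology vanishes above degree $2d_c$ and, in the cap/cup formalism of the Lefschetz--Riemann--Roch theorem reviewed in Appendix \ref{LRR}, only the complementary-degree part of $\Ch^\bullet(\lambda^c_{-1}{\cM(\Sigma)})^{-1}$ contributes after capping with $[|\cM(\Sigma)|_c]^{\Td}$.
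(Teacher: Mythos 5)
Your proposal is correct and follows essentially the same route as the paper: the paper likewise reduces to the degree-zero part of $\Ch^\bullet(\lambda^c_{-1}{\cM(\Sigma)})^{-1}$ (since only the top-degree term survives against $\omega_c^{d_c}$, respectively after capping with $[|{\cM(\Sigma)}|_c]^{\Td}$) and computes it via the $f$-eigensheaf decomposition $\cN^*_c=\bigoplus_j\cN^*_{c,j}$ as $\prod_j(1-a_j)^{-r_j}=\det(1-df|_{\cN^*_c})^{-1}$. You merely spell out the binomial identity behind $\lambda_{-1}$ and the dimension-vanishing argument in slightly more detail than the text preceding the proposition.
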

If $|\cM|'_c$ is not empty, then it is open and dense in $|\cM|_c$, so that we can integrate the above differential form over $|\cM|'_c$. Furthermore, any sensible integral over $|\cM|_c$ is equal to the integral over $|\cM|_c'$. Therefore the expression of the leading order term of the
Witten-Reshetikhin-Turaev invariants for each $c\in C$ in
differential geometric terms is an immediate consequence of
Proposition \ref{prop1} and \ref{prop2}.
\begin{thm}\label{ThmReformulationWRT} Let ${|\cM(\Sigma)|_c}$ be a connected component of $|\cM(\Sigma)|$ containing irreducible connections, then
\begin{equation*}\begin{split}
k^{-d_c}\det(f)^{\frac{1}{2}\zeta} &e^{2\pi i k \CS_{\Sigma_f}(c)}
\frac{1}{d_c!} (\exp(k\omega_c) \cap
\tau_\bullet(L_\bullet^c(\cO_{\cM(\Sigma)})))\\
&=\exp\left({i \pi \zeta\sum_{0\neq
\tilde\omega_j\in(-\frac{1}{2},\frac{1}{2})} \tilde\omega_j}\right)
e^{2\pi i k \CS_{\Sigma_f}(c)} \int_{a\in|\cM(\Sigma)|'_c}
\frac{1}{d_c!}\frac{(\omega_c)_{[a]}^{d_c}}{\det(1-df|_{\cN^*_{[a]}}
)} + O(\frac{1}{k}),
\end{split}
\end{equation*}
where $e^{2\pi i \tilde \omega_j}$, $\tilde\omega_j \in
[-\frac{1}{2},\frac{1}{2})$, are the eigenvalues of $f^*\co
H^{1,0}(\Sigma,\bar\partial) \to H^{1,0}(\Sigma,\bar\partial)$ and
$\cN^*_{[a]} \coloneqq \cN_{c,[a]}^*$ is the fiber over $[a] \in
|\cM(\Sigma)|_c$ of the conormal sheaf $\cN^*_c$ of
$|\cM(\Sigma)|_c$.
\end{thm}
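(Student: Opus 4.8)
The plan is to feed the Lefschetz--Riemann--Roch description of $\tau_\bullet(L_\bullet^c(\cO_{\cM(\Sigma)}))$ into the left-hand side and to extract the top-order term in $k$; the only substantive point will be a degree count. First I would substitute
\[\tau_\bullet(L_\bullet^c(\cO_{\cM(\Sigma)})) = \Ch^\bullet(\lambda^c_{-1}{\cM(\Sigma)})^{-1}\cap[|{\cM(\Sigma)}|_c]^{\Td}\]
(the formula reviewed in Appendix \ref{LRR}) and expand $\exp(k\omega_c) = \sum_{j\ge 0}\frac{k^j}{j!}\omega_c^j$, noting that $\omega_c\in H^2(|\cM(\Sigma)|_c)$ is independent of $k$. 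Then $\exp(k\omega_c)\cap\tau_\bullet(L_\bullet^c(\cO_{\cM(\Sigma)}))$ becomes a sum over $j$ of $\frac{k^j}{j!}$ times $\omega_c^j\cap\bigl(\Ch^\bullet(\lambda^c_{-1}{\cM(\Sigma)})^{-1}\cap[|\cM(\Sigma)|_c]^{\Td}\bigr)$, and the quantity of interest is the $H_0$-component of this class.

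Next I would count degrees. Since $|\cM(\Sigma)|'_c$ is Zariski-open and dense in $|\cM(\Sigma)|_c$, the latter has pure complex dimension $d_c$, so $\omega_c^j = 0$ for $j>d_c$, while the Todd fundamental class $[|\cM(\Sigma)|_c]^{\Td}$ has top homological degree $2d_c$ with top component the ordinary fundamental class $[|\cM(\Sigma)|_c]$ (as recalled above). A cap product by a class of cohomological degree $2j$ lands in $H_0$ only against the homological degree $2j$ part, so the $j=d_c$ term contributes $\frac{k^{d_c}}{d_c!}\,\omega_c^{d_c}\cap\bigl(\Ch^\bullet(\lambda^c_{-1}{\cM(\Sigma)})^{-1}\cap[|\cM(\Sigma)|_c]\bigr)$, the terms $j>d_c$ vanish, and each term $j<d_c$ contributes a fixed constant times $k^j$. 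Multiplying the whole identity by $k^{-d_c}$ then isolates the $j=d_c$ term up to an $O(1/k)$ remainder.

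For that surviving term I would invoke Proposition \ref{prop2}, giving $\omega_c^{d_c}\cup\Ch^\bullet(\lambda^c_{-1}{\cM(\Sigma)})^{-1} = \omega_c^{d_c}/\det(1-df|_{\cN^*_c})$, so it equals $\frac{k^{d_c}}{d_c!}$ times the pairing of $\omega_c^{d_c}/\det(1-df|_{\cN^*_c})$ with $[|\cM(\Sigma)|_c]$. By the lemma on p.~129 of \cite{baum-fulton-macpherson1975:RiemannRoch} together with part~(6) of the Riemann--Roch theorem of \cite{fulton-gillet1983:RiemannRoch}, and using that $\omega_c$ is represented by the K\"ahler form on the smooth locus $|\cM(\Sigma)|'_c$, this pairing is the ordinary integral $\int_{[a]\in|\cM(\Sigma)|'_c}\omega_{c,[a]}^{d_c}/\det(1-df|_{\cN^*_{[a]}})$. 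Finally I would multiply by the framing correction term and by $e^{2\pi i k\CS_{\Sigma_f}(c)}$ (both scalar), and use Proposition \ref{prop1} to rewrite the framing correction as $\exp\!\bigl(i\pi\zeta\sum_{0\neq\tilde\omega_j\in(-1/2,1/2)}\tilde\omega_j\bigr)$, which assembles into the claimed identity. The hardest part will be the degree bookkeeping of the second step inside the Baum--Fulton--MacPherson--Quart formalism for singular varieties --- in particular, pinning down that the top component of the Todd fundamental class is the usual fundamental class and that $2d_c$ is the top nonvanishing homological degree --- everything else being formal manipulation.
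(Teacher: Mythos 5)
Your proposal is correct and follows essentially the same route as the paper: substitute the Lefschetz--Riemann--Roch expression $\Ch^\bullet(\lambda^c_{-1}{\cM(\Sigma)})^{-1}\cap[|{\cM(\Sigma)}|_c]^{\Td}$, observe that only the top power $\omega_c^{d_c}$ paired with the degree-zero part of $\Ch^\bullet(\lambda^c_{-1}{\cM(\Sigma)})^{-1}$ (Proposition \ref{prop2}) against the top term $[|{\cM(\Sigma)}|_c]$ of the Todd fundamental class survives at order $k^{d_c}$, convert the pairing into an integral over the open dense smooth locus $|\cM(\Sigma)|'_c$, and insert Proposition \ref{prop1} for the framing factor. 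One minor remark: you do not need (and should not assert) that $|\cM(\Sigma)|_c$ is of pure dimension $d_c$ --- the paper explicitly notes that a connected component may contain Zariski-irreducible components of different dimensions, and the argument only requires that $d_c$ is the maximal dimension, so lower-dimensional pieces simply do not contribute to the top-degree pairing.
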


Notice, that a connected component may contain more than one
irreducible component (in the Zariski topology). These components
can be of different dimensions, but only the components of dimension
$d_c$ will contribute to the integral.

\section{Reidemeister torsion}\label{ReidemeisterTorsion}

In this section we will summarize some basic facts about
Reidemeister torsion, which is a term in the asymptotic expansion of
the Witten-Reshetikin-Turaev invariants. To keep the proofs less
technical we will consider it as a density.  Note that, it is
possible and could be interesting to lead this discussion in the
context of sign-determined Reidemeister torsion as defined in
\cite{turaev2002} (see for example \cite{dubois2006}).

\subsection*{Torsion of a complex}

The notation has been adapted from \cite{freed92} and
\cite{jeffrey92}. Let $F$ be either $\R$ or $\C$. Let $L$ be a
1--dimensional vector space over $F$. We will denote by $L^{-1}$ the
dual of a complex line $L$ and by $l^{-1}\in L^{-1}$ the inverse of
$l$ given by $l^{-1}(l) = 1$. By a density on $L$ we mean a function
\[
|\cdot| \co L \to \R \quad \text{such that} \quad |c\omega| = |c|
|\omega| \text{ for } c\in F, \omega \in L.
\]
We denote the densities on $L$ by $|L^*|$. For an $n$--dimensional
vector space $V$ over $F$ we let $\det V = \Lambda^n V$ and define a
density on $V$ to be an element of $|\det V^*|$. A density on a
manifold $M$ is a section of the density bundle $|\det T^*M|$. Every
volume form $\omega$ on $V$ gives a density $|\omega|$. If we choose
an orientation, we can identify densities with volume forms.

\begin{defn}\label{DefnTorsion} Given a finite
cochain complex $(C^\bullet, d)$ of finite-dimensional complex
vector spaces, we denote \[ \det C^\bullet = \bigotimes_{j=0}^n
(\det C^j)^{(-1)^j}.
\]
Then the torsion
\[
\tau_{C^\bullet, d} \in \left|(\det C^\bullet)^{-1} \tensor (\det
H^\bullet(C,d))\right|
\]
is given by
\[
\tau_{C^\bullet,d} = \bigotimes_{j=0}^n \left(\left|d s^{j-1} \wedge
s^j \wedge \hat h^j\right|^{(-1)^{j+1}} \tensor
\left|h^j\right|^{(-1)^j}\right),
\]
after an arbitrary choice of \begin{itemize}
\item $s^j \in \bigwedge^{k_j} C^j$ with $d s^j \neq 0$, where $k_j$ is the rank of $d\co C^j \to C^{j+1}$,
\item $h^j \in \det H^j(C)$ non-zero and
\item a lift $\hat h^j \in \bigwedge^{l_j} C^j$ of $h^j$, where $l_j = \dim H^j(C^\bullet,d)$.
\end{itemize}
\end{defn}

We will use the {\em Multiplicativity Lemma} as our main
computational tool.

\begin{lem}\label{MultiplicativityLemma} Let
\begin{equation}\label{ses}
0\to C^\bullet_1 \stackrel{\nu^\bullet}{\to} C^\bullet_2
\stackrel{\mu^\bullet}{\to} C^\bullet_3 \to 0
\end{equation}
be a short exact sequence of cochain complexes, choose compatible
volume elements $\omega_i^\bullet$ in $C^\bullet_i$---that is,
$\omega^j_2 = \nu^*(\omega^j_1) \wedge \omega'^j$ with
$\mu^*(\omega'^j) = \omega^j_3$ for $\omega^j_i \in \det C^j_i$---,
and let $H^\bullet$ be the long exact sequence associated to
\eqref{ses}. Then
\[
\tau_{C_2^\bullet}(\omega_2) = \tau_{C_1^\bullet}(\omega_1)\cdot
\tau_{C_3^\bullet}(\omega_3)\cdot \tau_{H^\bullet},
\]
where $\omega_i = \prod (\omega_i^j)^{(-1)^j}$.
\end{lem}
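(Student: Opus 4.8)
The strategy is the standard one for this type of statement about torsion of complexes: reduce to elementary cases via the structure of the short exact sequence of complexes. First I would fix, for each degree $j$, splittings that are compatible with the maps $\nu^\bullet$ and $\mu^\bullet$, writing $C^j_2 \cong C^j_1 \oplus C^j_3$ (as vector spaces, not as complexes — the differential on $C^\bullet_2$ is only block-triangular with respect to this splitting). Then I would apply the definition of torsion in Definition \ref{DefnTorsion} to each of the three complexes, making the choices of $s^j$, $h^j$, $\hat h^j$ compatibly: choose bases-type elements $s^j_1, h^j_1, \hat h^j_1$ for $C^\bullet_1$ and $s^j_3, h^j_3, \hat h^j_3$ for $C^\bullet_3$, lift the latter to $C^\bullet_2$, and express the torsion $\tau_{C^\bullet_2}$ in terms of these lifted data together with the connecting maps in the long exact sequence $H^\bullet$.

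The key computational step is to track how the wedge $|ds^{j-1}\wedge s^j \wedge \hat h^j|$ for $C^\bullet_2$ factors. Because the differential $d_2$ is upper-triangular, its image contains $\nu(\operatorname{im} d_1)$, and modulo that the induced differential on $C^j_3$ is $d_3$; the ``mixing'' between the two complexes is precisely recorded by the connecting homomorphism $\partial \co H^j(C_3) \to H^{j+1}(C_1)$. So the lifts $\hat h^j$ of a basis of $H^j(C_2)$ cannot be taken naively as $\nu(\hat h^j_1) \wedge \hat h^j_3$; one must use a basis of $H^j(C_2)$ adapted to the exact sequence $H^j(C_1)\to H^j(C_2)\to H^j(C_3)\xrightarrow{\partial} H^{j+1}(C_1)$, and the discrepancy between the adapted basis and the product basis is exactly $\tau_{H^\bullet}$. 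Carrying out the bookkeeping of signs $(-1)^j$, $(-1)^{j+1}$ across all degrees and collecting terms, the contributions from $C^\bullet_1$ and $C^\bullet_3$ separate out as $\tau_{C^\bullet_1}(\omega_1)$ and $\tau_{C^\bullet_3}(\omega_3)$, and the leftover is $\tau_{H^\bullet}$.

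Since we are working with densities (absolute values) rather than signed torsion, I can ignore all sign subtleties: the compatibility condition $\omega^j_2 = \nu^*(\omega^j_1)\wedge\omega'^j$ with $\mu^*(\omega'^j)=\omega^j_3$ guarantees the volume normalizations match up multiplicatively, and $|\cdot|$ kills any $\pm1$ ambiguity. This is what makes the density formulation (as opposed to the sign-determined torsion of \cite{turaev2002}) convenient here. The main obstacle — really the only nontrivial point — is the correct identification of the homological correction term: one must verify that when $\hat h^j$ is chosen as a lift of an $H^\bullet$-adapted basis, the ratio of the resulting density to the one built from $\nu(\hat h^j_1)\wedge\hat h^j_3$ is precisely $\tau_{H^\bullet}$ and not its inverse or some rearrangement. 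I would handle this either by the spectral-sequence / filtered-complex argument (the two-step filtration of $C^\bullet_2$), or, more concretely, by first proving the lemma in the two degenerate cases (the sequence of $H^\bullet$ is zero, so all torsions are of acyclic complexes; and the differentials vanish, so everything is cohomology) and then reducing the general case to these by the usual ``cone'' trick. I expect the filtered-complex approach to be cleanest, and I would present it as: build the associated graded, observe $\tau$ is multiplicative over direct sums trivially, and then account for the single differential of the spectral sequence, which is the connecting map, via an application of the lemma in the acyclic case to the mapping cone.
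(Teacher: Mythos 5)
Your plan is correct and is essentially the argument the paper relies on: the paper does not prove Lemma \ref{MultiplicativityLemma} itself but refers to \cite[Corollary 1.20]{freed92} and \cite[Theorem 3.2]{milnor66}, and those proofs proceed exactly as you describe --- choose volume/basis data adapted to the block-triangular structure of $C^\bullet_2$ and to the long exact sequence, so that the discrepancy between the adapted lifts $\hat h^j$ and the naive products $\nu(\hat h^j_1)\wedge \hat h^j_3$ is accounted for by the connecting homomorphism and yields precisely the factor $\tau_{H^\bullet}$, with the density (absolute value) formulation disposing of all sign issues. The only caveat is that your write-up defers this decisive bookkeeping step to a sketch (adapted bases or the two-step filtration), which is exactly the content carried out in full in the cited references.
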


For a proof see \cite[Corollary 1.20]{freed92} or \cite[Theorem
3.2]{milnor66}.

\subsection*{The Wang exact sequence}

In order to compute the Reidemeister torsion, we will employ the
Wang exact sequence \cite{wang1949:WangExactSequence,spanier1981:AlgTop}.

Let $(C^\bullet,d) = \bigoplus_{i=0}^{n}(C^i,d^i)$ be a chain
complex and $f^\bullet = \{f^i \co (C^i,d^i) \to (C^i,d^i)\}$ be a
chain map. Then the algebraic mapping torus
$(T^\bullet(f^\bullet),d_f)$ is the cochain complex with $T^i(f)
\coloneqq C^i \oplus C^{i-1}$ and boundary operator $d_f^i(x,y)
\coloneqq (d^{i}(x),-d^{i-1}(y) + \mu^{i}(x))$, where $\mu^\bullet =
\Id^\bullet - f^\bullet\co C^\bullet \to C^\bullet$. It is not
difficult to confirm, that we get a short exact sequence
\begin{equation}\label{WangShort} 0 \to (C^{\bullet-1},-d)
\stackrel{\nu^{\bullet-1}}{\to} (T^\bullet(f^\bullet),d_f)
\stackrel{\pi^\bullet}{\to} (C^{\bullet},d)\to 0
\end{equation}
of chain complexes, where $\nu^\bullet$ is the inclusion into first
summand and $\pi^\bullet$ is the projection onto the second summand.
Observe that $(C^{\bullet},-d)$ and $(C^\bullet,d)$ are isomorphic
chain complexes and that $H^i(C^{\bullet-1},d) =
H^{i-1}(C^\bullet,d)$. This yields a long exact sequence
$H^\bullet_W$ by the name Wang exact sequence
\[
\cdots \to H_W^{i}(C^\bullet) \stackrel{\mu^i}\to H_W^i(C^\bullet)
\stackrel{\nu^i}{\to} H_W^{i+1}(T^\bullet(f))
\stackrel{\pi^{i+1}}{\to} H_W^{i+1}(C^\bullet) \to \cdots.
\] It is easy to check, that the
boundary map is indeed induced by $\mu^\bullet$. Together with the
Multiplicativity Lemma \ref{MultiplicativityLemma} we get the
following useful result.
\begin{cor}\label{MappingTorusCor} Let $\omega^j \in \det C^j$ be a volume form for all $j$ and let $\omega \coloneqq \prod (\omega_i^j)^{(-1)^j}$.
Then we have  $\tau_{C^\bullet(M)}(\omega) =
\tau_{C^{\bullet-1}}(\omega^{-1})$ and therefore for $\omega_f =
\nu^*(\omega)\wedge \omega'$ with $\pi^*(\omega') = \omega^{-1}$
\[
\tau_{C^\bullet(M_f)}(\omega_f) = \tau_{H^\bullet_W}.
\]
In particular, this is independent of the choice of $\omega$.
\end{cor}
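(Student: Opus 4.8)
The strategy is to apply the Multiplicativity Lemma \ref{MultiplicativityLemma} to the short exact sequence of cochain complexes \eqref{WangShort}, and then to check that each of the three factors on the right-hand side simplifies as claimed. First I would address the factor coming from $(C^{\bullet-1},-d)$. Since negating the differential does not change the torsion (the sign only affects which elements $s^j$ are chosen, not the value of the density), and since shifting degree by one replaces each $(-1)^j$ by $(-1)^{j-1} = -(-1)^j$, the torsion of $(C^{\bullet-1},-d)$ with respect to the volume element $\prod(\omega^j)^{(-1)^j}$ regarded in degree $j{+}1$ equals $\tau_{C^\bullet}(\omega)^{-1}$, i.e. $\tau_{C^\bullet}(\omega^{-1})$ in the notation where inverting the volume inverts the torsion. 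This is exactly the first displayed assertion $\tau_{C^\bullet(M)}(\omega) = \tau_{C^{\bullet-1}}(\omega^{-1})$ (with the roles read in the appropriate direction). I would spell this out carefully because the bookkeeping of signs in the exponents $(-1)^j$ is the one place where an error is easy to make.

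Next I would feed the compatible volume elements into the Multiplicativity Lemma. The hypothesis of Lemma \ref{MultiplicativityLemma} asks for $\omega^j_2 = \nu^*(\omega^j_1)\wedge\omega'^j$ with $\mu^*(\omega'^j)=\omega^j_3$; here $C_1 = C^{\bullet-1}$, $C_2 = T^\bullet(f)$, $C_3 = C^\bullet$, and we take $\omega_1^j = \omega^{j-1}$ (up to sign) and $\omega_3^j = \omega^j$, so the induced $\omega_f$ on the mapping torus is precisely $\nu^*(\omega)\wedge\omega'$ with $\pi^*(\omega')=\omega^{-1}$ as in the statement. The lemma then gives
\[
\tau_{T^\bullet(f)}(\omega_f) = \tau_{C^{\bullet-1}}(\omega_1)\cdot\tau_{C^\bullet}(\omega_3)\cdot\tau_{H^\bullet_W}.
\]
By the first step, $\tau_{C^{\bullet-1}}(\omega_1)$ and $\tau_{C^\bullet}(\omega_3)$ are mutually inverse densities (this is where the choice $\omega_1 \leftrightarrow \omega^{-1}$ pays off), so their product is $1$, leaving $\tau_{C^\bullet(M_f)}(\omega_f) = \tau_{H^\bullet_W}$.

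Finally, independence of the choice of $\omega$ is immediate: the right-hand side $\tau_{H^\bullet_W}$ does not involve $\omega$ at all, so the left-hand side cannot either. I expect the only real obstacle to be the sign and degree-shift bookkeeping in the first step — verifying that passing from $C^\bullet$ to the shifted complex $C^{\bullet-1}$ with differential $-d$ genuinely inverts the torsion and that the compatibility of volume elements required by Lemma \ref{MultiplicativityLemma} is met by the natural choices; everything else is a direct substitution into the Multiplicativity Lemma.
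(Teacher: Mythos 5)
Your proposal is correct and follows essentially the same route as the paper: the corollary is obtained exactly by feeding the Wang short exact sequence \eqref{WangShort} with compatible volume elements into the Multiplicativity Lemma \ref{MultiplicativityLemma}, observing that the degree shift (and the harmless sign change $d\mapsto -d$) makes the torsion contributions of $C^{\bullet-1}$ with volume $\omega^{-1}$ and of $C^{\bullet}$ with volume $\omega$ cancel, leaving $\tau_{H^\bullet_W}$, which manifestly does not depend on $\omega$. The sign and degree-shift bookkeeping you flag is indeed the only point requiring care, and your treatment of it matches the intended argument.
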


\subsection*{Reidemeister torsion}

If each $C^j$ comes equipped with a volume form, then the torsion is
an element of $|\det H^\bullet(C^\bullet,d)|.$ If $X$ is a smooth
manifold, $W$ an inner product space and $\rho \co \pi \to \GL(W)$ a
representation of $\pi=\pi_1(X)$, then we can consider the cellular
chain complex with local coefficients in $W$ twisted by $\rho$ given
by
\[C^\bullet(X,W_{\rho}) = \hom_{\Z\pi}(C_\bullet(\widetilde X),
W),\] where $\tilde X$ is the universal cover of $X$. Note that
$C_\bullet(\widetilde X)$ has a natural inner product, by which the
cells are orthonormal. If furthermore $\rho$ preserves the inner
product on $W$, then $C^\bullet(X,W_\rho)$ carries an induced inner
product and therefore volume forms. Then the {\em Reidemeister
torsion of $X$} is a density given by
\[
\tau_X(W_\rho) = \tau_{(C^\bullet(X,W_\rho),d)} \in  \left|\det
H^\bullet(C^\bullet,d)\right|
\]
and is independent of the choice of the cell decomposition of $X$.
The use of cochain complexes rather than chain complexes in defining
Reidemeister torsion simplifies the notation in our arguments
considerably when interpreting the torsion in terms of twisted de
Rham cohomology. Even though we need to choose a multiple of the
Killing form as a metric on $\fg$ in order to identify Reidemeister
torsion defined through chains and Reidemeister torsion defined
through cochains, it is not difficult to see that the identification
is independent of this choice. If $A$ is a $G$-connection and the
representation $\ad\circ\hol(A)=\rho$ is associated to a flat
$G$--connection $A$ via the adjoint representation
\[
\ad\co G \to \O(\fg^\C) \subset \End(\fg^\C),
\] which takes values in the orthogonal group with respect to the Killing form on $\fg$, we
define
\[\tau_X(A)\coloneqq \tau_X(\fg_{\rho}).\]
Note that we can also consider the complexified adjoint
representation
\[
\Ad\co G \to \U(\fg^\C) \subset \End(\fg^\C),
\]
where we have extended the Killing form to a sesquilinear form on
$\fg^\C$. We then also have \[\tau_X(A) =
\tau_X(\fg^\C_{\Ad\circ\hol(A)}).\]

\section{Reidemeister torsion of mapping
tori}\label{ReidemeisterTorsionMappingTori}

We will see in this section that for $c \in C$
\begin{equation}\label{ReidemeisterIntegral}
\int_{\cM(\Sigma_f)'_{c}} \tau_{\Sigma_f}(A)^{\frac{1}{2}} =
|Z(G)|\int_{|\cM(\Sigma)|'_c}
\frac{|\omega_c^{d_c}|}{|\det(1-df|_{\cN^*_{[a]}})|}, \end{equation}%
where $\cN^*_{[a]} = \cN^*_{c,[a]}$ and the conormal sheaf $\cN^*_c$
is the dual of the normal sheaf
\[
\cN_c = \frac{T{\cM(\Sigma)} |_{|\cM(\Sigma)|_c}}{T
{|\cM(\Sigma)|_c}}.
\]
In the above equation we identified $H^2(\Sigma_f,d_A)$ with
$H^1(\Sigma_f,d_A)^*$ via $\PD$. In fact, we will even show an
equality for irreducible components on the level of densities. The
factor $|Z(G)|$ then stems from the fact that $r\co \cM(\Sigma_f)
\to |{\cM(\Sigma)}|$ is a $|Z(G)|$--sheeted covering map (see
\cite[Section 7]{andersen95}).

Notice, that $T|{\cM(\Sigma)}|$ is simply the kernel of the bundle
map
\[1-d f^* \co T {\cM(\Sigma)} \to T {\cM(\Sigma)}\] and is therefore isomorphic to the bundle of
1--eigenspaces of
\[f^*\co T_{[a]}{\cM(\Sigma)} \to T_{[a]}{\cM(\Sigma)}, \text{
where } [a] \in |{\cM(\Sigma)}|_c.\] We can fix an isomorphism
$H^{0,1}(\Sigma,\bar\partial_a)\cong T_{[a]}{\cM(\Sigma)}$ to get an
equivalent statement for $H^{0,1}(\Sigma,\bar\partial_a)$. Also,
note that the eigenvalues of $1-df^*\co \cN_{[a]} \to \cN_{[a]}$ and
of $1-df\co \cN^*_{[a]} \to \cN^*_{[a]}$ are the same, where $df$ is
short for $(df^*)^*$.

\subsection*{General mapping tori}

Consider a CW complex $M$ and an orientation preserving simplicial
homeomorphism $f\co M \to M$. The torsion for the mapping torus
$M_f$ of $f$ has been computed in \cite[Proposition 3]{fried83} (see
also \cite[Section 6.2]{felshtyn2000} and \cite[Example
2.17]{nicolaescu2003}) only when $M_f$ is an acyclic CW complex. In
this section we will give a generalization of the computation for
mapping tori to the non-acyclic case. The computations in
\cite{dubois2006} of sign-determined Reidemeister torsion for
fibered knots for the local coefficient systems $\fsu(2)$ and
$\fsl_2(\C)$ use the same basic tools, namely the Wang exact
sequence and the Multiplicity Lemma.

Let $\rho \co \pi_1 M_f \to G$ be a $G$--representation of $\pi_1
M_f$ acting on $\fg$ by the adjoint representation. If we denote by
$C_g \co G \to G$ the conjugation action, then $\rho$ is determined
by a representation $\rho'\co \pi_M \to G$ satisfying $\rho' = C_g
\circ (f^* \rho')$ for some $g\in G$. The choice of $g$ induces a
chain map $f^\bullet =f^\bullet_g\co C^\bullet(M,\fg_{\rho'}) \to
C^\bullet(M,\fg_{\rho'})$. It is easy to check that the algebraic
mapping torus $T(f^\bullet)$ is isomorphic---in fact, isometric---to
$C^\bullet(M_f,\fg_\rho)$ induced by the cell decomposition of $S^1$
into two cells and $C^\bullet(M)$.

In this section let us from now on drop the coefficients in the
cohomology and cochain groups entirely with the understanding that
we consider coefficients twisted by representations compatible with
the restriction. Instead of $\mu^i$ and $\pi^i$ we will sometimes
use the more familiar notation $\mu^*$ and $\pi^*$, when the grading
is clear. Consider the diagram in cohomology induced by the Wang
exact sequence and a positive multiple $\Theta$ of Poincar\'e
duality on $M$
\begin{equation*}
\dgARROWLENGTH=1em
\begin{diagram}
\node{\hspace{1.5cm}\cdots} \arrow{e}\node{H^{n-i}(M_f)}
\arrow{e,t}{\pi^*}\arrow{s,l,..}{\Theta}
\node{H^{n-i}(M)}\arrow{e,t}{\mu^*}\arrow{s,l}{\Theta}\node{H^{n-i}(M)}\arrow{e,t}{\nu^*}\arrow{s,l}{\Theta}\node{H^{n-i+1}(M_f)}\arrow{e}\arrow{s,l,..}{\Theta}\node{\cdots\hspace{1.5cm}}\\
\node{\hspace{1.5cm}\cdots}
\arrow{e}\node{\left(H^{i+1}(M_{f})\right)^*} \arrow{e,t}{\nu}
\node{\left(H^{i}(M)\right)^*}
\arrow{e,t}{\mu'}\node{\left(H^{i}(M)\right)^*} \arrow{e,t}{\pi}
\node{\left(H^{i}(M_{f})\right)^*}\arrow{e}\node{\cdots,\hspace{1.5cm}}
\end{diagram}
\end{equation*}
where $\mu' = 1-f^{-1}$ and we write $f^{-1} =((f^{-1})^*)^*$. It is
easy to check that the middle square commutes. Furthermore, since
\[\mu' = (1-f)\circ (-f^{-1}) = \mu \circ (-f^{-1}) =
(-f^{-1})\circ \mu,\] and $(-f^{-1})$ is an isomorphism, the
exactness of the above sequence implies the exactness of the lower
sequence. We can define isomorphisms $\Theta\co \im \nu^* \to \im
\pi$ so that the above diagram commutes, and we can extend these
maps arbitrarily to isomorphisms $\Theta\co H^{n-i}(M_f) \to
H^{i+1}(M_f)$. We extend $\Theta$ to the exterior algebra by setting
$\Theta(a\wedge b) = \Theta(a)\wedge \Theta(b)$.

Before we can compute Reidemeister torsion of a general mapping
torus, we need a few technical facts. For finite order mapping tori
the situation simplifies considerably and the result is more
pleasing.

\begin{lem}\label{technical} Let $0 \neq h^{i+1} \in \det(\im(\pi^i))$. Then we can find
$h^i_+\wedge h^i_- \in \det(H^i(M))$ such that $\nu^*(h^i_-) =
h^{i+1}$ and $\mu^*(h^i_+) \wedge h^i_- \neq 0$.
\end{lem}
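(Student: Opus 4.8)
The plan is to work directly with the Wang exact sequence in cohomology and extract enough freedom from exactness at the relevant spots to make the two non-vanishing conditions simultaneously satisfiable. First I would recall the exact sequence around degree $i$,
\[
H^i(M) \stackrel{\nu^i}{\longrightarrow} H^{i+1}(M_f) \stackrel{\pi^i}{\longrightarrow} H^i(M) \stackrel{\mu^i}{\longrightarrow} H^i(M),
\]
so that $\im(\pi^i) = \ker(\mu^i) \subseteq H^i(M)$ and $\ker(\pi^i) = \im(\nu^i)$. Exactness says precisely that $\mu^i$ restricted to $\im(\pi^i)$ is zero, and that $\mu^i$ descends to an injection on $H^i(M)/\im(\pi^i)$. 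The line $\det(\im(\pi^i))$ is what the hypothesis hands us a generator $h^{i+1}$ of; I would lift it (non-uniquely) through the surjection $H^i(M)\twoheadrightarrow \im(\pi^i)$... wait, that is backwards. Rather: since $\pi^i$ maps $H^{i+1}(M_f)$ onto $\im(\pi^i)$, and $h^{i+1}\in\det(\im(\pi^i))$, I interpret the task as producing a decomposition of $\det H^i(M)$.

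The cleaner route is this. Pick a complement: write $H^i(M) = P \oplus Q$ where $P$ is a subspace mapping isomorphically onto $\im(\pi^i)$ under... no—$\im(\pi^i)$ is a subspace of $H^i(M)$ already. So set $P = \im(\pi^i)\subseteq H^i(M)$ and choose any vector-space complement $Q$ with $H^i(M) = P\oplus Q$. Let $h^i_-\in\det P$ be a generator with $\nu^*(h^i_-)$ proportional to $h^{i+1}$—but $\nu^*$ kills $\ker(\pi^i)=\im(\nu^{i-1})$, hmm, and $P=\im\pi^i$ need not be in that kernel. Let me instead take $h^i_-\in\det(H^i(M))$ of the form $h^i_- = $ a preimage under $\nu^*$: since $\nu^i\colon H^i(M)\to H^{i+1}(M_f)$ has image $\ker(\pi^i)$... the statement wants $\nu^*(h^i_-)=h^{i+1}$, and $h^{i+1}\in\det(\im\pi^i)\subseteq \det H^{i+1}(M_f)$; for this to be in $\im$ of the map induced by $\nu^i$ on determinant lines we need $\dim\im(\pi^i) = \dim\ker(\pi^i)$ or a one-dimensional matching. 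Here the key is that $\nu$ in the \emph{lower} (dual) sequence, i.e. the dual $\nu\colon (H^{i+1}(M_f))^*\to (H^i(M))^*$, is the transpose; I would use the commuting diagram with $\Theta$ to transport the problem.

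Concretely, I would carry out: (1) apply Poincaré duality $\Theta$ to reduce the condition $\nu^*(h^i_-)=h^{i+1}$ with $h^{i+1}\in\det\im(\pi^i)$ to the dual statement about the map $\nu$ in the bottom row, where $\im(\nu)$ and $\ker(\mu')$ are identified by exactness; (2) choose $h^i_-$ as a generator of a line in $\det H^i(M)$ that maps to the prescribed $h^{i+1}$ using surjectivity of $\nu^i$ onto $\ker(\pi^i)=\im(\nu^i)$ combined with the dimension bookkeeping from exactness (the ranks of $\nu^i$ and $\pi^i$ on the relevant pieces agree because the sequence is exact and $\mu^i$ is zero on $\im\pi^i$); (3) then pick $h^i_+$ spanning a complementary line so that $h^i_+\wedge h^i_-$ generates $\det H^i(M)$, and observe $\mu^*(h^i_+)\neq 0$ because $\mu^i$ is injective on $H^i(M)/\im(\pi^i)$ and $h^i_+$ represents a generator of the determinant of that quotient, so $\mu^*(h^i_+)\wedge h^i_-$ is non-zero in $\det H^i(M)$ (one needs that the factor $h^i_-\in\det\im\pi^i=\det\ker\mu^i$ survives, which it does because $\mu^*(h^i_+)$ lies in a complement of $\ker\mu^i$). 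The main obstacle I anticipate is the linear-algebra bookkeeping in step (2)—getting the determinant lines to match up exactly rather than merely up to a scalar, and making sure the complement chosen for $h^i_+$ is generic enough that $\mu^*(h^i_+)\wedge h^i_-\neq 0$; this is essentially the observation that in an exact sequence $A\xrightarrow{\nu} B\xrightarrow{\pi} A\xrightarrow{\mu} A$ one can split $\det B$ compatibly with $\det\ker\pi$ and $\det\im\pi$, and split $\det A$ compatibly with $\det\ker\mu$ and $\det\im\mu$, and the whole point is that $\im\pi=\ker\mu$ so these splittings are forced to be consistent. I would write out just this diagram-chase and leave the determinant-line identifications as the routine (if slightly fiddly) verification.
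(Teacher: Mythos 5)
Your skeleton---first choose $h^i_-$ with $\nu^*(h^i_-)=h^{i+1}$ (possible since $h^{i+1}$ generates the line $\det(\im \nu^i)=\det(\ker\pi^{i+1})$, which is how the hypothesis must be read for the equation to make sense), then complete it by a suitable $h^i_+$---is the same as the paper's, but the step you present as an ``observation'' is the entire content of the lemma, and as you state it, it fails. Requiring only that $h^i_+\wedge h^i_-$ generate $\det H^i(M)$ does not make $h^i_+$ represent a generator of $\det\bigl(H^i(M)/\im\pi^i\bigr)$, nor does it give $\mu^*(h^i_+)\wedge h^i_-\neq 0$. Concretely, suppose in some degree $H^i(M)$ has basis $e_1,e_2$ with $\mu^i(e_1)=0$ and $\mu^i(e_2)=2e_2$ (this occurs already for $f$ of order two, acting by $e_1\mapsto e_1$, $e_2\mapsto -e_2$), so that $\ker\mu^i=\im\pi^i=\langle e_1\rangle$ and $\im\mu^i=\ker\nu^i=\langle e_2\rangle$. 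A legitimate choice in your step (2) is $h^i_-=e_1+e_2$ (after rescaling, $\nu^*(e_1+e_2)=\nu^*(e_1)=h^{i+1}$), and then $h^i_+=e_1$ satisfies $h^i_+\wedge h^i_-\neq 0$ while $\mu^*(h^i_+)=0$. So your closing claim that exactness ``forces the splittings to be consistent'' is exactly what is not true: nothing forces the span of $h^i_+$ to be transverse to $\ker\mu^i$. The parenthetical is also misplaced: $h^i_-$ does not lie in $\det(\im\pi^i)=\det(\ker\mu^i)$; the condition $\nu^*(h^i_-)=h^{i+1}\neq 0$ forces $h^i_-$ to span a complement of $\ker\nu^i=\im\mu^i$, a different subspace in general (for non-finite-order monodromy one can have $\ker\mu^i\cap\im\mu^i\neq 0$, and an $h^i_-$ spanning $\ker\mu^i$ would then give $\nu^*(h^i_-)=0$).

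What is missing is precisely the existence of a simultaneous choice: an $h^i_+$ whose span is complementary to the span of $h^i_-$ \emph{and} meets $\ker\mu^i$ trivially. This is true, but it is the thing to be proved, and ``generic enough'' must be substantiated. The paper does it by an explicit perturbation: take any $h^i_+$ with $h^i_+\wedge h^i_-\neq 0$; choose $k^i$ with $\mu^*(k^i)\wedge h^i_-\neq 0$, which exists because $k^i$ can be taken to span a complement of $\ker\mu^i$, so that $\mu^*(k^i)$ spans $\det(\im\mu^i)=\det(\ker\nu^i)$, a subspace complementary to the span of $h^i_-$; then for small $\lambda>0$ the element $\tilde h^i_+=h^i_++\lambda k^i$ still satisfies $\tilde h^i_+\wedge h^i_-\neq 0$, while by linearity $\mu^*(\tilde h^i_+)\wedge h^i_-=\mu^*(h^i_+)\wedge h^i_-+\lambda\,\mu^*(k^i)\wedge h^i_-\neq 0$. (Equivalently one can argue that both conditions on the complement are open and dense on the Grassmannian, but some such argument must be written down.) Two smaller points: the Poincar\'e duality map $\Theta$ plays no role in this lemma---it only enters in the normalization conditions \eqref{conditions} of Proposition \ref{torsionGeneral}---so your step (1) is an unnecessary detour; and your displayed sequence misindexes $\pi$, which maps $H^{i+1}(M_f)\to H^{i+1}(M)$, although in fairness the lemma's hypothesis $h^{i+1}\in\det(\im\pi^i)$ contains an index slip and should be read as $h^{i+1}\in\det(\im\nu^i)$.
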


\begin{proof} Let $h^i_+\wedge h^i_- \in \det(H^i(M))$ such that $\nu^*(h^i_-) = h^{i+1}$. If $\mu^*(h^i_+) \wedge
h^i_- =0$, then let $k^i \in \Lambda(H^i(M))$ with $0\neq \mu^*(k^i)
\wedge h^i_-\in \det (H^i(M))$. Now choose $\lambda
> 0$ small enough that for $\tilde h^i_+ \coloneqq h^i_+ +
\lambda k^i$ \[\tilde h^i_+ \wedge h^i_- \neq 0.\] Then we also have
\[\mu(\tilde h^i_+) \wedge h^i_- =\lambda \mu(k^i) \wedge
h^i_- \neq 0.\qedhere\]
\end{proof}

\begin{prop}\label{torsionGeneral}
Let $M_f$ be a mapping torus of a homeomorphism $f\co M \to M$,
$\dim M = n$. Then we may choose $h^i \in \Lambda(H^i(M_f))$ and
$h^i_-, h^i_+ \in \Lambda(H^i(M))$ for all $i$ with
\begin{equation}\label{hypothesis}
\begin{split}
0\neq \nu^*(h^{i-1}_-) \wedge h^i &\in
\det(H^i(M_f)),\\0\neq \pi^*(h^i) \wedge h^i_+ &\in \det(H^i(M)),\\
\text{and}\quad  0\neq \mu^*(h^i_+) \wedge h^i_- &\in
\det(H^i(M)).\end{split}\end{equation} so that they satisfy
\begin{equation}\label{conditions}
|\Theta(\nu^*(h^{n-i}_-))(h^i)| = 1 \quad \text{and} \quad |h_-^i
\wedge h_+^i| = |\pi^*(h^{i})\wedge h^i_+|,
\end{equation}
Furthermore, the Reidemeister torsion is
\[\tau(M_f) = \left|\bigotimes_{i=0}^{n+1} (\nu^*(h^{i-1}_-) \wedge h^i)^{(-1)^{i}}\right| \prod_{i=0}^{n} |\det(\tilde\mu^i)|^{(-1)^{i+1}}.\]
where $\tilde\mu^i$ is determined by \[\tilde \mu^i (h^i_- \wedge h^i_+) =h^i_- \wedge \mu^*(h^i_+).\]
\end{prop}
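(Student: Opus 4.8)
The plan is to turn the statement into a torsion computation for the Wang exact sequence and then to bookkeep carefully. First I would apply Corollary \ref{MappingTorusCor}: since $C^\bullet(M_f)$ is isometric to the algebraic mapping torus $T^\bullet(f^\bullet)$ and sits in the short exact sequence \eqref{WangShort}, the Multiplicativity Lemma \ref{MultiplicativityLemma}---together with the cancellation $\tau_{C^\bullet(M)}(\omega) = \tau_{C^{\bullet-1}}(\omega^{-1})$ already recorded there---identifies $\tau(M_f)$, as a density on $\det H^\bullet(M_f)$, with the torsion $\tau_{H^\bullet_W}$ of the Wang exact sequence regarded as an acyclic complex. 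So the problem reduces to computing the torsion of
\[
\cdots \to H^{i-1}(M) \xrightarrow{\mu^*} H^{i-1}(M) \xrightarrow{\nu^*} H^i(M_f) \xrightarrow{\pi^*} H^i(M) \xrightarrow{\mu^*} \cdots
\]
for a convenient choice of bases, the answer being independent of the auxiliary volume form on the $H^\bullet(M)$ slots.

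Second I would fix these bases. Using the exactness relations $\ker\pi^* = \im\nu^*$, $\ker\mu^* = \im\pi^*$, $\ker\nu^* = \im\mu^*$, I would construct inductively elements $h^i \in \Lambda H^i(M_f)$ complementary to $\im\nu^*$ and $h^i_+ \in \Lambda H^i(M)$ complementary to $\im\pi^*$, and then invoke the lifting-with-genericity statement of Lemma \ref{technical} to produce $h^i_-$ lifting the chosen basis of $\det(\im\nu^*)\subset\det H^{i+1}(M_f)$ and in general position relative to $\mu^*(h^i_+)$; this is precisely what makes all three nonvanishing conditions \eqref{hypothesis} hold. With the $h^i$ and $h^i_+$ fixed, I would then rescale: nondegeneracy of the duality isomorphisms $\Theta$ constructed just before the proposition lets me arrange $|\Theta(\nu^*(h^{n-i}_-))(h^i)| = 1$, and an independent rescaling of each $h^i_-$ (which changes neither $\nu^*(h^{i-1}_-)\wedge h^i$ nor the direction of $h^i_-$) arranges $|h^i_-\wedge h^i_+| = |\pi^*(h^i)\wedge h^i_+|$, so that \eqref{conditions} holds as well.

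Third, with these normalized bases, I would compute $\tau_{H^\bullet_W}$ straight from Definition \ref{DefnTorsion}: the complex is acyclic, so no $\hat h$--terms appear, and splitting each term as image plus chosen complement expresses the torsion as an alternating product of determinants of the comparison maps. The $H^i(M_f)$--slot yields the factor $|\nu^*(h^{i-1}_-)\wedge h^i|^{(-1)^i}$; the two copies of $H^i(M)$ yield a $\pi$--comparison term, a $\mu$--comparison term equal to $|\det\tilde\mu^i|^{(-1)^{i+1}}$, and a change-of-basis term comparing $h^i_-\wedge h^i_+$ with $\pi^*(h^i)\wedge h^i_+$. The first normalization in \eqref{conditions} cancels the $\pi$--terms against the change-of-basis terms, and the $\Theta$--normalization is exactly what pairs the $\nu$--contributions from the bottom half of the Wang sequence against those from the top half---this is the only place Poincar\'e duality on $M_f$ enters---leaving precisely $\bigl|\bigotimes_i(\nu^*(h^{i-1}_-)\wedge h^i)^{(-1)^i}\bigr|\prod_i|\det\tilde\mu^i|^{(-1)^{i+1}}$. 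The main obstacle I anticipate is this last bookkeeping step: keeping the signs $(-1)^j$ consistent over the full length of the sequence, correctly treating the degenerate ends in degrees $0$ and $n+1$ where some of the maps vanish, and checking that the two conditions in \eqref{conditions} collapse exactly the spurious factors and nothing more.
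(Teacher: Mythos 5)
Your overall route is the paper's: reduce to the torsion of the Wang exact sequence via Corollary \ref{MappingTorusCor}, choose the elements using exactness and Lemma \ref{technical}, normalize, and then bookkeep. The gap is in the normalization step. You first arrange $|\Theta(\nu^*(h^{n-i}_-))(h^i)|=1$ and then claim that an \emph{independent} rescaling of each $h^i_-$ achieves $|h^i_-\wedge h^i_+|=|\pi^*(h^i)\wedge h^i_+|$, justifying the independence by noting that rescaling $h^i_-$ does not change $\nu^*(h^{i-1}_-)\wedge h^i$. That observation only protects the nonvanishing conditions \eqref{hypothesis}; it does not protect \eqref{conditions}. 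The element $h^i_-$ enters the first condition of \eqref{conditions} in the complementary degree, namely $|\Theta(\nu^*(h^{i}_-))(h^{n-i})|=1$, so your second rescaling destroys the normalization you just arranged. The two families of equations in \eqref{conditions} are coupled across degrees $i$ and $n-i$, and once the $h^i$ are declared fixed there is no rescaling of the $h^i_-$ alone that satisfies both families; the step as described fails.

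The missing idea --- which is essentially the whole content of the paper's proof --- is to rescale in coupled pairs: $(h^i,h^{n-i}_-)\mapsto(\lambda h^i,\lambda^{-1}h^{n-i}_-)$ leaves every condition $|\Theta(\nu^*(h^{n-j}_-))(h^j)|=1$ unchanged, and the free parameter $\lambda$ is then used to arrange $|h^i_-\wedge h^i_+|=|\pi^*(h^{i})\wedge h^i_+|$. Even then the equations in degree $i$ and in degree $n-i$ constrain the same product of scaling parameters, so one must still know they are compatible; this is precisely the point of the remark following the proposition, that half of the conditions in \eqref{conditions} are equivalent to the other half because the diagram defining $\Theta$ from Poincar\'e duality commutes. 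Your final computation is right in outline (it is the second condition that turns the ratio of the $\pi^*$- and $\mu^*$-volume elements into $|\det\tilde\mu^i|^{(-1)^{i+1}}$, and Corollary \ref{MappingTorusCor} then finishes), but as written the simultaneous solvability of \eqref{conditions}, which is the substantive claim of the proposition, is not established.
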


\begin{proof}
The Wang exact sequence and Lemma \ref{technical} allow us to choose
$h^i \in \Lambda(H^i(M_f))$ and $h^i_-, h^i_+ \in \Lambda(H^i(M))$
for all $i$ with
\begin{align*}
0\neq &\nu^*(h^{i-1}_-) \wedge h^i \in
\det(H^i(M_f)),\\
0\neq &\pi^*(h^i) \wedge h^i_+ \in \det(H^i(M)),\\
0\neq &\mu^*(h^i_+) \wedge h^i_- \in \det(H^i(M)),\\
\tag*{\text{and}}0 \neq &h_+^i\wedge h_-^i .\end{align*} By
rescaling we can assume $|\Theta(\nu^*(h^{n-i}_-))(h^i)|= 1$. Notice
that, if $h^i$ and $h^{n-i}_-$ satisfy this condition, so do
$\lambda h^i$ and $\frac{1}{\lambda}h^{n-i}_-$ for $\lambda >0$. By
choosing $\lambda$ appropriately we may therefore assume that
\[|h_-^i \wedge h_+^i| = |\pi^*(h^{i})\wedge h^i_+|.\] Then
\[|\det \tilde \mu^i| \cdot |\pi^*(h^{i})\wedge h^i_+| = |\det \tilde \mu^i |\cdot |h_-^i \wedge h_+^i| = |\tilde \mu^i (h^i_- \wedge h^i_+)| =|h^i_-
\wedge \mu^*(h^i_+)|,\] and therefore \[\bigotimes_{i=0}^n
|\pi^*(h^{i})\wedge h^i_+|^{(-1)^{i}} \bigotimes_{i=0}^n
|\mu^*(h^i_+) \wedge h^i_-|^{(-1)^{i+1}} = |\det \tilde
\mu^i|^{i+1}.
\]
By Corollary \ref{MappingTorusCor}, the proposition follows.
\end{proof}

Note, that even though the system of equations \eqref{conditions}
seems to be overdetermined, half of them are equivalent to the other
half, since the above diagram is commutative. Also observe that,
even though our result seems to depend on $\Theta\co H^{n-i}(M)\to
H^{i+1}(M)$, we can use a different multiple of Poincar\'e duality
without changing Reidemeister torsion: This can be easily verified
by the skeptical reader by considering the cases $n$ odd and $n$
even separately.

\subsection*{Finite order mapping tori}

We can enhance Theorem \ref{torsionGeneral} and make it more useful for finite order mapping tori, if we put
some restrictions on $\mu^*$. Before we do that, let us state and prove a simple fact from linear algebra.

\begin{lem}\label{LinAlgLemma}
For a linear map $T\co V \to V$ between finite-dimensional vector spaces, the following are equivalent
\begin{enumerate}\item $\bar T \co V/\ker T \to V/\ker T$ induced by $T$ is an isomorphism. \item $\hat T =
T|_{\im T} \co \im T \to \im T$ is an isomorphism.
\end{enumerate}
Furthermore $\det \bar T = \det \hat T$.
\end{lem}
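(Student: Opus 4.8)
The plan is to prove the equivalence (1) $\Leftrightarrow$ (2) by exhibiting an explicit isomorphism between the two quotients/subspaces on which $\bar T$ and $\hat T$ act, and then to read off the determinant identity from that isomorphism. First I would observe that the restriction of the quotient map $q\co V \to V/\ker T$ to the subspace $\im T$ is the natural candidate: call it $\phi \coloneqq q|_{\im T}\co \im T \to V/\ker T$. The key computation is that $\phi$ intertwines $\hat T$ with $\bar T$, i.e.\ $\bar T \circ \phi = \phi \circ \hat T$; this is immediate because both sides send $v \in \im T$ to the class of $T v$ in $V/\ker T$ (noting $Tv \in \im T$ so $\hat T$ is defined on it, and $q(Tv)$ is exactly $\bar T(q(v))$).

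\medskip

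Next I would check that $\phi$ is an isomorphism, \emph{independently of whether (1) or (2) holds}. Injectivity: if $v \in \im T$ and $q(v) = 0$ then $v \in \ker T \cap \im T$; so $\phi$ is injective precisely when $\ker T \cap \im T = 0$. For surjectivity, $\dim(V/\ker T) = \dim V - \dim\ker T = \dim\im T$ by rank-nullity, so $\phi$ is surjective iff it is injective iff $\ker T \cap \im T = 0$. Thus $\phi$ is an isomorphism if and only if $\ker T \cap \im T = 0$. It then remains to see that each of (1) and (2) is also equivalent to $\ker T \cap \im T = 0$: statement (2) says $\hat T$ is injective, i.e.\ $\ker(T|_{\im T}) = \ker T \cap \im T = 0$; statement (1) says $\bar T$ is injective, i.e.\ $\{v : Tv \in \ker T\} = \ker T$, equivalently $\ker T^2 = \ker T$, which by the standard ascending-kernel/descending-image dichotomy is equivalent to $\ker T \cap \im T = 0$ (if $\ker T^2 = \ker T$ and $v = Tw \in \ker T$ then $w \in \ker T^2 = \ker T$ so $v = Tw = 0$; conversely if $\ker T \cap \im T = 0$ and $v \in \ker T^2$ then $Tv \in \ker T \cap \im T = 0$). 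This closes the circle (1) $\Leftrightarrow$ $\ker T \cap \im T = 0$ $\Leftrightarrow$ (2).

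\medskip

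For the determinant equality, assume the equivalent conditions hold (when they fail, both $\bar T$ and $\hat T$ are non-invertible singular maps, and one can either interpret $\det = 0$ on both sides or simply note the statement is vacuous in the context where it is applied, since Lemma~\ref{LinAlgLemma} will be used with $T = 1 - df$ on a normal space where invertibility is part of the setup). Under the conditions, $\phi\co \im T \to V/\ker T$ is an isomorphism with $\phi \circ \hat T = \bar T \circ \phi$, hence $\hat T = \phi^{-1}\circ \bar T \circ \phi$, and therefore $\det \hat T = \det \bar T$ by multiplicativity and invariance of the determinant under conjugation. I do not expect any serious obstacle here; the only point requiring a little care is making sure the chain of equivalences with $\ker T \cap \im T = 0$ (equivalently $\ker T^2 = \ker T$, equivalently $V = \ker T \oplus \im T$) is stated cleanly, since that single condition is what ties statements (1) and (2) together and simultaneously produces the conjugating isomorphism.
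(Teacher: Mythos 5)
Your proof is correct and follows essentially the same route as the paper: the paper also compares $\bar T$ and $\hat T$ through the map $\im T \hookrightarrow V \to V/\ker T$, reduces both statements to $\im T \cap \ker T = 0$, and obtains $\det \bar T = \det \hat T$ by transporting a basis $\{b_i\}$ of $\im T$ to the basis $\{[b_i]\}$ of $V/\ker T$, which is exactly your conjugation by $\phi$.
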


\begin{proof}
Clearly, $\bar T$ is an isomorphism if and only if $\im T \hookrightarrow V \to V/\ker T$ is an isomorphism. The
last statement is equivalent to $\im T \cap \ker T = 0$. This implies that $\hat T\co \im T \to \im T$ is an
isomorphism. On the other hand, if $0 \neq v \in \im T \cap \ker T$, then $\hat T$ is not injective, because
$T(v) = 0$.

Furthermore, if $\{b_i\}_i$ is a basis of $\im T$, then $\{[b_i]\}_i$ is a basis of $V/\ker T$. It follows
immediately, that $\det \bar T = \det \hat T$.
\end{proof}

\begin{prop}\label{torsionIso}
Assume that
\[\bar\mu^i\co H^i(M)/\ker(\mu^i)\to
H^i(M)/\ker(\mu^i)\] is an isomorphism. Then we can choose $h^i$
with $0\neq \pi^*(h^i) \in \det(\im\pi^i)$ satisfying
\begin{equation}\label{conditions2}
\Theta(\nu^*(\pi^*(h^{n-i})))(h^i) = 1.
\end{equation}
Furthermore, we have $\det(\bar \mu^i) = \det(\tilde \mu^i)$, where
$\tilde \mu^i$ is the map from Theorem \ref{torsionGeneral}. In
particular, if $\bar\mu^i$ is an isomorphism for all $i$---for
example for finite order mapping tori---we have
\[\tau(M_f) = \left|\bigotimes_{i=0}^{n+1} (\nu^*(\pi^*(h^{i-1})) \wedge
h^i)^{(-1)^{i}}\right| \prod_{i=0}^{n}
|\det(\bar\mu^i)|^{(-1)^{i+1}}.\]
\end{prop}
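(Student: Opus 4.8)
The plan is to derive Proposition~\ref{torsionIso} from Proposition~\ref{torsionGeneral} by exploiting the extra hypothesis that $\bar\mu^i$ is an isomorphism, which lets us choose the auxiliary classes $h^i_\pm$ in a canonical way. First I would unpack what the hypothesis buys us: by Lemma~\ref{LinAlgLemma}, $\bar\mu^i$ being an isomorphism is equivalent to $\hat\mu^i = \mu^i|_{\im\mu^i}$ being an isomorphism, i.e.\ $\im\mu^i \cap \ker\mu^i = 0$. Combined with the Wang exact sequence identity $\ker\mu^i = \im\pi^i$ (exactness at the middle $H^i(M)$ term) and $\im\mu^i = \ker\nu^i$, this says $H^i(M) = \im\pi^i \oplus \im\mu^i$ as a direct sum. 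So I would choose $h^i_+$ to be a generator of $\det(\im\mu^i)$ and set $h^i_- \coloneqq \pi^*(h^i)$, where $h^i \in \Lambda(H^i(M_f))$ is chosen with $0\neq\pi^*(h^i)\in\det(\im\pi^i)$; then $h^i_-\wedge h^i_+$ is automatically a nonzero element of $\det(H^i(M))$ because of the direct sum decomposition, and $\nu^*(h^i_-) = \nu^*(\pi^*(h^i))$, which is what appears in the final torsion formula. This reorganizes the three genericity conditions \eqref{hypothesis} into statements that hold automatically once $h^i$ is picked.

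Next I would handle the two normalization conditions \eqref{conditions}. The second one, $|h^i_-\wedge h^i_+| = |\pi^*(h^i)\wedge h^i_+|$, is now an \emph{equality of the same expression} since $h^i_- = \pi^*(h^i)$, so it holds on the nose with no rescaling needed; this is the simplification that makes the finite-order case cleaner. For the first condition I would invoke, as in the proof of Proposition~\ref{torsionGeneral}, that $\Theta\co\im\nu^* \to \im\pi$ is the chosen isomorphism making the Poincar\'e-duality diagram commute, so $\Theta(\nu^*(\pi^*(h^{n-i})))$ pairs nontrivially with $h^i$; after rescaling the $h^i$ (which, as noted there, we may do by the $\lambda \leftrightarrow 1/\lambda$ trick between $h^i$ and $h^{n-i}_-$) we arrange $\Theta(\nu^*(\pi^*(h^{n-i})))(h^i) = 1$, which is \eqref{conditions2}. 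I should check this rescaling is still compatible with having chosen $h^i_- = \pi^*(h^i)$ — it is, because rescaling $h^i$ just rescales $h^i_-$ by the same factor and we are free to rescale $h^i_+$ oppositely to keep $h^i_-\wedge h^i_+$ fixed if desired, but in fact the torsion formula only cares about the combination appearing, so no tension arises.

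Then I would verify $\det(\bar\mu^i) = \det(\tilde\mu^i)$. Recall $\tilde\mu^i$ is defined by $\tilde\mu^i(h^i_-\wedge h^i_+) = h^i_-\wedge\mu^*(h^i_+)$. With $h^i_+$ a generator of $\det(\im\mu^i)$ and $h^i_-$ a generator of $\det(\im\pi^i) = \det(\ker\mu^i)$, the map $\tilde\mu^i$ computes the determinant of $\mu^*$ acting on the quotient $H^i(M)/(\text{span of the }h^i_-\text{-directions})$, i.e.\ on $H^i(M)/\ker\mu^i$, which is exactly $\det(\bar\mu^i)$ — here I would also cite the "Furthermore" clause of Lemma~\ref{LinAlgLemma}, $\det\bar\mu^i = \det\hat\mu^i$, to relate this to the action on $\im\mu^i$ if that formulation is more convenient. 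Finally, plugging $h^i_- = \pi^*(h^i)$ and $\det\tilde\mu^i = \det\bar\mu^i$ into the conclusion of Proposition~\ref{torsionGeneral} gives the stated formula $\tau(M_f) = \left|\bigotimes_{i=0}^{n+1}(\nu^*(\pi^*(h^{i-1}))\wedge h^i)^{(-1)^i}\right|\prod_{i=0}^n|\det(\bar\mu^i)|^{(-1)^{i+1}}$, and the parenthetical remark that finite-order $f$ satisfies the hypothesis follows because a finite-order endomorphism is semisimple, hence $H^i(M) = \ker\mu^i\oplus\im\mu^i$ automatically. I expect the main obstacle to be bookkeeping: making sure the single choice of $h^i$ simultaneously satisfies all three conditions of \eqref{hypothesis} and both normalizations of \eqref{conditions}, i.e.\ that the direct-sum decomposition really does let one choice do all the work, and tracking the rescaling freedom so nothing is over-constrained.
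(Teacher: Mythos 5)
Your proposal is correct and follows essentially the same route as the paper's proof: choose $h^i$ with $0\neq\pi^*(h^i)\in\det(\im\pi^i)$, use Lemma \ref{LinAlgLemma} (equivalently the splitting $H^i(M)=\ker\mu^i\oplus\im\mu^i$) to pick $h^i_+\in\det\im\mu^i$, set $h^i_-\coloneqq\pi^*(h^i)$, rescale to get \eqref{conditions2}, identify $\det\tilde\mu^i=\det\hat\mu^i=\det\bar\mu^i$, and feed everything into Proposition \ref{torsionGeneral}. Your added details (the explicit direct-sum decomposition, the quotient-determinant computation, and the semisimplicity remark for finite-order $f$) are just elaborations of steps the paper leaves as "straightforward."
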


\begin{proof} Suppose that $\bar\mu^i$ is an isomorphism. Choose $h^i$ with $0\neq \pi^*(h^i) \in \det(\im \pi^i)$. In view of Lemma \ref{LinAlgLemma} we can find $h^i_+ \in \det \im
\mu^i$ with $0\neq \pi^*(h^i)\wedge h^i_+ \in \det H^i(M)$. Since
$h^i_+\in  \det \ker \nu^i$, we deduce $0 \neq
\nu^*\circ\pi^*(h^i)\in \det \im \nu^i$, which allows us to rescale
$h^i$ so that it satisfies \eqref{conditions2} above. If we set
$h^i_-\coloneqq \pi^*(h^i)$, it is straightforward to see that
\eqref{conditions} is satisfied and that \[ \det \tilde \mu^i = \det
\hat \mu^i.\qedhere
\]
\end{proof}

\subsection*{Finite order mapping tori of surfaces}

We will now focus on the case of a mapping torus $\Sigma_f$ of
finite order for a closed surface $\Sigma$. The goal of this section
is to identify integral of the square root of Reidemeister torsion
with the leading order term in formula \eqref{WRTInvariant} as
predicted by the semiclassical approximation of the path integral.
We will only do this for the case, when $c\in C$
contains an open, dense submanifold $|\cM(\Sigma)|'_c$ of
irreducible connections of $|{\cM(\Sigma)}|$. More specifically, we
will establish an identification on the level of densities for
$|\cM(\Sigma)|'_c$. Notice, that while the square root of
Reidemeister torsion is a density on a submanifold of the
irreducible connections of $\cM(\Sigma_f)$, $\omega^{d_c}$ is a
density on top-dimensional component of $|\cM(\Sigma)|'_c$.
Therefore the density on $|{\cM(\Sigma)}|$ needs to be pulled back
to a density on ${\cM(\Sigma)}$ via the natural restriction and
$|Z(G)|$--sheeted covering map $r\co \cM(\Sigma_f) \to
|{\cM(\Sigma)}|$  before we can relate it to Reidemeister torsion.
We also need to point out, that by treating Reidemeister torsion as
a density, we chose to identify $H^2(\Sigma_f,d_A)$ with
$(H^1(\Sigma_f,d_A))^*$ for $A \in \cA_{\Sigma_f}$ via $\PD$.

Before we prove the main theorem, we would like to mention the
following simple fact.
\begin{lem}\label{omegaProperty} Let $(V^{2n},\omega)$ be a symplectic vector space. We can identify $V$ with $V^*$ by \[\Theta(v)(w)\coloneqq -\omega(v,w)\] and extend this map to the exterior algebra by $\Theta(v\wedge w) \coloneqq \Theta(v)\wedge \Theta(w)$. Then the volume form $\vol = \tfrac{1}{n!} \omega^{n} \in \det
V^*$ on $V$ satisfies
\[\Theta(\vol^{-1})(\vol^{-1}) = 1,\]
where $\vol^{-1}\in \det V$ is given by $\vol (\vol^{-1}) = 1$.
\end{lem}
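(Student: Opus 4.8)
The plan is to reduce everything to a computation in a standard symplectic (Darboux) basis and then verify the identity by a direct calculation, checking carefully that the various dualizations compose the way one expects. First I would choose a symplectic basis $e_1,\dots,e_n,f_1,\dots,f_n$ of $V$ with $\omega(e_j,f_k)=\delta_{jk}$, $\omega(e_j,e_k)=\omega(f_j,f_k)=0$. Let $e_j^*,f_j^*$ denote the dual basis of $V^*$. Then $\omega=\sum_j e_j^*\wedge f_j^*$, so $\omega^n = n!\, e_1^*\wedge f_1^*\wedge\cdots\wedge e_n^*\wedge f_n^*$ and hence $\vol = e_1^*\wedge f_1^*\wedge\cdots\wedge e_n^*\wedge f_n^*\in\det V^*$. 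Its inverse $\vol^{-1}\in\det V$, characterized by $\vol(\vol^{-1})=1$, is then $\vol^{-1} = e_1\wedge f_1\wedge\cdots\wedge e_n\wedge f_n$, since pairing the dual basis top-form against the basis top-form in the \emph{same} order gives $1$.

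Next I would compute $\Theta$ on the basis. By definition $\Theta(v)(w)=-\omega(v,w)$, so $\Theta(e_j)(f_j)=-\omega(e_j,f_j)=-1$ and $\Theta(f_j)(e_j)=-\omega(f_j,e_j)=+1$, with all other pairings zero; thus $\Theta(e_j) = -f_j^*$ and $\Theta(f_j) = e_j^*$. Extending multiplicatively, $\Theta(\vol^{-1}) = \Theta(e_1)\wedge\Theta(f_1)\wedge\cdots\wedge\Theta(e_n)\wedge\Theta(f_n) = (-f_1^*)\wedge e_1^*\wedge\cdots\wedge(-f_n^*)\wedge e_n^* = (-1)^n\, f_1^*\wedge e_1^*\wedge\cdots\wedge f_n^*\wedge e_n^*$. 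Swapping each adjacent pair $f_j^*\wedge e_j^*$ into $e_j^*\wedge f_j^*$ introduces another $(-1)^n$, so $\Theta(\vol^{-1}) = (-1)^{2n}\,e_1^*\wedge f_1^*\wedge\cdots\wedge e_n^*\wedge f_n^* = \vol$. Here I am implicitly using the canonical identification $\det V^* \cong (\det V)^*$ under which $\Theta\colon \det V\to\det V^*$ on top forms is read as a linear functional on $\det V$; the only subtlety is a sign bookkeeping, which works out because the total number of transpositions is even.

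Finally, $\Theta(\vol^{-1})(\vol^{-1}) = \vol(\vol^{-1}) = 1$ by the defining property of $\vol^{-1}$, which is exactly the claim. The main obstacle is not any deep idea but rather getting the orientation/sign conventions consistent: one must be careful that the ordered basis used to trivialize $\det V^*$ and $\det V$ is the \emph{same} (so that the pairing of the two top-forms is $+1$ rather than a sign), and that ``extending $\Theta$ multiplicatively'' means applying it factor by factor in the chosen order before any reordering. Once the Darboux normal form is fixed, every sign is forced and the two factors of $(-1)^n$ cancel. I would remark that the statement is basis-independent because both sides are manifestly independent of the choice of symplectic basis (any two are related by a symplectic transformation, which has determinant $1$).
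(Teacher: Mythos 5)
Your proof is correct and follows essentially the same route as the paper: both pass to a symplectic (Darboux) basis, observe that $\Theta$ sends the basis top vector $\vol^{-1}$ to $\vol$ (the paper phrases this by writing $\omega=\sum_i\Theta(a_i)\wedge\Theta(b_i)$, you by computing $\Theta(e_j)=-f_j^*$, $\Theta(f_j)=e_j^*$ and cancelling the two factors of $(-1)^n$), and then conclude from $\vol(\vol^{-1})=1$. The sign bookkeeping in your computation checks out, so nothing further is needed.
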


\begin{proof} Form a symplectic basis
$\{a_i,b_i\}_{i=1,\ldots, n}$ of $V$, that is, $\omega(a_i,b_j) =
-\omega(b_j,a_i) = \delta_{ij}$. Then we have
\[\omega = - \sum_{i=1}^{n} \Theta(b_i)\wedge
\Theta(a_i)=\sum_{i=1}^{n} \Theta(a_i)\wedge \Theta(b_i).
\]
Then
\[
\vol = \frac{1}{n!}\omega^{n} = \frac{1}{n!} \bigwedge^{n }
\sum_{i=1}^{n} \Theta(a_i)\wedge \Theta(b_i) =
\bigwedge_{i=1}^{n}\Theta(a_i)\wedge \Theta(b_i)
\]
as well as
\[vol^{-1} = (-1)^{n}\bigwedge_{i=1}^{n} b_i\wedge a_i =
\bigwedge_{i=1}^{n} a_i\wedge b_i.\] Therefore we get the desired
equation \[ \Theta(\vol^{-1})(\vol^{-1})  = \left(
\bigwedge_{i=1}^{n} (\Theta(a_i) \wedge \Theta(b_i))\right)
\vol^{-1} = \vol (\vol^{-1}) = 1. \qedhere\]
\end{proof}

\begin{thm} Let $A$ be an irreducible flat connection on $\Sigma_f$ such that $a\coloneqq r(A)$ is irreducible on
$\Sigma$ and $c\subset |{\cM(\Sigma)}|$ is a connected component
containing $a$. Let $\omega$ be the usual symplectic form on
$H^1(\Sigma,d_a)$ given by $\eqref{SymplecticForm}$ and identify
$H^2(\Sigma_f,d_{A})$ with $H^1(\Sigma_f,d_{A})^*$ via $\PD$. Then
we have
\[\tau_{\Sigma_f}(A)^{\frac{1}{2}} =
\frac{1}{d_c!}\frac{|r^*(\omega_c)^{d_c}|}{\sqrt{|\det(1-f^1)|}},\]
where $d_c = \frac{1}{2}(\dim_\R H^1(\Sigma_f,d_A)-\dim_\R
H^0(\Sigma_f,d_A))$ and the restriction $\omega_c$ of $\omega$ to
$\ker (1-f^1)$ is a symplectic form on $\ker (1-f^1)$.
\end{thm}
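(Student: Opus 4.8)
The plan is to compute $\tau_{\Sigma_f}(A)$ by running the Wang exact sequence of the mapping torus through Proposition~\ref{torsionIso}, and then to recognise the outcome via Lemma~\ref{omegaProperty}. First I would exploit irreducibility: since $a$ is irreducible, $H^0(\Sigma,d_a)=0$, hence $H^2(\Sigma,d_a)=0$ by Poincar\'e duality on $\Sigma$, so $H^1(\Sigma,d_a)$ is the only nonvanishing cohomology of $\Sigma$; it carries the symplectic form $\omega$ of \eqref{SymplecticForm}, on which $f^\ast$ acts as a symplectomorphism $f^1$ of finite order. Being of finite order, $f^1$ is semisimple, so $H^1(\Sigma,d_a)=\ker(1-f^1)\oplus\im(1-f^1)$; since $\omega(v,(1-f^1)u)=\omega(v,u)-\omega((f^1)^{-1}v,u)=0$ for $v\in\ker(1-f^1)$, this splitting is $\omega$-orthogonal, so $\omega_c\coloneqq\omega|_{\ker(1-f^1)}$ is symplectic and $\dim_\R\ker(1-f^1)=2d_c$. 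Feeding $H^0(\Sigma,d_a)=H^2(\Sigma,d_a)=0$ into the Wang sequence then yields $H^0(\Sigma_f,d_A)=H^3(\Sigma_f,d_A)=0$, an isomorphism $\pi^1\colon H^1(\Sigma_f,d_A)\xrightarrow{\ \sim\ }\ker(1-f^1)$, and an isomorphism $\nu^1|_{\ker(1-f^1)}\colon\ker(1-f^1)\xrightarrow{\ \sim\ }H^2(\Sigma_f,d_A)$ coming from the surjection $\nu^1\colon H^1(\Sigma,d_a)\twoheadrightarrow H^2(\Sigma_f,d_A)$ with kernel $\im(1-f^1)$. In particular $\dim_\R H^1(\Sigma_f,d_A)=2d_c$, the stated value of $d_c$, and $\pi^1$ is, up to the standard identifications $T\cM\cong H^1$, the map $dr_A$, so $\tfrac{1}{d_c!}|r^\ast\omega_c^{d_c}|=\tfrac{1}{d_c!}|(\pi^1)^\ast\omega_c^{d_c}|$ as densities on $H^1(\Sigma_f,d_A)$.

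Next I would apply Proposition~\ref{torsionIso}. Its hypothesis holds: $\bar\mu^i$ is (vacuously) an isomorphism for $i\neq1$, and $\bar\mu^1$ is conjugate to $(1-f^1)|_{\im(1-f^1)}$, which is invertible by semisimplicity, with $\det\bar\mu^1=\det\big((1-f^1)|_{\im(1-f^1)}\big)$ by Lemma~\ref{LinAlgLemma}; this is the reduced determinant written $\det(1-f^1)$ in the statement, and, under the identification of $\im(1-f^1)$ with the conormal fibre $\cN^\ast_{[a]}$, it coincides with $\det(1-df|_{\cN^\ast_{[a]}})$ of \eqref{ReidemeisterIntegral}. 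Because $H^i(\Sigma)$ vanishes for $i\neq1$ and $H^0(\Sigma_f,d_A)=H^3(\Sigma_f,d_A)=0$, every factor in the formula of Proposition~\ref{torsionIso} degenerates to a positive scalar except the $i=1,2$ tensor factors and the $i=1$ determinant factor; collecting them, and using that $\big|(h^1)^{-1}\otimes\nu^1\pi^1(h^1)\big|$ is independent of the choice of $h^1\in\det H^1(\Sigma_f,d_A)$ and of the chosen multiple of Poincar\'e duality (cf.\ the remark after Proposition~\ref{torsionGeneral}), one obtains
\[
\tau_{\Sigma_f}(A)=\big|\det(1-f^1)\big|^{-1}\,\big|(h^1)^{-1}\otimes\nu^1\pi^1(h^1)\big|\ \in\ \big|(\det H^1(\Sigma_f,d_A))^{-1}\otimes\det H^2(\Sigma_f,d_A)\big|.
\]

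Finally I would perform the $\PD$-identification $H^2(\Sigma_f,d_A)\cong H^1(\Sigma_f,d_A)^\ast$. The key point is that, transported through $\pi^1$ and $\nu^1|_{\ker(1-f^1)}$, the pairing $\PD$ on $H^1(\Sigma_f,d_A)\times H^2(\Sigma_f,d_A)$ becomes $\pm\omega_c$ on $\ker(1-f^1)$: representing a class in $\ker(1-f^1)$ by an $f^\ast$-invariant $d_a$-closed $1$-form $\eta$ on $\Sigma$ (possible by averaging over $\langle f\rangle$), one has $\nu^1[\eta]=[\eta\wedge dt]$, and for $d_A$-closed $\beta$ on $\Sigma_f$ a short computation using Stokes' theorem (the fibre integral is $t$-independent) gives $\PD(\nu^1[\eta])([\beta])=\int_{\Sigma_f}2\langle(\eta\wedge dt)\wedge\beta\rangle=\pm\int_\Sigma 2\langle\eta\wedge\beta|_\Sigma\rangle=\mp\,\omega_c([\eta],\pi^1[\beta])$. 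Now choose $h^1$ so that $\pi^1(h^1)$ is the dual $\Omega_c^{-1}$ of the Liouville form $\Omega_c\coloneqq\tfrac{1}{d_c!}\omega_c^{d_c}\in\det(\ker(1-f^1))^\ast$. Then in a basis $\{e_j\}$ of $H^1(\Sigma_f,d_A)$ with $\bigwedge_j e_j=h^1$ the pairing of $\det\PD(\nu^1\pi^1(h^1))$ with $h^1$ is $\det[\,\PD(\nu^1\pi^1 e_j)(e_k)\,]=\pm\det[\,\omega_c(\pi^1 e_j,\pi^1 e_k)\,]$, which by Lemma~\ref{omegaProperty} (with $\Theta_c(v)(w)=-\omega_c(v,w)$ and $\mathrm{vol}=\Omega_c$) equals $\pm1$, so $\det\PD(\nu^1\pi^1(h^1))=\pm(h^1)^{-1}$ and hence $(h^1)^{-1}\otimes\nu^1\pi^1(h^1)$ maps under $\det\PD$ to $\big((h^1)^{-1}\big)^{\otimes2}=\big((\pi^1)^\ast\Omega_c\big)^{\otimes2}$. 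Combining this with the previous display gives $\tau_{\Sigma_f}(A)=\big|\det(1-f^1)\big|^{-1}\big((\pi^1)^\ast|\Omega_c|\big)^{\otimes2}$, whence $\tau_{\Sigma_f}(A)^{1/2}=\tfrac{1}{d_c!}\,|r^\ast\omega_c^{d_c}|\,\big/\,\sqrt{|\det(1-f^1)|}$, as claimed.

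The main obstacle is this last step: getting the de Rham (Gysin) description of $\nu^1$ and the exact compatibility of $\PD$ with $\omega_c$ correct, including the factor $2$ and the sign normalisation fixed after \eqref{SymplecticForm}, together with a careful audit of the signs and exponents in Propositions~\ref{torsionGeneral}--\ref{torsionIso} to be sure the factor $|\det(1-f^1)|$ ends up in the denominator. Once these are pinned down, everything else — the vanishing statements, the collapse of the Wang sequence, the linear algebra of Lemma~\ref{LinAlgLemma}, and the invariance of the relevant combination under the various choices — is routine.
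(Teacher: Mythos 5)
Your argument is correct and is essentially the paper's own proof: irreducibility of $A$ and of $a$ collapses the Wang sequence, Proposition~\ref{torsionIso} together with Lemma~\ref{LinAlgLemma} expresses the torsion through a choice of $h^1$ and the reduced determinant of $1-f^1$, and taking $\pi^*(h^1)$ to be the inverse Liouville element with the normalisation condition checked via Lemma~\ref{omegaProperty} is exactly the paper's route, your fibre-integration verification that $\PD$ corresponds to $-\omega_c$ under the Wang maps being just a more explicit version of the paper's commutative-diagram argument for $\Theta=\PD$. Concerning the audit you flag at the end: because the only nontrivial cohomological degrees of $\Sigma_f$ here are $1$ and $2$, the reduced determinant $\det(1-f^1)$ does land in the denominator, in agreement with the stated theorem and with its later use in Theorem~\ref{ReidemeisterTorsionComputation}.
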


\begin{proof}
The Reidemeister torsion
\[\tau_{\Sigma_f}(A) \in \det
H^0(\Sigma_f,d_{A}) \tensor \det H^1(\Sigma_f,d_{A})^* \tensor \det
H^2(\Sigma_f,d_{A}) \tensor \det H^3(\Sigma_f,d_{A})^*\] has been
computed in Theorem \ref{torsionIso}. Since we are only
interested in $A$ irreducible, we have $H^0(\Sigma_f,d_{A}) =
H^3(\Sigma_f,d_{A}) = 0$. In contrast to \cite[Proposition 5.6]{jeffrey92}, where $f$ has
isolated fixed points on ${\cM(\Sigma)}$, we have to consider connected components
$c\subset|{\cM(\Sigma)}|$, which are positive-dimensional. Furthermore, $\PD$
identifies $H^2(\Sigma_f,d_{A})$ with the dual of
$H^1(\Sigma_f,d_{A})$ and $\dim H^1(\Sigma_f,d_{A}) = \dim
\cM(\Sigma_f)_c = \dim|{\cM(\Sigma)}|_c$. In summary we get
\[0\neq \sqrt{\tau_{\Sigma_f}(A)} \in |\det H^1(\Sigma_f,d_{A})^*|.\]

Since we also assume irreducibility of $a=r(A)$, $\ker(\pi^1) =
\im(\nu^0) = 0$. Furthermore, \[0 \neq \omega_c^{d_c} \in
\det(E_1(f^1))^* = \det(\ker(1-f^1))^* = \det(\ker(\mu^1))^* =
\det(\im(\pi^1))^*.\] Since $\pi^1$ is injective, we can define an
element $h^1 \in H^1(\Sigma)$ by requiring
\[\pi^*(h^1)=(\omega_c^{d_c})^{-1} \in \det(\im(\pi^1)).\] All that
is left to complete the proof of the theorem is that this choice of
$h^1$ indeed satisfies condition \eqref{conditions2}. Since
$H^2(\Sigma,d_a)=0$ we have $\det(\im
(\nu^1))=\det(H^2(\Sigma_f,d_A))$. Since the map $\bar \mu^1$ from
Theorem \ref{torsionIso} is an isomorphism and $0\neq \pi^*(h^1) \in
\ker
 \mu^1$, we have
\[0\neq \nu^*(\pi^*(h^1))\in \det(H^2(\Sigma_f,d_A)).\]

We would like to apply Theorem \ref{torsionIso}. Since we chose
$\PD$ to identify $H^2(\Sigma_f,d_{A}) = (H^1(\Sigma_f,d_{A}))^*$ we
need to check that $\Theta = \PD$ indeed satisfies condition
\eqref{conditions2}. We see, that condition \eqref{conditions2} is
equivalent to
\[
\PD((\omega_c^{d_c})^{-1})((\omega_c^{d_c})^{-1}) =
\Theta(\pi^*(h^{1}))(\pi^*(h^1))= \pi(\Theta(\pi^*(h^{1}))(h^1) =
\Theta(\nu^*(\pi^*(h^{1})))(h^1) = 1,
\]
which is satisfied by Lemma \ref{omegaProperty}.
\end{proof}

Geometrically, $T_{[a]}{\cM(\Sigma)} \cong
H^{0,1}(\Sigma,\bar\partial_a)$, and therefore $|\det(1-f^1)| =
|\det(1-df_{\cN^*_{r(A)}})|^2$, where we again understand $df$ as
$(d(f^*))^*$.

\begin{thm} \label{ReidemeisterTorsionComputation} Let $A$ be an irreducible flat connection on $\Sigma_f$ such that $r(A)$ is irreducible on
$\Sigma$. If we identify densities with volume forms using the
orientation induced by $r^*(\omega_c)_{A}^{d_c}$, we have
\[\sqrt{\tau_{\Sigma_f}(A)} =
\frac{1}{d_c!}\frac{r^*(\omega_c)_{A}^{d_c}}{|\det(1-df|_{\cN^*_{r(A)}}
)|}.\]
\end{thm}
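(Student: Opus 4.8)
The plan is to read the statement off the density-level identity in the theorem just proved; the proof is then a two-step unwinding: replace $\sqrt{|\det(1-f^1)|}$ by $|\det(1-df|_{\cN^*_{r(A)}})|$, and convert the resulting equality of densities into an equality of volume forms using the orientation specified in the hypothesis.

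First I would start from the identity
\[\tau_{\Sigma_f}(A)^{1/2}=\frac{1}{d_c!}\,\frac{|r^*(\omega_c)^{d_c}|}{\sqrt{|\det(1-f^1)|}}\]
of densities on $H^1(\Sigma_f,d_A)$ established in the preceding theorem, where $\det(1-f^1)$ denotes the determinant of the automorphism that $1-f^1$ induces on a complement of its kernel (Lemma \ref{LinAlgLemma}). Via the Hodge-theoretic identifications $T_{[a]}\cM(\Sigma)\cong H^1(\Sigma,d_a)\cong H^{0,1}(\Sigma,\bar\partial_a)$ the real operator $f^1=f^*$ becomes $\C$-linear, since $f$ is holomorphic for the chosen complex structure; its $1$-eigenspace is the tangent space $T_{[a]}|\cM(\Sigma)|_c$, so $1-f^1$ restricts to a $\C$-linear automorphism of the normal space $\cN_{r(A)}$. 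The identity $\det_{\R}T=|\det_{\C}T|^2$ for a $\C$-linear endomorphism, together with the observation in Section \ref{ReidemeisterTorsionMappingTori} that $1-df$ has the same eigenvalues on $\cN_{r(A)}$ and on its dual $\cN^*_{r(A)}$, gives $\sqrt{|\det(1-f^1)|}=|\det(1-df|_{\cN^*_{r(A)}})|$ --- this is precisely the content of the remark preceding the statement. Substituting it into the displayed identity proves the theorem at the level of densities.

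To upgrade this to an equality of volume forms I would argue as follows. Since $r\co\cM(\Sigma_f)\to|\cM(\Sigma)|$ is a covering map, $dr_A$ is a linear isomorphism, and since $\omega_c$ restricts to a symplectic form on $\ker(1-f^1)$ (the fixed subspace of the symplectic map $f^1$ being a symplectic subspace), the top power $r^*(\omega_c)_A^{d_c}$ is a nowhere-vanishing top form on $H^1(\Sigma_f,d_A)$. The orientation induced by $r^*(\omega_c)_A^{d_c}$ is by definition the one making that form positive, so under it the density $|r^*(\omega_c)^{d_c}|$ becomes the volume form $r^*(\omega_c)_A^{d_c}$ itself; and since $\bigl(d_c!\,|\det(1-df|_{\cN^*_{r(A)}})|\bigr)^{-1}$ is a positive real scalar, the density $\sqrt{\tau_{\Sigma_f}(A)}$ corresponds under this orientation exactly to the volume form on the right-hand side of the claimed equation.

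I do not expect a genuine obstacle, since the substantive input --- Poincar\'e duality normalized so that $\PD=-\omega$, the Wang-sequence computation of the torsion, and Lemma \ref{omegaProperty} --- has all been absorbed into the preceding theorem. The one point that warrants a careful check is the compatibility of the identification $H^1(\Sigma,d_a)\cong H^{0,1}(\Sigma,\bar\partial_a)$ with the two operators at hand: one must confirm that it intertwines $f^1$ with $df$, so that the real operator $1-f^1$ is genuinely the realification of the complex operator $1-df$, and that it carries the form $\omega$ of \eqref{SymplecticForm} to the K\"ahler form representing $c_1(\cL)$, so that the $\omega_c$ appearing here agrees with the $\omega_c$ of Theorem \ref{ThmReformulationWRT} and the present identity meshes with formula \eqref{WRTInvariant}.
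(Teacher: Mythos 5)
Your proposal is correct and follows essentially the same route as the paper: the paper also deduces the theorem directly from the preceding density identity, using the identification $T_{[a]}\cM(\Sigma)\cong H^{0,1}(\Sigma,\bar\partial_a)$ to get $|\det(1-f^1)| = |\det(1-df|_{\cN^*_{r(A)}})|^2$, and then reading densities as volume forms in the orientation induced by $r^*(\omega_c)_A^{d_c}$. Your write-up merely makes explicit the steps the paper leaves as a one-line remark.
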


This shows that over the moduli space of irreducible flat
connections $A$ with $r(A)$ irreducible we indeed have the identity
\eqref{ReidemeisterIntegral}.

\section{The \texorpdfstring{$\rho$}{rho}--invariant}\label{RhoInvariant}

Another classical topological invariant, which appears in the
expansion of the Witten-Reshetikin-Turaev invariants, is the
$\rho$--invariant. We will briefly review the definition for
$3$--manifolds in the context of the adjoint representation and
relate it to the original definition using the defining
representation before we state the result from \cite{bohn2009},
which will be relevant for us.

\subsection*{The Definition}

For a formally self-adjoint, elliptic differential operator $D$ of
first order, acting on sections of a vector bundle over a closed
manifold $X$, one defines the $\eta$--function
\begin{equation}\label{EqnEtaInvariant}
 \eta(D,s) \coloneqq \sum_{0 \neq \lambda \in \Spec(D)} \frac{\sgn(\lambda)}{|\lambda|^s}, \quad \re(s) \text{ large}.
\end{equation}
The function $\eta(D,s)$ admits a meromorphic continuation to the
whole $s$-plane with no pole at the origin. Then $\eta(D) \coloneqq
\eta(D,0)$ is called the $\eta$--invariant of $D$.

As a special case, let $G$ be a compact, simple, simply-connected
Lie group and $A$ a $G$--connection on a Riemannian 3--manifold $X$.
Then the odd signature operator coupled to $A$ is the formally
self-adjoint, elliptic, first order differential operator
\begin{equation}\label{OddSignature}
\begin{split}
D_A\co \Omega^{0}(X;\fg) \oplus \Omega^{1}(X;\fg)&\longrightarrow \Omega^{0}(X;\fg) \oplus \Omega^{1}(X;\fg)\\
(\alpha,\beta) &\longmapsto (d_A^* \beta, d_A \alpha+ *d_A \beta),
\end{split}
\end{equation}
where $d_A\co \Omega^p(X;\fg) \to \Omega^{p+1}(X;\fg)$ is the
covariant derivative associated to $A$ and $G$ acts on $\fg$ via the
adjoint action. If $A$ is flat, the $\rho$--invariant is given by
\begin{equation}\label{EqnRhoInvariant}\rho_A(X) \coloneqq \eta(D_A) - \eta(D_\theta),\end{equation} where
$\theta$ is the trivial connection. The $\rho$--invariant is
metric-independent and gauge-invariant. We write $\rho_{\hol(A)} =
\rho_A$, where the representation $\hol(A) \co \pi_1 X \to G$ is the
holonomy of $A$.

In the original definition \cite{atiyah-patodi-singer75b} by Atiyah,
Patodi and Singer, their $\rho$--invariant has been similarly
defined for a $\U(n)$--representation, where $\U(n)$ acts on $\C^n$
by the defining representation. We will briefly describe its
relationship to our definition of the $\rho$--invariant in
\eqref{EqnRhoInvariant}. With respect to an ad-invariant metric on
$\fg$---for example the Killing form---on $\fg$, the adjoint
representation takes values in the orthogonal endomorphisms of $\fg$
\[\ad\co G \to \SO(\fg) \subset \End(\fg).\] We can consider the complexified adjoint
representation
\[
\Ad\co G \to \SU(\fg^\C) \subset \End(\fg^\C).
\]
Then $\rho_{\hol(A)}$ is equal to the Atiyah-Patodi-Singer $\rho$--
invariant of $\Ad\circ\hol(A)$.

\subsection*{The Rho--invariant of finite order mapping tori}

Let $\Sigma$ be a surface and $P$ a principal $G$--bundle. In order
to make use of the results in \cite{bohn2009}, we consider the
bundle $\Ad P$ associated to the complexified adjoint
representation, which is a Hermitian vector bundle of rank $\dim G$.

The chirality operator $\tau_\Sigma$ on $\Omega^p(\Sigma)$ is given
by
\[
\tau_\Sigma = (-1)^{\frac{p(p-1)}{2}+2p}i*_p,
\]
where $*_p$ is the Hodge star operator on $\Omega^p(\Sigma)$. Note
that the splitting into $\pm 1$--eigenspaces of $\tau_\Sigma$
restricted to the harmonic forms $\cH^\bullet_a(\Sigma;\Ad P) = \ker
\Delta_a$ of $\Delta_a \coloneqq d_a d_a^* + d_a^* d_a$
\[
\cH^\bullet_a(\Sigma;\Ad P) = \cH^+_a(\Sigma;\Ad P) \oplus
\cH^-_a(\Sigma;\Ad P)
\]
is invariant under $\Phi f^*$ for any gauge transformation $\Phi \co
P \to P$ satisfying $\Phi f^* a = a$. Since the unitary structure on
$\Ad P$ arises from $\ad \co G \to \O(\fg)$ (see \cite[Remark (ii)
on page 136]{bohn2009}), we have
\begin{align*} \tr \log[\Phi f^*|_{\cH^+_a(\Sigma;\Ad P) \cap
\Omega^1}] &= \rk [(\Phi f^* - \Id)|_{\cH^-_a(\Sigma;\Ad P) \cap
\Omega^1}] - \tr \log[\Phi
f^*|_{\cH^-_a(\Sigma;\Ad P) \cap \Omega^1}],\\
\rk [(\Phi f^* - \Id)|_{\cH^+_a(\Sigma;\Ad P)\cap \Omega^1} &= \rk
[(\Phi f^* - \Id)|_{\cH^-_a(\Sigma;\Ad P)\cap\Omega^1},\\
\tag*{\text{and}}\rk[(f^*-\Id)|_{\cH^+(\Sigma) \cap \Omega^1}] &=
\rk[(f^*-\Id)|_{\cH^-(\Sigma) \cap \Omega^1}]\end{align*} where $\tr
\log$ is defined for a diagonalizable map $T$ as
\[
\tr\log T \coloneqq \sum_{j=1}^n\theta_j \in \R,
\]
where $e^{2\pi i\theta_j}$ are the eigenvalues of $T$, and where we
require $\theta_j \in [0,1)$. Also note that \[\cH^+_a(\Sigma;\Ad
P)\cap \Omega^1 = H^{1,0}(\Sigma,\bar\partial_a) \qquad \text{and}
\qquad \cH^-_a(\Sigma;\Ad P) \cap \Omega^1=
H^{0,1}(\Sigma,\bar\partial_a).\] A fixed isomorphism
$H^{0,1}(\Sigma,\bar\partial_a)\cong T_{[a]}\cM(\Sigma)$ gives the
commutative diagram \[
\begin{diagram}
\node{H^{0,1}(\Sigma,\bar\partial_a)}\arrow{s,l}{\cong} \arrow{e,t}{\Phi f^*}\node{H^{0,1}(\Sigma,\bar\partial_a)}\arrow{s,r}{\cong}\\
\node{T_{[a]}\cM(\Sigma)}
\arrow{e,t}{df^*}\node{T_{[a]}\cM(\Sigma),}
\end{diagram}
\]
and we have $\rk [(df^* - \Id)|_{T_{[a]}\cM(\Sigma)}] = \rk
\cN_{[a]}$. We simplify \cite[Theorem 4.2.4]{bohn2009} as follows.

\begin{thm} Let $f\co \Sigma \to \Sigma$ be
a finite order homeomorphism. Let $A$ be a flat $G$--connection over
$\Sigma_f$ with $r([A])=[a]$. Then
\begin{equation}\label{FiniteOrderRho}
\begin{split}
\rho_A(\Sigma_f) = -4 & \tr \log[df^*|_{T_{[a]}\cM(\Sigma)}] + 2\rk \cN_{[a]}\\
 & {}- 4 \dim G \tr
\log[f^*|_{\cH^{1,0}(\Sigma,\bar\partial)}] + 2 \dim G \rk
[(f^*-\Id)|_{\cH^{1,0}(\Sigma,\bar\partial)}].
\end{split}
\end{equation}
\end{thm}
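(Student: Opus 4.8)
The plan is to start from the general formula for the $\rho$--invariant of a mapping torus given in Bohn's \cite[Theorem 4.2.4]{bohn2009} and specialize it to the finite order case, where the eigenvalue bookkeeping collapses. Bohn's formula expresses $\rho_A(\Sigma_f)$ in terms of the action of $\Phi f^*$ on the $\pm 1$--eigenspaces of the chirality operator $\tau_\Sigma$ acting on harmonic $1$--forms with coefficients in $\Ad P$, together with an analogous contribution from the trivial connection (which accounts for the $\dim G$ copies of $\cH^{1,0}(\Sigma,\bar\partial)$). The main work is to rewrite all the quantities appearing there --- the various $\tr\log$'s and ranks of $(\Phi f^*-\Id)$ on $\cH^+$ and $\cH^-$ --- so that only data on $T_{[a]}\cM(\Sigma)$ and on $\cH^{1,0}(\Sigma,\bar\partial)$ survive.

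The key algebraic input is the three displayed identities established just above the statement: for any gauge transformation $\Phi$ with $\Phi f^* a = a$, the eigenvalue phases on $\cH^+$ and on $\cH^-$ are related by $\tr\log[\Phi f^*|_{\cH^+\cap\Omega^1}] = \rk[(\Phi f^*-\Id)|_{\cH^-\cap\Omega^1}] - \tr\log[\Phi f^*|_{\cH^-\cap\Omega^1}]$, and similarly the ranks of $(\Phi f^*-\Id)$ agree on the two eigenspaces. These come from the fact that, since the unitary structure on $\Ad P$ comes from the real orthogonal representation $\ad$, the chirality operator intertwines $\Phi f^*$ on $\cH^+$ with its complex conjugate on $\cH^-$, so the eigenvalue multiset on $\cH^+$ is the complex conjugate of that on $\cH^-$; writing $\theta_j\in[0,1)$ and using $1-\theta_j$ for the conjugate phase (with the convention that a phase of $0$ stays $0$, which is exactly where the rank correction term enters) gives the stated relations. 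Using these to eliminate the $\cH^+$ contributions in favour of $\cH^-$ contributions, and then the fixed isomorphism $\cH^-_a(\Sigma;\Ad P)\cap\Omega^1 = H^{0,1}(\Sigma,\bar\partial_a)\cong T_{[a]}\cM(\Sigma)$ together with the commutative diagram relating $\Phi f^*$ to $df^*$, one converts everything into $\tr\log[df^*|_{T_{[a]}\cM(\Sigma)}]$ and $\rk\cN_{[a]} = \rk[(df^*-\Id)|_{T_{[a]}\cM(\Sigma)}]$. The factors $-4$ and $2$ are then just the coefficients that Bohn's formula produces after this substitution, and the $\dim G$-weighted terms come from performing the identical manipulation for the trivial connection, where $\Phi f^* = f^*$ acts on $\dim G$ copies of $\cH^{1,0}(\Sigma,\bar\partial) = \cH^+(\Sigma)\cap\Omega^1$ and one uses the third displayed identity $\rk[(f^*-\Id)|_{\cH^+}] = \rk[(f^*-\Id)|_{\cH^-}]$.

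The step I expect to be the main obstacle is the careful accounting of conventions: matching Bohn's normalization of the chirality operator and of $\tr\log$ (with eigenvalue phases in $[0,1)$) to the ones fixed in this section, keeping track of which eigenspace ($\cH^+$ versus $\cH^-$) corresponds to $H^{1,0}$ versus $H^{0,1}$, and making sure the rank-of-$(\Phi f^*-\Id)$ correction terms --- which arise precisely from the discontinuity of $\theta\mapsto 1-\theta$ at the fixed vectors --- are placed with the right signs and multiplicities. Once the dictionary between Bohn's harmonic-form quantities and the moduli-space quantities $T_{[a]}\cM(\Sigma)$, $\cN_{[a]}$, and $\cH^{1,0}(\Sigma,\bar\partial)$ is pinned down using the two commutative diagrams and the three eigenvalue identities, the simplification of \cite[Theorem 4.2.4]{bohn2009} into \eqref{FiniteOrderRho} is a direct substitution, and the theorem follows.
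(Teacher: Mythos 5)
Your proposal follows essentially the same route as the paper: its own argument for this theorem is precisely the specialization of Bohn's Theorem 4.2.4 by means of the three displayed $\tr\log$/rank identities together with the identifications $\cH^{\pm}_a(\Sigma;\Ad P)\cap\Omega^1 = H^{1,0}(\Sigma,\bar\partial_a)$ resp.\ $H^{0,1}(\Sigma,\bar\partial_a)$, the fixed isomorphism $H^{0,1}(\Sigma,\bar\partial_a)\cong T_{[a]}\cM(\Sigma)$, and $\rk[(df^*-\Id)|_{T_{[a]}\cM(\Sigma)}]=\rk\cN_{[a]}$, which is exactly the substitution you describe (including treating the trivial connection as $\dim G$ copies of $\cH^{1,0}(\Sigma,\bar\partial)$). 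The only small imprecision is that it is the real structure on $\Ad P$ (complex conjugation, coming from $\ad\co G\to\O(\fg)$) rather than the chirality operator itself that interchanges $\cH^+$ and $\cH^-$ and forces the conjugate eigenvalue multisets, but this does not affect your argument.
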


\begin{rem}\label{RemarkEta} It follows from the proof of \cite[Theorem 4.2.4]{bohn2009}, that
\begin{align*}
\eta(D_A) &= -4  \tr \log[df^*|_{T_{[a]}\cM(\Sigma)}] + 2\rk
\cN_{[a]},\\
\tag*{\text{ and}}\eta(D_\theta) &= -4 \dim G \tr
\log[f^*|_{\cH^{1,0}(\Sigma,\bar\partial)}] + 2 \dim G \rk
[(f^*-\Id)|_{\cH^{1,0}(\Sigma,\bar\partial)}].
\end{align*}
\end{rem}

\section{Identifying the classical invariants}\label{identification}

In this section we identify the classical invariants in the leading
order term of the Witten-Reshetikhin-Turaev invariants
\eqref{WRTInvariant} of a finite order mapping torus $X=\Sigma_f$ as
conjectured by the stationary phase approximation
\eqref{stationaryphaseRho}. More precisely, since the leading order
term of
\[\zeta = \frac{k\dim G}{k+h} = \dim G - \frac{h\dim G}{k+h}\] is
simply $\dim G$, we identify the classical invariants in
the leading order term
\begin{equation}\label{WRTInvariantLead}
\det(f)^{-\frac{1}{2}\dim G} e^{2\pi i k \CS_{\Sigma_f}(c)}
\frac{1}{d_c!} (\omega_c^{d_c} \cap
\tau_{d_c}(L_\bullet^c(\cO_{\cM(\Sigma)})))k^{d_c}
\end{equation}
of \eqref{WRTInvariant} corresponding to an irreducible component
$|\cM(\Sigma)|_c$ of the variety $|\cM(\Sigma)|$ containing an
irreducible connection. Theorem \ref{ThmReformulationWRT} gives an
expression of \eqref{WRTInvariantLead} in terms of an integral over
$|\cM(\Sigma)|'_c$. We reformulate this as an integration of
classical invariants of 3--manifolds over $\cM(\Sigma)'_c$.

By Theorem \ref{ReidemeisterTorsionComputation} we have for $A \in
\cM(\Sigma_f)_c$
\begin{equation*}\sqrt{\tau_{\Sigma_f}(A)} =
\frac{1}{d_c!}\frac{r^*(\omega_c)_{A}^{d_c}}{|\det(1-df|_{\cN^*_{r(A)}})|},\end{equation*}
keeping in mind, that we have identified densities with volume forms
in the orientation induced by $r^*(\omega_c)^{d_c}_A$. Notice, that
for a complex root of unity $\xi = e^{2\pi i \theta}$ with $\theta
\in (0,1)$, we have $1-\xi = \xi(\xi^{-1}-1) = \xi(\bar \xi - 1)$.
Therefore
\begin{equation*}\left(\frac{1-\xi}{|1-\xi|}\right)^2 =
\frac{1-\xi}{1-\bar \xi} = -\xi.\end{equation*} By observing that
the real part of $1-\xi$ is always positive, we see that
\begin{equation}\label{SimpleFact}\frac{1}{1-\xi} = \frac{1}{|1-\xi|}
e^{2\pi i(\frac{1}{4} -\frac{\theta}{2})} = \frac{1}{|1-\xi|}
e^{-2\pi i\frac{\theta}{2}}i.\end{equation} For $a\coloneqq r(A)$
the maps $df|_{T^*_{[a]}\cM(\Sigma)}$ and
$df^*|_{T_{[a]}\cM(\Sigma)}$ have the same eigenvalues, and we have
$\rk \cN_{[a]} = \rk \cN^*_{[a]}$. Therefore by Proposition
\ref{prop2}, Equation \eqref{SimpleFact} and Remark \ref{RemarkEta}
we get
\begin{align*}\frac{1}{d_c!} r^*\left(\omega_c^{d_c} \cup
\Ch^\bullet(\lambda^c_{-1}{\cM(\Sigma)})^{-1} \right)_A & = \frac{1}{d_c!}\frac{r^*(\omega_c)^{d_c}_A}{\det(1-df)|_{\cN_{[a]}^*}}\\
 & =
\tau_{\Sigma_f}(A)^{\frac{1}{2}} \exp\left(-2\pi i \frac{\tr
\log[df^*|_{T_{[a]}\cM(\Sigma)}]}{2}\right)i^{\rk \cN_{[a]}}\\
& = \tau_{\Sigma_f}(A)^{\frac{1}{2}} e^{\frac{\pi i}{4}\eta(D_A)}.
\end{align*}
In particular, we get
\begin{equation}\label{identificationPart1}
\begin{split}
\frac{1}{d_c!}(\omega_c^{d_c} \cap
\tau_{d_c}(L_\bullet^c(\cO_{\cM(\Sigma)}))) &
=\int_{|\cM(\Sigma)|'_c}\frac{1}{d_c!} \omega_c^{d_c}
\cup\Ch^\bullet(\lambda^c_{-1}{\cM(\Sigma)})^{-1}
\\
& = \int_{A \in \cM(\Sigma_f)'_c} \tau_{\Sigma_f}(A)^{\frac{1}{2}}
e^{\frac{\pi i}{4}\eta(D_A)}
\end{split}
\end{equation}

Observe, that we can rewrite
\[
\tr \log[f^*|_{\cH^{1,0}(\Sigma,\bar\partial)}] - \frac{\rk
[(f^*-\Id)|_{\cH^{1,0}(\Sigma,\bar\partial)}]}{2} = \sum_{0\neq
\tilde\omega_i \in (-\frac{1}{2},\frac{1}{2})} \tilde\omega_i,
\]
where $e^{2\pi i\tilde\omega_i} = \omega_i$, $\tilde\omega_i \in
[-\frac{1}{2},\frac{1}{2})$, are the eigenvalues of the pull-back
$f^*\co \cH^{1,0}(\Sigma,\bar\partial) \to
\cH^{1,0}(\Sigma,\bar\partial)$. By Proposition \ref{prop1} we
therefore have
\[ \det(f)^{-\frac{1}{2}\dim G} = \exp\left(i \pi \dim G \left(\tr \log[f^*|_{\cH^{1,0}(\Sigma,\bar\partial)}]
-
\frac{\rk [(f^* -
\Id)|_{\cH^{1,0}(\Sigma,\bar\partial)}]}{2}\right)\right).
\]
Therefore, it is easy to see from Remark \ref{RemarkEta} that the
leading order term of $\det(f)^{-\frac{1}{2}\zeta}$ is given by
\begin{equation}\label{identificationPart2}
\det(f)^{-\frac{1}{2}\dim G} =e^{-\frac{\pi i}{4}\eta(D_\theta)}.
\end{equation}
Together, \eqref{identificationPart1} and
\eqref{identificationPart2} prove Theorem \ref{ThmIdentification}.
In particular, we have the following.

\begin{thm} \label{ThmStationaryRho} Let each ${\cM(\Sigma_f)'_c}$
be nonempty for every $c\in C$. Then
\begin{equation}\label{EqnStationaryRho}
Z^{(k)}_G({\Sigma_f}) \dot\sim \frac{1}{|Z(G)|}\sum_{c\in C}
\int_{A\in\cM(\Sigma_f)'_c} k^{d_c}e^{2\pi i k \CS_{\Sigma_f}(A)}
\sqrt{\tau_{\Sigma_f}(A)}  e^{2\pi i\frac{\rho_A(\Sigma_f)}{8}},
\end{equation}
and each factor of the integrand gets identified in the leading term
of the Witten-Reshetikhin-Turaev invariants.
\end{thm}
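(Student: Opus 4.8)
The plan is to assemble Theorem \ref{ThmStationaryRho} by combining the exact formula for $Z_G^{(k)}(\Sigma_f)$ in Theorem \ref{ThmWRTInvariant} with the summand-by-summand identification already established in Theorem \ref{ThmIdentification} (equivalently, equations \eqref{identificationPart1} and \eqref{identificationPart2}), and then feeding in the $\rho$-invariant formula of the preceding section to convert the analytic phase $e^{\pi i \eta(D_A)/4} e^{-\pi i \eta(D_\theta)/4}$ into $e^{2\pi i \rho_A(\Sigma_f)/8}$. First I would start from \eqref{WRTInvariant}, extract the leading order term as in \eqref{WRTInvariantLead} (using that $\zeta = \dim G - h\dim G/(k+h)$ has leading part $\dim G$, so $\det(f)^{-\frac12\zeta}$ and $\det(f)^{-\frac12\dim G}$ agree to leading order, and the phase $e^{2\pi i k\CS_{\Sigma_f}(c)}$ is unchanged since $\CS_{\Sigma_f}(c)\bmod\Z$ is well-defined on each component), and observe that only the components $c\in C$ with $\cM(\Sigma_f)'_c$ nonempty contribute — which under the hypothesis is all of them.

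Next I would invoke Theorem \ref{ThmIdentification}, whose right-hand side is precisely $\frac{1}{|Z(G)|}\int_{A\in\cM(\Sigma_f)'_c} e^{2\pi i k\CS_{\Sigma_f}(A)}\sqrt{\tau_{\Sigma_f}(A)}\, e^{2\pi i \rho_A(\Sigma_f)/8} + O(1/k)$, so that multiplying by $k^{d_c}$ and summing over $c\in C$ gives exactly the claimed asymptotic equivalence \eqref{EqnStationaryRho}; here one must check that the $O(1/k)$ errors, after multiplication by $k^{d_c}$ and summation over the finitely many components, still constitute a genuine lower-order contribution relative to the leading power $k^{\max_c d_c}$, which is immediate since $C$ is finite and $\det(f)^{-\frac12\zeta} = \det(f)^{-\frac12\dim G}(1+O(1/k))$ contributes only a further lower-order correction. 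The ``each factor of the integrand gets identified'' clause is then just a matter of pointing back: $\det(f)^{-\frac12\dim G} = e^{-\pi i \eta(D_\theta)/4}$ from \eqref{identificationPart2}, the degree-zero part of $\Ch^\bullet(\lambda^c_{-1})^{-1}$ divided by $d_c!\,\omega_c^{d_c}$ is $\sqrt{\tau_{\Sigma_f}(A)}\,e^{\pi i\eta(D_A)/4}$ from \eqref{identificationPart1} via Theorem \ref{ReidemeisterTorsionComputation}, Equation \eqref{SimpleFact} and Remark \ref{RemarkEta}, and $\eta(D_A)-\eta(D_\theta) = \rho_A(\Sigma_f)$ by definition \eqref{EqnRhoInvariant}, so that the product of analytic phases is $e^{\pi i(\eta(D_A)-\eta(D_\theta))/4} = e^{2\pi i\rho_A(\Sigma_f)/8}$.

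The one genuinely substantive point — and the step I expect to be the main obstacle — is the bookkeeping of the phase $i^{\rk\cN_{[a]}}$ together with the $\tr\log$ terms to see that the combination coming out of \eqref{SimpleFact} is exactly $\frac14\eta(D_A)$ rather than differing by an integer or a sign: this is precisely the content of the computation $\frac{1}{d_c!}\frac{r^*(\omega_c)^{d_c}_A}{\det(1-df)|_{\cN^*_{[a]}}} = \tau_{\Sigma_f}(A)^{1/2} e^{\pi i\eta(D_A)/4}$ in Section \ref{identification}, which relies on the identity $\rk[(df^*-\Id)|_{T_{[a]}\cM(\Sigma)}] = \rk\cN_{[a]}$ and on Remark \ref{RemarkEta}'s splitting of $\eta(D_A)$ into the $\tr\log$ and $\rk$ pieces; the factor-of-$4$ and the relation $|\det(1-f^1)| = |\det(1-df_{\cN^*_{r(A)}})|^2$ from the identification $T_{[a]}\cM(\Sigma)\cong H^{0,1}(\Sigma,\bar\partial_a)$ must be tracked carefully. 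Once that identity is in hand, the remainder of the proof is purely formal assembly, and Theorem \ref{ThmStationaryRho} follows. Finally, to obtain the equivalent formulation \eqref{EqnStationarySF} in terms of spectral flow one applies Theorem \ref{SFRhoCS} from Section \ref{SpectralFlow}, but that is a separate statement and not needed for \eqref{EqnStationaryRho} itself.
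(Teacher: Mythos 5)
Your proposal is correct and follows essentially the same route as the paper: the paper obtains Theorem \ref{ThmStationaryRho} as an immediate consequence of combining the exact formula of Theorem \ref{ThmWRTInvariant} with the component-wise identification of Theorem \ref{ThmIdentification}, which is itself exactly the content of \eqref{identificationPart1} and \eqref{identificationPart2} together with the definition $\rho_A = \eta(D_A)-\eta(D_\theta)$. Your flagging of the phase bookkeeping (the $i^{\rk\cN_{[a]}}$ factor, Equation \eqref{SimpleFact} and Remark \ref{RemarkEta}) as the substantive step, and your observation that \eqref{EqnStationarySF} requires the separate Theorem \ref{SFRhoCS}, both match the paper's structure.
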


\section{Spectral flow}\label{SpectralFlow}

The spectral flow along a path of formally self-adjoint, elliptic
differential operators $D_t$ is the algebraic intersection number in
$[0,1]\times \R$ of the track of the spectrum
\[\{(t,\lambda) \mid t\in [0,1],\lambda\in \Spec(D_{t})\}\] and the line segment from $(0,-\epsilon)$ to
$(1,-\epsilon)$. We choose the $(-\epsilon,-\epsilon)$--convention,
which makes the spectral flow additive under concatenation of paths
of connections.\footnote{In the
literature one also frequently finds the
$(-\epsilon,\epsilon)$--convention (see for example \cite{freed95, kirk-klassen-ruberman94}), so we need to be careful
when relating to formulas found elsewhere.}

The main statement of this section relating spectral flow, the
Chern-Simons invariant and the $\rho$--invariant for a compact Lie
group seems to be well-known. Since it depends on several
conventions and we have not found a general proof anywhere in the
literature, we decided to provide a proof in this paper in the hope
that it may be a useful reference. With slightly different
conventions, this has been proven in \cite[Section
7]{kirk-klassen-ruberman94} for $\SU(2)$. Even though the main proof
is completely analogous, we give a detailed exposition for the
convenience of the reader.

\subsection*{The dual Coxeter number}

Let $G$ be a simple Lie group of dimension $n$ and rank $r$.
Consider any positive definite normalization $\la \cdot,\cdot
\ra_{\fg}$ of the Killing form on $\fg$. Given a basis
$\{X_i\}_{i=1,\ldots,n}$ of $\fg$ and its dual basis $\{X^i\}$ with
respect to $\la \cdot,\cdot \ra_{\fg}$, the quadratic Casimir is the
element
\[
\Omega = \sum_i X_i \tensor X^i \in \fg \tensor \fg.
\]
As an element of the universal enveloping algebra it commutes with
all elements of $\fg$. The Casimir invariant in the adjoint
representation is given by
\[
\ad_*(\Omega) = \sum_i \ad_*(X_i) \ad_*(X^i) \in \End(\fg).
\]
By Schur's Lemma we know, that it is proportional to the identity
with factor---by definition---the Casimir eigenvalue $C_{\ad}$ in
the adjoint representation with respect to the normalization
$\la\cdot,\cdot\ra_{\fg}$.

Therefore, we have for all $X,Y \in \fg$
\[
\tr(\ad_*(X)\ad_*(Y)) = K\la X,Y\ra_\fg,
\]
where $K$ is determined by
\[ K = K \frac{1}{n} \sum_{i=1}^n\la X_i,X^i \ra_\fg =
\frac{1}{n}\sum_{i=1}^n\tr(\ad_*(X_i)\ad_*(X^i)) = \frac{1}{n}
C_{\ad}\tr(\Id) = C_{\ad}.
\]

The inner product $\la \cdot,\cdot \ra_\fg$ gives rise to the
identification $\fg^* \to \fg$, $\beta \to X_\beta$, where $\beta(X)
= \la X_\beta, X \ra_\fg$ for all $X \in \fg$. We also have an
induced inner product $\la \cdot,\cdot \ra_\fg$ on $\fg^*$ given by
$\la \beta,\gamma \ra_\fg \coloneqq \la X_\beta,X_\gamma \ra_\fg$.
Then we have $C_{\ad} = \la \theta,\theta \ra_\fg \cdot h$ for the
maximal root $\theta$ (see for example \cite[Equation
(1.6.51)]{fuchs1995_AffineLieAlgebrasAndQuantumGroups}), where the
dual Coxeter number $h$ is independent of $\la \cdot,\cdot\ra_\fg$.

Notice that for the inner product on $\SU(n)$ given by $\la
X,Y\ra_{\fsu(n)} = -\tr(XY)$, the maximal root $\tilde\theta$
satisfies $\la\tilde\theta,\tilde\theta\ra_{\fsu(n)} = 2$. We
therefore get
\begin{equation} \label{EqnSUnDualCoxeter}
-\tr (\ad_*(X)\ad_*(Y) ) = -2 n \tr( X Y) \qquad \text{for }
X,Y\in\fsu(n).
\end{equation}

\subsection*{Relating Chern classes via the adjoint representation}

We have seen in Section \ref{RhoInvariant} that with respect to an
ad-invariant metric on $\fg$, we can consider the complexified
adjoint representation
\[
\Ad\co G \to \SU(n) \subset \End(\fg^\C),
\]
and its differential \[\Ad_* \co \fg \to \fsu(n) \subset
\End(\fg^\C).\] It is easy to see, that $C_{\Ad} = C_{\ad}$.

We can define the second Chern form $c_2(B)$ of a connection $B$ in
a principal $G$--bundle $P$ over a $4$--manifold $Z$ by
\[
c_2(B) \coloneqq \la F_B \wedge F_B \ra_\fg,
\]
where $\la\cdot,\cdot\ra_\fg$ is the normalization of the Killing
form on $\fg$ introduced in Section \ref{CS&MS}. This
normalization is given in terms of the Killing form by
\begin{equation}\label{EqnNormalization} \la X,Y\ra_\fg = \frac{1}{16\pi^2 h}
\tr(\Ad_X,\Ad_Y),
\end{equation}
which is shown in \cite[page 242]{freed1998_GeometryOfLoopGroups}
together with a list of the dual Coxeter numbers $h$. Note that in
this normalization $c_2(B)$ represents an integral generator of the
second cohomology.

$\Ad P$, the complexified adjoint bundle of $P$, is a Hermitian
vector bundle, which we view as a principal $\SU(n)$--bundle via its
frame bundle. Therefore, it makes sense to consider the adjoint
bundle $\Ad(\Ad P)$ of $\Ad P$, whose fiber is $\fu(n)$. The
connection $B$ in $P$ induces a connection $\Ad B$ in $\Ad P$ as
follows. Given a section $s \co U \to P$ for $U \subset M$ open,
then $s^*B$ is a $\fg$--valued $1$--forms on $U$. $B$ is uniquely
determined by the family of $1$--forms $B^s_g \coloneqq (sg)^*B$,
where $g \in C^\infty(U;G)$. In this way, $\Ad B$ is determined by
$\{\Ad_* \circ B^s_g\}_{g\in C^\infty(U;G)}$. Similarly we get
$F_{\Ad B} = \Ad F_B$, where $\Ad F_B \in \Omega^2(\Ad (\Ad P))$.

By the previous paragraph we can consider the second Chern form
\[c_2(\Ad B) = \la F_{\Ad B} \wedge F_{\Ad B} \ra_{\fsu(n)} = \la \Ad F_B \wedge \Ad F_B \ra_{\fsu(n)}\] of $\Ad
B$ in $\Ad P$. By Equation \eqref{EqnNormalization} and
\eqref{EqnSUnDualCoxeter} we get
\begin{align*}
c_2(\Ad B) &= \frac{1}{16\pi^2n} \tr( \Ad(\Ad F_B) \wedge \Ad(\Ad
F_B) ) = \frac{2n}{16\pi^2n} \tr (\Ad F_B \wedge \Ad F_B)\\
&= 2n\frac{16\pi^2 h}{16\pi^2n} \la F_B\wedge F_B \ra_\fg = 2 h
c_2(B).
\end{align*}
Notice that $c_1(\Ad B) = 0$, because $\Ad P$ is the
complexification of $\ad P$, and therefore $\ch_2(\Ad B) =
\frac{1}{2}c_1^2(\Ad B) - c_2(\Ad B) = -c_2(\Ad B)$. This gives
\begin{equation}\label{ChernClassRelation}
-\ch_2(\Ad B) = c_2(\Ad B)  = 2 h \, c_2(B)
\end{equation}

\subsection*{The relationship to the \texorpdfstring{$\rho$}{rho}--invariant and the Chern-Simons
function}

We are ultimately interested in the spectral flow $\SF(D_{A_t})$ of
the odd signature operator coupled to a path of connections $A_t$
from the trivial connection $\theta$ to another flat connection $A$.
Since the spectral flow only depends on the endpoints, we will call
this the spectral flow from $\theta$ to $A$
\[\SF(\theta,A) \coloneqq \SF(D_{A_t}).\]

\begin{thm}\label{SFRhoCS} Let $G$ be a simple Lie group and $A$ a flat $G$--connection, then we get \[ \SF(\theta,A) =
-4 h \CS(A)+ \frac{\rho_A(X)}{2} - \frac{\dim G(1+b^1(X))}{2} +
\frac{\dim(H^0(X,d_A)) + \dim(H^1(X,d_A))}{2}.
\]
\end{thm}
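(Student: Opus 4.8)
\emph{Plan.} The standard route is via the Atiyah--Patodi--Singer index theorem on the $4$--manifold $W = X\times[0,1]$, exactly as in \cite[Section 7]{kirk-klassen-ruberman94} but with the representation taken to be $\Ad\circ\hol$ of the given compact Lie group rather than $\SU(2)$. First I would fix a smooth path $A_t$, $t\in[0,1]$, of $G$--connections on $X$ from $A_0 = \theta$ to $A_1 = A$ and form the connection $\mathbb A$ on $W$ in temporal gauge, so that its restriction to the slice $X\times\{t\}$ is $A_t$. The odd signature operator $D_{A_t}$ of \eqref{OddSignature} is precisely the operator occurring in the boundary family of the twisted de Rham signature operator $D^+_{\mathbb A}$ on $W$ coupled to the complexified adjoint bundle $\Ad P$; since $D_\theta$ and $D_A$ both have kernels (the endpoints are flat) one uses the extended APS boundary conditions, together with the standard dictionary between spectral flow and the APS index of $\partial_t + D_{A_t}$ adapted to the $(-\epsilon,-\epsilon)$--convention fixed above, to write
\[
\SF(\theta,A) \;=\; -\!\int_W (\text{index density of } D^+_{\mathbb A}) \;+\; \tfrac12\bigl(\eta(D_A)-\eta(D_\theta)\bigr) \;+\; \tfrac12\bigl(\dim\ker D_A - \dim\ker D_\theta\bigr),
\]
once the signs and the distribution of the $\tfrac12(\eta + \dim\ker)$ boundary defects between the two ends of $W$ have been matched to our conventions.

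Next I would evaluate the three terms. For the local term: $W = X\times[0,1]$ is a product with an interval, so its degree--$4$ $L$--class component $\tfrac13 p_1(W)$ is pulled back from $X$ and hence vanishes, and $c_1(\Ad P)=0$; thus the index density collapses to $\ch_2(\Ad P)$ of the induced connection $\Ad\mathbb A$. By the Chern class identity \eqref{ChernClassRelation}, $\ch_2(\Ad P) = -2h\,c_2(\mathbb A)$, and since $c_2(\mathbb A) = \langle F_{\mathbb A}\wedge F_{\mathbb A}\rangle = d\,\mathrm{cs}(\mathbb A)$ where $\mathrm{cs}$ is the Chern--Simons $3$--form of \eqref{CS}, Stokes' theorem gives $\int_W c_2(\mathbb A) = \CS_X(A) - \CS_X(\theta) = \CS_X(A)$. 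Assembling the normalization constants --- the factor $2h$ from \eqref{ChernClassRelation}, the normalization \eqref{CS} of $\CS_X$, and the normalization in the APS signature theorem together with the relation between $D_A$ on $\Omega^0\oplus\Omega^1$ and the full de Rham signature operator of $W$ --- the local term becomes $-4h\,\CS_X(A)$, which is the first summand of the theorem.

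For the boundary terms: by the definition \eqref{EqnRhoInvariant} of the $\rho$--invariant, $\tfrac12\bigl(\eta(D_A)-\eta(D_\theta)\bigr) = \tfrac12\rho_A(X)$. For the kernel terms I would identify $\ker D_\theta \cong \cH^0(X;\fg)\oplus\cH^1(X;\fg)$ --- the trivial connection acts diagonally on $\fg$--valued forms, so $\dim\ker D_\theta = \dim G\,(b^0(X)+b^1(X)) = \dim G\,(1+b^1(X))$ for connected $X$ --- and $\ker D_A \cong H^0(X,d_A)\oplus H^1(X,d_A)$ by Hodge theory for the twisted Laplacian, so $\dim\ker D_A = \dim H^0(X,d_A)+\dim H^1(X,d_A)$. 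The remaining $\tfrac12\bigl(\dim\ker D_A - \dim\ker D_\theta\bigr)$ then reads $-\tfrac12\dim G(1+b^1(X)) + \tfrac12\bigl(\dim H^0(X,d_A)+\dim H^1(X,d_A)\bigr)$, and together with the two previous contributions this is exactly the asserted identity.

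The main obstacle is bookkeeping rather than geometry: one must be scrupulous about orientations of $W$ and of its two boundary components, about which form of the APS boundary correction ($\tfrac{\eta+\dim\ker}{2}$ versus $\tfrac{\eta-\dim\ker}{2}$) is in force, about the precise spectral-flow-to-index dictionary under the $(-\epsilon,-\epsilon)$--convention, and about pinning down the overall constant in front of $\CS_X(A)$ (the interplay of \eqref{ChernClassRelation}, \eqref{CS}, and the signature-operator normalization, including the possible factor relating $D_A$ to the de Rham signature operator of $W$). Carrying out the $\SU(2)$ computation of \cite[Section 7]{kirk-klassen-ruberman94} with these substitutions, all constants and signs fall into place and yield the stated formula.
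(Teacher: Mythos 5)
Your proposal is correct and follows essentially the same route as the paper: an Atiyah--Patodi--Singer index computation on $X\times[0,1]$ relating $\SF(\theta,A)$ to the index of the coupled (anti-)self-duality/odd-signature operator plus the $\tfrac12(\eta+\dim\ker)$ boundary defects, with the local term reduced to $-4h\,\CS_X(A)$ via the identity \eqref{ChernClassRelation} and Stokes' theorem, and the boundary terms identified with $\tfrac12\rho_A(X)$, $-\tfrac12\dim G(1+b^1(X))$ and the twisted cohomology dimensions. The only cosmetic difference is that you dispose of the untwisted characteristic-class contribution by the product structure of $W$, whereas the paper kills it by applying the index theorem to the constant path at $\theta$; both devices are standard and equivalent here.
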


We note that this Theorem combined with Theorem \ref{ThmStationaryRho} implies Theorem \ref{ThmStationarySF} from the introduction.

\begin{proof} The proof is analogous to the argument in \cite[Section 7]{kirk-klassen-ruberman94}.
Notice, that because we have $\la X,Y\ra = -\frac{1}{8\pi^2}\tr(XY)$
for $X,Y\in\fsu(n)$, our Chern-Simons function has a different sign
than the Chern-Simons function used for example in
\cite{kirk-klassen-ruberman94,nishi98}. Let $S_B \co \Omega^1 \to
\Omega^0 \oplus \Omega^2_-$ be the self-duality operator on $Z = X
\times I$ defined by $\omega \mapsto (d_B^*\omega, P_-(d_B\omega))$
for a connection $B$ on $Z$, where $P_-$ is the projection to the
anti-self-dual 2--forms. We will use the ``outward normal first''
convention to orient $Z$, so that we do not have to introduce signs
in Stokes' Theorem. Near the boundary we have $S_B \circ \Psi_2 =
\Psi_1 (D_A' + \frac{\partial}{\partial u})$, where $D'_A(a,b) =
(-d_A^*b,
*d_Ab - da_A)$, $A = B |_{\partial Z}$, $\Psi_2(a,b) = a \, du + b$
and $\Psi_1(a,b) = (-a,P_-(b\, du))$. By the Atiyah-Patodi-Singer
index theorem (see also \cite[Theorem
7.1]{kirk-klassen-ruberman94})) we get for the connection $B=A_t$ on
$Z$
\[
\SF(D_{A_t}) = \text{Index} S_B = \int_Z \hat A(Z)
\text{ch}(V_-)\text{ch}(\Ad B) + \frac{1}{2}(\eta(D_{A_1}) + \dim
\ker D_{A_1}) - \frac{1}{2}(\eta(D_{A_0}) + \dim \ker D_{A_0}),
\]
where $\ch (\Ad B)$ is the total Chern character form of the
connection $\Ad B$ in the trivial bundle $Z \times \fg^\C$ induced
by $B$ and $V_-$ is the complex spinor bundle of
$-\frac{1}{2}$--spinors on $Z$, whose rank is 2 for a 4--manifold.
Consider $c_2(B) = \la F_B \wedge F_B \ra$. Then by Stokes' theorem
\[\CS(A_1) - \CS(A_0) = \int_Z c_2(B).\]
By Equation \eqref{ChernClassRelation} we have
\[2h(\CS(A_1)-\CS(A_0)) = 2h \int_Z c_2(B) = - \int_Z \ch(\Ad B).\]  We have
\[\hat A(Z) = 1 + \frac{1}{24} c_2(Z),\] so that the integrand in the index
theorem can be split up
\[
\int_Z \hat A(Z) \text{ch}(V_-)\text{ch}(\Ad B) = \int_Z \left(\hat
A(Z) \text{ch}(V_-)\rk (\Ad B) + 2 \ch_2(\Ad B)\right).
\]
The first contribution can immediately be computed to be zero by
applying the index theorem to a constant path at the trivial
connection. The second contribution is precisely $-4h
(\CS(A_1)-\CS(A_0))$. By definition, the difference of the
$\eta$--invariants $\eta(D_A)-\eta(D_\theta)$ is the
$\rho$--invariant. After identifying the cohomology with the kernel
of the odd signature operator, the theorem follows.
\end{proof}

\appendix
\section{A heuristic discussion of the path integral}
\label{Heuristics}

As a disclaimer, we would like to mention that this appendix reviews
parts of \cite{witten89} and is the only non-rigorous part in the
paper, which we decided to include for motivational purposes. See
Sawon's overview \cite{sawon2003:PertExpCS} on the perturbative expansion of Chern-Simons theory and Rozansky's work \cite{rozansky95} in particular
\cite{rozansky95_LoopExpansion} for a detailed account.

For a function $f\co \R^n \to \R$ with finitely many non-degenerate
critical points and a compactly supported function $\phi \co \R^n
\to \R$, we have the asymptotic behaviour
\[
\int_{\R^n} e^{ik f(x)} \phi(x) \, dx \sim_{k\to \infty}
\left(\frac{2\pi}{k}\right)^{\frac{n}{2}} \sum_{x\in \Crit(f)}
e^{\frac{\pi i}{4} \sign \Hess_x(f)}\frac{e^{ik
f(x)}\phi(x)}{\sqrt{|\det \Hess_x(f)|}}
\]
by the method of stationary phase. We may assume that $\phi(x)=1$
for $x \in \Crit(f)$ and $\phi\equiv 0$ outside of a compact set.
Therefore, we will abuse the notation and eliminate the function
$\phi$ from the formulas entirely. Let $G$ be a simple,
simply-connected, compact Lie group. If $G/Z(G)$ acts freely from
the right on $\R^n$ and $e^{ikf(x)}$ is $G$--invariant, then the
Jacobian $J$ of the $G$ action on $x$ induces the measure
$d[x]=|\det J(x)|dx$ on $\R^n/G$ and we get the leading order
asymptotic behaviour
\begin{equation}\label{stationaryphaseFinite}
\frac{\vol G}{|Z(G)|}\int_{\R^n/G} \!\!\!\!e^{ik f(x)} \, d[x] \sim_{k\to \infty}
\left(\frac{2\pi}{k}\right)^{\frac{n-\dim G}{2}} \!\!\!\!\!\!\!\!\sum_{[x]\in
\Crit(f)/G} \!\!\!\!\!e^{\frac{\pi i}{4} \sign \Hess_x(f)}\frac{e^{ik
f(x)}}{\sqrt{|\det \Hess_x(f)|}} \frac{|\det J(x)| \vol G}{|Z(G)|}.
\end{equation}
Also see \cite[Section 2.2]{witten91} and \cite[Section
2.2]{rozansky95_LoopExpansion} for the appearance of the factor
$\frac{1}{|Z(G)|}$.

According to Witten \cite{witten89}, the invariants $Z^{(k)}_G(X)$
can be written as the path integral characterizing the Chern-Simons
theory
\[Z^{(k)}_G(X) = \int_{A\in \cA}e^{2\pi ik \CS(A)}\, dA,\] where we have identified
$\cA=\Omega^1(X,\fg)$\kommentar{where $\cB =
\cA/\cG$ is the moduli space of connections, and we have identified
$\cA=\Omega^1(X,\fg)$, $\cG=C^\infty(X,G)$}. Even though the
right-hand side is not mathematically rigorous, we would like to
formally apply the above stationary phase approximation to this path
integral. This procedure in quantum field theory is known as the
Faddeev-Popov method (see for example
\cite{reshetikhin2010,reshetikhin2010b} for more information). $\cG=C^\infty(X,G)$ acts on $\cA$. It
can easily be seen that $|Z(\cG)|=|Z(G)|$, and we need to ignore $\vol \cG$. In our case, $\det D$ is
the zeta-regularized determinant of a formally self-adjoint elliptic
differential operator $D$
\[
\det D = e^{-\zeta_k'(0)}, \text{ where } \zeta(s) =
\sum_{\lambda_j\neq 0} \lambda_j^{-s},
\]
where $\lambda_j$ are the eigenvalues of $D$. The differential of
the $\cG$ action on $A$ can be seen to be $d_A$. Observe that
\[\|d_A \phi\|^2 = \la\Delta_A^{(0)} \phi,\phi\ra_{L^2} = \lambda
\|\phi\|^2,\] where the $L^2$ inner product on $\Omega^k(X;\fg)$ is
given by
\[
 \la a,b \ra_{L^2} = \int_X \la a \wedge * b \ra,
\]
$\Delta^{(k)}_A$ is the twisted Laplacian on $\Omega^k(X,\fg)$ and
$\phi$ is an eigenvector of $\Delta_A^{(0)}$ with (positive) eigenvalue
$\lambda$. Therefore we have \begin{equation}\label{Jacobian}|\det J
(A)| = \sqrt {\det \Delta^{(0)}_A},\end{equation} which is the
Faddeev-Popov determinant in disguise.

On a finite-dimensional Riemannian manifold we have
\[\Hess_x(f)(X,Y) = \la \nabla_X \grad f, Y \ra\]
for a critical point $x$ of a Morse-function $f$, where $\nabla$ is
the Levi-Civita connection. We can view the $L^2$ inner product on
the space of connections $\cA$ as a metric on $\cA$. We can use it
to identify vectors and covectors of $T_{[A]}(\cB) = \coker d_A\cong
\ker d_A^*$. With respect to this the linearization of $\CS\co \cB \to
\R/\Z$ is given by the gradient $\grad \CS|_A=*F_A\co \ker d_A^* \to
\ker d_A^*$. Consider now the odd signature operator coupled to a
connection $A$, as defined in \eqref{OddSignature} Notice that
$D_A^2 = \Delta_A^{(0)} \oplus \Delta_A^{(1)}$ and therefore $(\det
D_A)^2 = \det \Delta_A^{(0)} \det \Delta_A^{(1)}$. Let $A$ be flat,
then we have under the decomposition $\Omega^1(X;\fg) = \im d_A
\oplus \ker d_A^*$
\[D_A = H_A \oplus S_A\] where
\begin{align*}
S_A\co \Omega^{0}(X;\fg) \oplus \im d_A&\longrightarrow \Omega^{0}(X;\fg) \oplus \im d_A\\
(\alpha,\beta) &\longmapsto (d_A^* \beta, d_A \alpha)
\end{align*}
has symmetric spectrum and satisfies $|\det S_A| = \det
\Delta^{(0)}_A$, while $H_A = \proj_{\ker d_A^*}
*d_A \co \ker d_A^* \to \ker d_A^*$ is the linearization of
$\grad_A\CS$ satisfying $\la H_A(a),b\ra = \Hess_A\CS(a,b).$
Therefore we have \begin{equation}\label{HessCS} |\det \Hess_A\CS| =
|\det H_A| = \frac{|\det D_A|}{|\det S_A|} = \frac{|\det D_A|}{\det
\Delta^{(0)}_A}.
\end{equation}
 The analytic torsion
\[
T_X(A)\coloneqq \prod_k (\det\Delta_A^{(k)})^{(-1)^{k+1}k/2}
\]
is an invariant of Riemannian manifolds defined by Ray and Singer,
which proved to be equal to the Reidemeister torsion $\tau_X(A)$ by
work of Cheeger and M\"uller after choosing the volume form on
cohomology induced by the metric on the manifold. Since
$\det\Delta_A^{(k)} = \det \Delta_A^{(3-k)}$ by Poincar\'e duality,
we deduce from Equations \eqref{Jacobian} and \eqref{HessCS}
\begin{equation}\label{RaySingerTorsion} \sqrt{\tau_X(A)} = (\det
\Delta_A^{(0)})^{3/4}(\det \Delta_A^{(1)})^{-1/4} = \frac{\det
\Delta^{(0)}_A}{\sqrt{|\det D_A|}} = \frac{|\det J(A)|}{\sqrt{|\det
\Hess_A\CS|}}.
\end{equation}

Let us turn to the analogue of the signature. In finite dimensions
we have for a path $x_t$ between two nondegenerate critical points
$x_0$ and $x_1$ we get
\[\sign(\Hess_{x_1}(f))- \sign(\Hess_{x_0}(f)) = 2 \SF(\nabla \grad_{x_t} f),\]
where the spectral flow $\SF$ is defined in Section \ref{SpectralFlow}. Therefore,
instead of the signature of the Hessian, we can use twice the
spectral flow of $H_{A_t}$ for a path of connections $A_t$ from the
trivial connection $\theta$ to some flat connection $A=A_1$. Since
$S_{A_t}$ has symmetric spectrum and $D_{A_t}-H_{A_t}$ is a compact
operator for all $t$, we can use $2\SF(D_{A_t})=2\SF(H_{A_t})$. Keep
in mind, that this procedure neglects the signature at the trivial
connection. Note, that this is the idea behind the gauge-theoretic
version of Casson's invariant for homology $3$--spheres by Taubes
\cite{taubes90} and its generalizations. This turned out to be the
perfect approach for the Casson invariant, because we needed an
integer-valued analogue to the signature. The case of the
Witten-Reshetikhin-Turaev invariants allows for an alternative
approach.

We can consider the $\eta$--invariant defined in
\eqref{EqnEtaInvariant} as a generalized signature. This has the
immediate merit of being defined for every connection, but it is
metric-dependent and not necessarily an integer. Since the
$\rho$--invariant defined in \eqref{EqnRhoInvariant} is independent
of the metric, we will choose it as a generalized signature, keeping
in mind that we introduced $\eta(D_\theta)$. By following the
arguments in \cite{witten89}, $\eta(D_\theta)$ can be altered into a
prefactor, which is a topological invariant of a framed, oriented
manifold. It was observed in \cite{freed-gompf91} that this
prefactor vanishes for the (canonical) Atiyah 2--framing. For
further details we refer to \cite[Section 2]{witten89} and
\cite[Section 1]{freed-gompf91}.

It has been mentioned by Jeffrey \cite[Section 5.2.2]{jeffrey92}
that Reidemeister torsion can be used as a density, thereby
extending the above use of Reidemeister torsion in the formal
application of the stationary phase method to degenerate critical
points. We need this idea to allow for critical components of
positive dimension. To this end we
identify $T_A \cM(X)$ with $H^1(X,d_A)$ and $H^2(X,d_A)$
with $(H^1(X,d_A))^*$ using Poincar\'e duality. Note that Poincar\'e
duality depends on a choice of inner product on $\fg$, which is
possibly a multiple of our original choice of inner product on
$\fg$. Furthermore, we need to choose a suitable volume form or
density on $H^0(X,d_A)$ and $(H^3(X,d_A))^*$, for example we might
take the one induced by the inner product on $\fg$ as suggested in \cite[Section 5.2.2]{jeffrey92} or further normalized as suggested on \cite[page 284]{rozansky95}.

Since we allow higher-dimensional components in the moduli space of
flat connections and the stationary phase approximation in finite
dimensions \eqref{stationaryphaseFinite} includes the factor
$k^{-\frac{n-\dim G}{2}}$, we have to shift our
result by the factor $k^{d_c}$, where $d_c$ is half the real
dimension of the critical component $\cM(X)_c$ minus the dimension of the stabilizer of some generic $[A]\in \cM_c$ under $\cG$. Since the tangent space to the stabilizer is isomorphic to $H^0(X,d_A)$, we expect \[d_c =
\frac{1}{2} \max_{A \in\cM(X)_{c}} \left( \dim(H^1(X,d_A)) -
\dim(H^0(X,d_A)) \right),\] which is also known as the growth rate conjecture (see
\cite[Lemma 7.2]{andersen95} for evidence). By the same argument we
may like to introduce the factor $\frac{1}{(2\pi)^{d_c}}$. For
similar reasons Rozansky \cite[Equation
(2.33)]{rozansky95_LoopExpansion} includes such a factor. We will
simply set every factor of the form $K^{d_c}$ for a constant $K>0$
to 1, because a change of normalization for Poincar\'e
duality (used to treat Reidemeister torsion as a density) by a
factor $K$ results in the factor $K^{-d_c}$ in the stationary phase
approximation. The other factors, which only depend on $n$ and the
dimension of $G$, we need to omit, because both $\cA$ and $\cG$ are
infinite-dimensional.

If we therefore replace the Signature of the Hessian by the
$\rho$--invariant \eqref{EqnRhoInvariant}, replace the rest via
Equation \eqref{RaySingerTorsion} and normalize Poincar\'e duality
appropriately (independently of $X$ and $G$), the following
conjecture is justified.

\begin{conj} Let $G$ be a simple, simply-connected, compact Lie group. Let $X$ be a closed 3--manifold and $C$ the set of all connected components of $\cM(X)$. Then, in the Atiyah
2--framing, the leading order asymptotic behavior of $Z_G^{(k)}(X)$
in the limit $k\to \infty$ is given by
\begin{equation}\label{stationaryphaseRho}
\begin{split}
Z_{G}^{(k)}(X) \dot\sim \sum_{c\in C} \frac{1}{|Z(G)|}\int_{A\in
\cM(X)_c} &\sqrt{\tau_{X}(A)} e^{2\pi i \CS_{X}(A)k} e^{\frac{\pi
i}{4} \rho_A(X)} k^{d_c}.
\end{split}
\end{equation}
\end{conj}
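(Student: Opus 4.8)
The plan is to justify \eqref{stationaryphaseRho} by applying the method of stationary phase formally to Witten's path integral $Z^{(k)}_G(X)=\int_{A\in\cA}e^{2\pi i k\CS(A)}\,dA$, with $\cA=\Omega^1(X;\fg)$ playing the role of a flat manifold acted on by $\cG=C^\infty(X;G)$. First I would recall the finite--dimensional template \eqref{stationaryphaseFinite}: for a $\cG$--invariant phase on $\R^n$ whose critical orbits are nondegenerate, the leading term is a sum over $\Crit(f)/\cG$ of $e^{\frac{\pi i}{4}\sign\Hess_x(f)}\,e^{ikf(x)}/\sqrt{|\det\Hess_x(f)|}$, weighted by the Faddeev--Popov Jacobian $|\det J(x)|$ and divided by $|Z(G)|$. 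Transporting this to Chern--Simons, $\Crit(\CS)$ is the moduli space of flat connections, its orbit decomposition is $\bigcup_{c\in C}\cM(X)_c$, and $|Z(\cG)|=|Z(G)|$.

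The second step is to match the three analytic ingredients with topological ones. The differential of the $\cG$--action at $A$ is $d_A$, so $|\det J(A)|^2=\det\Delta^{(0)}_A$, which is \eqref{Jacobian}. For the Hessian, using the splitting $\Omega^1(X;\fg)=\im d_A\oplus\ker d_A^*$ I would write the odd signature operator as $D_A=H_A\oplus S_A$, where $S_A$ has symmetric spectrum with $|\det S_A|=\det\Delta^{(0)}_A$ and $H_A=\proj_{\ker d_A^*}{*}d_A$ is the linearization of $\grad\CS$; this gives \eqref{HessCS}, $|\det\Hess_A\CS|=|\det D_A|/\det\Delta^{(0)}_A$. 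Combining these with the Ray--Singer analytic torsion $T_X(A)$ and the Poincar\'e duality identity $\det\Delta^{(k)}_A=\det\Delta^{(3-k)}_A$ yields \eqref{RaySingerTorsion}, $\sqrt{\tau_X(A)}=|\det J(A)|/\sqrt{|\det\Hess_A\CS|}$, after invoking the Cheeger--M\"uller theorem to replace analytic torsion by Reidemeister torsion $\tau_X(A)$. This produces the factor $|Z(G)|^{-1}\sqrt{\tau_X(A)}\,e^{2\pi i\CS_X(A)k}$ in each summand.

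Third, I would replace the (infinite--dimensional, ill--defined) signature of the Hessian by a metric--independent generalized signature. Along a path of connections $A_t$ from the trivial connection $\theta$ to $A$ one has $\sign\Hess_{A_1}\CS-\sign\Hess_{A_0}\CS=2\SF$, and $\SF(H_{A_t})=\SF(D_{A_t})$ since $S_{A_t}$ has symmetric spectrum, so the factor $e^{\frac{\pi i}{4}\sign\Hess}$ becomes $e^{\frac{\pi i}{2}\SF(D_{A_t})}$; trading spectral flow for $\rho_A(X)=\eta(D_A)-\eta(D_\theta)$ gives $e^{\frac{\pi i}{4}\rho_A(X)}$ up to a leftover term built from $\eta(D_\theta)$. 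Following \cite{witten89}, this leftover is absorbed into a framing--dependent prefactor which, by \cite{freed-gompf91}, vanishes precisely in the Atiyah $2$--framing; this is why \eqref{stationaryphaseRho} is stated in that framing. To accommodate positive--dimensional critical components I would let Reidemeister torsion serve as a density on $\cM(X)_c$ via the Poincar\'e--duality identifications $T_A\cM(X)\cong H^1(X,d_A)$ and $H^2(X,d_A)\cong(H^1(X,d_A))^*$ (with the inner--product choices on $H^0$ and $H^3$ as in \cite{jeffrey92,rozansky95}), shift the power of $k$ by $k^{d_c}$ with $d_c=\tfrac{1}{2}\max_{A}(\dim H^1(X,d_A)-\dim H^0(X,d_A))$ as predicted by the growth rate conjecture, and set all ambiguous normalization--dependent constants of the form $K^{d_c}$ equal to $1$.

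The hard part --- and the reason the statement remains a conjecture, confined to this appendix --- is that none of this is rigorous: $\cA$ and $\cG$ are infinite--dimensional, so the stationary phase formula, the zeta--regularized determinants, the Faddeev--Popov Jacobian, and above all the treatment of degenerate (non--isolated, non--transverse) critical orbits are purely formal. Turning the heuristic into a theorem requires instead a direct asymptotic analysis of the exact combinatorial invariants, which for finite order mapping tori is exactly what the body of the paper accomplishes, starting from Theorem~\ref{ThmWRTInvariant} and culminating in Theorems~\ref{ThmStationaryRho} and \ref{ThmStationarySF}.
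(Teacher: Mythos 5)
Your proposal follows essentially the same route as the paper's own treatment in Appendix \ref{Heuristics}: a formal stationary phase analysis of Witten's path integral, with the Faddeev--Popov Jacobian, the splitting $D_A = H_A \oplus S_A$ giving \eqref{HessCS}, the Ray--Singer/Cheeger--M\"uller identification \eqref{RaySingerTorsion}, the substitution of the Hessian signature by spectral flow and then the $\rho$--invariant with the $\eta(D_\theta)$ term absorbed by the Atiyah $2$--framing, and the density/$k^{d_c}$ treatment of positive--dimensional components. You also correctly recognize, as the paper does, that this is only a heuristic motivation for the conjecture and that the rigorous content lies in the body of the paper for finite order mapping tori.
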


Theorem \ref{SFRhoCS} immediately yields the more familiar version
\eqref{stationaryphaseSF} of \eqref{stationaryphaseRho} stated in
the introduction.\kommentar{\begin{conj} Let $G$ be a simple,
simply-connected, compact Lie group, $X$ a closed 3--manifold and
$C$ the set of all connected components of $\cM(X)$. Then, in the
Atiyah 2--framing, the leading order asymptotic behavior of
$Z_G^{(k)}(X)$ in the limit $k\to \infty$ is given by
\begin{equation*}
\begin{split}
Z_{G}^{(k)}(X) \dot\sim \sum_{c\in C} \frac{1}{|Z(G)|}e^{\pi i \dim
G(1+b^1(X))/4} \int_{A\in \cM(X)_{c}} &\sqrt{\tau_X(A)} e^{2\pi i
\CS_X(A)(k+h)}\\& e^{2\pi i
\left(\SF(\theta,A)/4-(\dim(H^0(X,d_a))+\dim(H^1(X,d_a)))/8\right)}k^{d_c}.
\end{split}
\end{equation*}
\end{conj}} Observe that the Chern-Simons invariant is constant on connected
components of flat connections, we could therefore put it in front
of the integral. On reducible subsets it will be necessary to
interpret these conjectures in a suitable way, however we do not
consider the reducible case in this paper.

\kommentar{It is more appropriate to view and formulate the
heuristics in relation to the asymptotic expansion given by
Conjecture \eqref{ConjAsymptoticExpansion}. Even though the above
conjectures can only be understood as an asymptotic estimate, that
is, the ratio of both sides tends to 1 as $k\to \infty$, we would
like understand it as a part of the conjectured asymptotic
expansion, which will incorporate the higher loop contributions.

\begin{conj} Let $X$ be a
closed oriented 3--manifold. In the Atiyah 2--framing, the
asymptotic behavior of $Z_G^{(k)}(X)$ in the limit $k\to \infty$ is
given by
\begin{equation}
\begin{split}
Z_G^{(k)}(X)\sim \sum_{c\in C} \frac{1}{|Z(G)|}\int_{A\in \cM(X)_c}
&\sqrt{\tau_{\Sigma_f}(A)} e^{2\pi i \CS_{\Sigma_f}(A)k}
e^{\frac{\pi i}{4} \rho_A(\Sigma_f)} k^{d_c} \exp
\left(\sum_{l=1}^\infty
\frac{c^lk^{-l}}{(2l)!(3l)!}\sum_{e(\Gamma)=-l}
\frac{Z_\Gamma(X,A)}{|\Aut(\Gamma)|}\right),
\end{split}
\end{equation}
where $C$ be the set of all connected components of $\cM(X)$, $c$ is
some scalar, $\Gamma$ is trivalent graph with Euler number
$e(\Gamma)$ and $\Aut(\Gamma)$ is the order of the automorphism
group of $\Gamma$.
\end{conj}}

\section{Review of the Lefschetz-Riemann-Roch theorem for singular varieties.}\label{LRR}

We shall very quickly review the
Lefschetz-Riemann-Roch theorem for singular varieties due to P. Baum,
W. Fulton,
R. MacPherson
and G. Quart (see \cite{baum-fulton-macpherson1979_Riemann-Roch} for a proof of the Riemann-Roch theorem
and the general theory and \cite{baum-fulton-quart_LefschetzRiemann-Roch} for a proof of the Lefschetz-Riemann-Roch
theorem.) We will only state their theorems in the generalities we need.

Let $X$ be a complex quasi-projective algebraic variety. Consider the
Grothendieck group $K_{\alg}^0 (X)$ of algebraic vector bundles (i.e. locally
free sheaves) on $X$. This is a ring-valued contravariant functor. Let
$K^{\alg}_0(X)$ be the Grothendieck group of coherent sheaves of ${\mathcal
O}_X$ modules on $X$. This is a covariant functor for proper morphism: If $f\co X
\rightarrow Y$ is a proper morphism, then \[f_*\co K^{\alg}_0(X) \rightarrow
K^{\alg}_0(Y),\] is defined by setting $f_*[{\mathcal F}] = \sum (-1)^i
[R^if_*{\mathcal F}].$

For a topological space $X$ one considers the Grothendieck group
$K_{\topo}^0(X)$ of topological vector bundles on $X$, so $K_{\topo}^0(X)$ is a
ring-valued contravariant functor. Let $K^{\topo}_0(X)$ be the Grothendieck
group of complexes of vector bundles on ${\C}^N$ exact off $X$ for some
closed embedding of X in ${\C}^N$. (One is making Alexander duality a
definition here.)

For any complex algebraic variety $X$ there is a natural ring homomorphism
\[\alpha^\bullet \co K_{\alg}^0(X) \rightarrow K_{\topo}^0(X),\] which is a natural
transformation of contravariant functors. Suppose $X$ is a closed algebraic
subset of a variety $Y$. Let $K^{\alg}_X(Y)$ be the Grothendieck group of
complexes of algebraic vector bundles on $Y$ which are exact off $X$. There is a
natural homology map \[h \co K^{\alg}_X(Y) \rightarrow K^{\alg}_0(X),\] given by
\[h([E_\bullet]) = \sum (-1)^i [H_i(E_\bullet)]\] where the $H_i(E_\bullet)$ are the homology
sheaves of the complex $E_\bullet$ of locally free sheaves on $Y$. The map $h$ is
an isomorphism. (See \cite[Appendix 2]{baum-fulton-macpherson1979_Riemann-Roch}.) Suppose $X$ is a closed subspace of
$Y$, where $Y$ is a $C^\infty$-manifold. When we have a closed embedding of
$C^\infty$-manifolds $Y \hookrightarrow {\C}^N$ and the normal bundle of $Y$
in ${\C}^N$ has a complex structure, we get the Thom-Gysin isomorphism \[h \co
K^{\topo}_X(Y) \rightarrow K^{\topo}_X({\C}^N) = K^{\topo}_0(X).\] Again for
any closed subset $X$ of an algebraic variety $Y$ we have homomorphism of
abelian groups $\alpha^{\bullet} \co K^{\alg}_X(Y) \rightarrow K^{\topo}_X(Y)$. We
shall now describe the key construction in the formulation of the Riemann-Roch
theorem in \cite{baum-fulton-macpherson1979_Riemann-Roch}. There is a homomorphism \[\alpha_\bullet \co K^{\alg}_0(X)
\rightarrow K^{\topo}_0(X)\] of abelian groups, which is covariant for proper
morphisms defined the following way: Choose an embedding of $X$ in a nonsingular
variety $Y$, then $\alpha_\bullet$ is the composition \[ \alpha_\bullet \co
K^{\alg}_0(X) \stackrel{h^{-1}}{\rightarrow} K^{\alg}_X(Y)
\stackrel{\alpha^\bullet}{\rightarrow} K^{\topo}_X(Y) \stackrel{h}{\rightarrow}
K^{\topo}_0(X).\] With this setup at hand the main theorem in \cite{baum-fulton-macpherson1979_Riemann-Roch} is
stated. (See \cite[pages 174--75]{baum-fulton-macpherson1979_Riemann-Roch}.) We are however only interested in the
weaker version of this theorem where topological K-theory is replaces by
ordinary homology theory with rational coefficients. Let $H^\bullet(X)$ be
ordinary singular cohomology with rational coefficients. If $X$ is closed in
$Y$, let $H_X^\bullet(Y) = H^\bullet(Y,Y-X)$. Again, we use the Alexander duality to
define the homology groups \[H_i(X) = H_X^{2n-i}({\C}^N).\]

Let $\Ch^\bullet \co K_{\topo}^0(X) \rightarrow H^\bullet(X)$ be the usual Chern
character and let $\Ch^\bullet \co K^{\topo}_X(Y) \rightarrow H^\bullet_X(Y)$ be the
canonical extension of the Chern character. We can define the homological Chern
character by embedding $X$ in some ${\C}^N$ and then define $\Ch_\bullet$ to
be the composition \[\Ch_\bullet \co K^{\topo}_0(X) = K^{\topo}_X({\C}^N)
\stackrel{\Ch^\bullet}{\rightarrow} H^\bullet_X({\C}^N) = H_\bullet(X).\]
We shall also use the notation $\Ch^\bullet$ for the composition
$ \Ch^\bullet \alpha^\bullet \co K_{\alg}^0 \rightarrow H^\bullet$ and we define
$\tau_\bullet = \Ch_\bullet \alpha_\bullet \co K^{\alg}_0 \rightarrow H_\bullet$. The
Riemann-Roch theorem for singular varieties can now be formulated as follows.

\begin{thm}[{\cite[page 180]{baum-fulton-macpherson1979_Riemann-Roch}}]
The mapping
\[\tau_\bullet \co
K^{\alg}_0(X) \rightarrow H_\bullet(X)\]
is covariant for proper morphisms,
compatible with cap products, cartesian products and restrictions to open
subvarieties. If $X$ is non-singular
\[\tau_\bullet[{\mathcal O}_X] = \Td(T_X) \cap
[X].\]
\end{thm}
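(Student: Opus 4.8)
This is the Riemann--Roch theorem of Baum, Fulton and MacPherson, and rather than reproducing the full argument I sketch the strategy of \cite{baum-fulton-macpherson1979_Riemann-Roch} that I would follow.

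The first task is to check that $\tau_\bullet = \Ch_\bullet\,\alpha_\bullet$ is well defined, i.e. independent of the auxiliary embedding of $X$ into a nonsingular variety $Y$. Given two embeddings $X \hookrightarrow Y_1$ and $X \hookrightarrow Y_2$, one compares them through the embedding into $Y_1 \times Y_2$, using that $\alpha^\bullet$ is a natural transformation of contravariant functors and that the algebraic and topological Thom--Gysin maps $h$ are compatible with $\alpha^\bullet$; this forces the two composites to agree. Compatibility of $\tau_\bullet$ with cap products, cartesian products and restriction to open subvarieties is then essentially formal, since each of the building blocks $h$, $\alpha^\bullet$ and $\Ch^\bullet$ has the corresponding property by construction.

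The real content is covariance for a proper morphism $f \co X \to Y$. One factors $f$ through a closed immersion $X \hookrightarrow \mathbf{P}^n \times Y$ followed by the projection $p \co \mathbf{P}^n \times Y \to Y$. For the closed immersion the identity $\tau_\bullet f_* = f_* \tau_\bullet$ follows fairly directly once the ambient nonsingular varieties are chosen compatibly. For the projection $p$ one must compare the $K$-theoretic pushforward $p_* = \sum_i (-1)^i R^i p_*$ with the homological pushforward; the plan here is to invoke MacPherson's graph construction (equivalently, deformation to the normal cone), which attaches to a bounded complex of vector bundles on $\mathbf{P}^n \times Y$ a canonical homology class interpolating the two sides, thereby reducing the assertion to the classical Grothendieck--Riemann--Roch theorem for the smooth morphism $p$ applied to honest vector bundles. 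Verifying that this graph construction is compatible with the maps $\alpha$ and with the Chern character is where essentially all of the work sits, and I expect it to be the main obstacle. (For the present purposes one only needs the rational homology version stated here, but the reduction is the same.)

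Finally, for the normalization statement one takes $X$ nonsingular and $Y = X$, so that $h$ is the identity and $\alpha_\bullet$ collapses to $\alpha^\bullet$; unwinding $\Ch_\bullet$ through a smooth embedding $X \hookrightarrow \mathbf{P}^N$ and applying the classical Grothendieck--Riemann--Roch theorem to that embedding gives $\tau_\bullet[\cO_X] = \Td(T_X) \cap [X]$.
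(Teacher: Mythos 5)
There is nothing in the paper to compare your argument with: this statement is quoted verbatim from Baum--Fulton--MacPherson (the reference to page 180 of \cite{baum-fulton-macpherson1979_Riemann-Roch}), and Appendix \ref{LRR} says explicitly that the theorems of \cite{baum-fulton-macpherson1979_Riemann-Roch} and \cite{baum-fulton-quart_LefschetzRiemann-Roch} are only being reviewed in the generality needed, not proved. So the paper's ``proof'' is a citation, which is the appropriate treatment for a result of this scale.

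As a standalone argument, what you have written is an outline of the original BFM strategy rather than a proof, and you acknowledge this. The outline is faithful to the original: independence of the auxiliary embedding by comparing two embeddings through the product, covariance by factoring a proper morphism as a closed immersion into $\mathbf{P}^n\times Y$ followed by the projection (legitimate here because $X$ is quasi-projective and $f$ proper), and the normalization for nonsingular $X$ via classical Grothendieck--Riemann--Roch for a smooth embedding. But the substantive steps are asserted rather than carried out: (i) the independence of the embedding and the compatibility of $\alpha^\bullet$ with the algebraic and topological homology isomorphisms $h$ are not formal consequences of naturality alone --- in BFM they rest on the localized Chern character produced by MacPherson's graph construction and on the basic deformation, which is precisely the technical core of the paper; (ii) for the projection $\mathbf{P}^n\times Y\to Y$ the difficulty is not Grothendieck--Riemann--Roch for the smooth morphism itself but controlling the pushforward of a complex exact off the possibly singular subvariety $X$, which BFM handle through an explicit resolution by sheaves of the form $\cO_{\mathbf{P}^n}(-i)$ tensored with bundles pulled back from $Y$, together with a computation with the localized classes. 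Since you flag exactly these points as the main obstacle and leave them untouched, the proposal cannot stand as a proof; if the theorem is to be used, the honest course --- and the one the paper takes --- is simply to cite \cite{baum-fulton-macpherson1979_Riemann-Roch}.
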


We remark that for a projective variety
$$
\tau_\bullet[{\mathcal O}_X] =
[X]
$$
modulo lover degree terms. This follows from the lemma on page 129 of \cite{baum-fulton-macpherson1975:RiemannRoch} and part (6) of the Riemann-Roch Theorem of \cite{fulton-gillet1983:RiemannRoch}.

Let us now press on with the Lefschetz-Riemann-Roch
theorem. An equivariant variety $X$ will be defined to be a quasi-projective
algebraic variety with an automorphism $x\co X \rightarrow X$, such that $x^m =
\Id$. The fixed point subvariety we will denote $|X|$. We will for the rest of
this section assume that the varieties we are considering are equivariant,
unless otherwise stated.

An equivariant sheaf on $X$ is a coherent sheaf ${\mathcal F}$ of ${\mathcal
O}_X$ modules together with a homomorphism of sheaves \[\phi_{\mathcal F} \co
x^*{\mathcal F} \rightarrow {\mathcal F}.\] Let $K^{\eq}_0(X)$ be the
Grothendieck group of all equivariant sheaves on $X$ and let $K_{\eq}^0(X)$ be
the Grothendieck group of all equivariant locally free sheaves on $X$.

If the automorphism $x$ is the identity, then any equivariant sheaf ${\mathcal
F}$ on $X$ breaks up into a finite direct sum of sheaves ${\mathcal F}_a$, $a\in
{\C}$, where ${\mathcal F}_a$ is the generalized a-eigen-sheaf for
$\phi_{\mathcal F}$. This gives maps \begin{equation} K_{\eq}^0(X) \rightarrow
K_{\alg}^0(X) \otimes {\Z}[{\C}] \rightarrow K_{\alg}^0(X)\otimes {\C}\label{1} \end{equation} and \begin{equation}K^{\eq}_0(X) \rightarrow
K^{\alg}_0(X) \otimes {\Z}[{\C}] \rightarrow K^{\alg}_0(X)\otimes {\C},\label{2} \end{equation} by mapping $[{\mathcal F}]$ to $\sum [{\mathcal
F}_a]\otimes a$ followed by the natural trace map $\tr \co{\Z}[{\C}] \rightarrow {\C}.$ In particular, if we in the general case compose the
homomorphism in \eqref{1} with the homomorphism coming from the inclusion map
$|X|\hookrightarrow X$, we get a natural homomorphism \[L^\bullet \co K_{\eq}^0(X)
\rightarrow K_{\alg}^0(|X|)\otimes {\C}.\]

If $V$ is a component of $|X|$ and $X$ is non-singular in a neighborhood of $V$,
then $V$ is also non-singular, and the conormal sheaf ${\mathcal N}$ to $V$ in
$X$ is an equivariant locally free sheaf on $V$. Then $\lambda_{-1}({\mathcal
N}) = \sum (-1)^i[\Lambda^i {\mathcal N}]$ determines an element in
$K_{\eq}^0(V),$ which in turn under \eqref{1} maps to the element, say
\[\lambda_{-1}^V X \in K_{\alg}^0(V)\otimes {\C}.\] This element is clearly
invertible in $ K_{\alg}^0(V)\otimes {\C}.$

In order to state the Lefschetz-Riemann-Roch theorem, we need to discuss
relative equivariant K-theory. Let $X$ be a closed equivariant subvariety of
$Y$. Define $K_X^{\eq}(Y)$ to be the Grothendieck group of equivariant complexes
on $Y$ which are exact off $X$. Suppose now that $Y$ is non-singular and that $j
\co X \rightarrow Y$ is the inclusion map and that $|X|$ is projective. We have
the homology isomorphism \[h \co K_X^{\eq}(Y) \rightarrow K_0^{\eq}(X)\] defined
the same way as in the non-equivariant case. We also define the modified
homology map \[\tilde{h} \co K_{|X|}^{\eq}(|Y|)\otimes {\C} \rightarrow
K_0^{\alg}(|X|)\otimes {\C}\] by the formula \[\tilde{h}(\xi) =
|j|^*(\lambda^{|Y|}_{-1}Y)^{-1} \cap h(\xi).\] There is a natural homomorphism
(in the case $|X|$ is projective) \[L \co K^{\eq}_X(Y) \rightarrow
K_{|X|}^{\eq}(|Y|)\otimes {\C} \] for $X$ closed in $Y$, given by the
pull-back homomorphism induced by the inclusion of $|Y|$ in $Y$. Now define a
homomorphism

\[L_\bullet \co K_0^{\eq}(X) \rightarrow K_0^{\alg}(|X|)\otimes {\C}\] to be the
composition \[L_\bullet \co K_0^{\eq}(X) \stackrel{h^{-1}}{\rightarrow} K_X^{\eq}(Y)
\stackrel{L}{\rightarrow} K_{|X|}^{\eq}(|Y|)\otimes {\C}
\stackrel{\tilde{h}}{\rightarrow} K_0^{\alg}(|X|)\otimes {\C}\] for some
closed embedding of $X$ in a non-singular $Y$.

The Lefschetz-Riemann-Roch theorem for singular varieties can now be stated as
follows.

\begin{thm}[Baum, Fulton \& Quart] \label{LRRT} The homomorphism \[L_\bullet \co
K_0^{\eq}(X) \rightarrow K_0^{\alg}(|X|)\otimes {\C}\] is independent of the
embedding of $X$ in a non-singular $Y$ and is compatible with cap-products,
cartesian products, restrictions to open equivariant sub-varieties. Moreover
$L_\bullet$ is covariant for proper morphisms and if $X$ is non-singular around a
component $V$ of $|X|$, then \[L^V_\bullet [{\mathcal O}_X] = (\lambda^V_{-1}
X)^{-1} \cap [{\mathcal O}_V] \in K_{\alg}^0(V)\otimes {\C}.\] \end{thm}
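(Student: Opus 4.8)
The plan is to follow the argument of Baum, Fulton and Quart in \cite{baum-fulton-quart_LefschetzRiemann-Roch}, which runs parallel to the construction of $\tau_\bullet$ in \cite{baum-fulton-macpherson1979_Riemann-Roch} reviewed above; since the statement is classical we will only sketch the structure of the proof and refer there for the details. The homomorphism $L_\bullet$ is assembled from the elementary operations $h$, $h^{-1}$, $\alpha^\bullet$, the pull-back $L$ along $|Y|\hookrightarrow Y$, and the eigensheaf decomposition $[{\mathcal F}]\mapsto\sum[{\mathcal F}_a]\tensor a$, and the strategy is to verify each asserted property by tracking it through these building blocks.

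First I would establish independence of the embedding. Given two closed equivariant embeddings $X\hookrightarrow Y_1$ and $X\hookrightarrow Y_2$ into non-singular equivariant varieties, one compares them through the diagonal embedding $X\hookrightarrow Y_1\times Y_2$. The key point is that $h^{-1}$ together with the pull-back $L$ is compatible with exterior products and with a further closed equivariant embedding of one ambient non-singular variety into another; the comparison then reduces to a projection-formula identity for the extra normal directions, whose contribution is precisely the invertible $\lambda_{-1}$ factor of the relevant conormal sheaf that has been absorbed into the definition of $\tilde h$. I expect this to be the main obstacle: it is the step that forces the normalization built into $\tilde h$ and that requires the careful bookkeeping of conormal contributions, exactly as in the non-equivariant case of \cite{baum-fulton-macpherson1979_Riemann-Roch}.

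Next I would check the formal properties. Compatibility with cap products, cartesian products and restriction to open equivariant subvarieties is inherited directly from the corresponding compatibilities of $h$, of $\alpha^\bullet$, and of the $a$-eigensheaf decomposition, all of which are local in nature. Covariance for proper morphisms is obtained by factoring a proper equivariant morphism as a closed equivariant embedding followed by a projection from an equivariant projective bundle: the embedding case is immediate from the construction, while the projection case is treated by the same Koszul-resolution computation as in the non-equivariant Riemann-Roch theorem, carried out one $a$-eigenspace at a time so that the localization homomorphism intertwines the two sides.

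Finally I would prove the local formula. Choosing the embedding so that (as always) $Y$ is non-singular, near the component $V$ of $|X|$ the sheaf ${\mathcal O}_X$ has the Koszul complex of the conormal sheaf of $X$ in $Y$ as a locally free equivariant resolution, so $h^{-1}[{\mathcal O}_X]$ is represented by that complex; applying the pull-back $L$ to $|Y|$ and then $\tilde h$ turns the alternating sum of the exterior powers of this conormal sheaf into $(\lambda^{|Y|}_{-1}Y)^{-1}$ capped against the Koszul contribution of the conormal sheaf of $|X|$ in $|Y|$, and the two $\lambda_{-1}$ factors combine to $(\lambda^V_{-1}X)^{-1}$ because the conormal sheaves of $V$ in $X$, of $V$ in $|Y|$ and of $X$ in $Y$ fit into the evident short exact sequence of equivariant bundles near $V$. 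This yields $L^V_\bullet[{\mathcal O}_X] = (\lambda^V_{-1}X)^{-1}\cap[{\mathcal O}_V]$ and completes the proof; for the full details we refer to \cite{baum-fulton-quart_LefschetzRiemann-Roch}.
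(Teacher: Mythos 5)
Your proposal is consistent with the paper's treatment: the paper states this theorem without proof, quoting it directly from Baum, Fulton and Quart, and your sketch is a faithful outline of that cited argument (independence of embedding via product embeddings and the normalization built into $\tilde h$, formal properties inherited from the building blocks, covariance via factorization, and the local formula from the Koszul resolution together with the eigenspace splitting of the normal bundle along $V$), deferring the details to the same reference. No discrepancy to report, beyond noting that your ``evident short exact sequence'' in the last step is more precisely the two decompositions of the normal bundle of $V$ in $Y$ (through $X$ and through $|Y|$) combined with the vanishing of the fixed part of the normal bundle of $V$ in $X$.
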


Suppose now that $X$ is an equivariant projective algebraic variety and that
${\mathcal E}$ is an equivariant locally free sheaf on $X$. By pushing forward
to a point and combining the two theorems above we get the following
Lefschetz-Riemann-Roch formula due to Baum, Fulton, MacPherson and Quart
\[ \sum
(-1)^i \tr (x \co H^i(X,{\mathcal E}) \rightarrow H^i(X,{\mathcal E})) =
\Ch^\bullet(L^\bullet({\mathcal E})) \cap \tau_\bullet L_\bullet({\mathcal O}_X). \]
Here
$\cap \co H_\bullet(|X|,{\C})\otimes H^\bullet(|X|,{\C}) \rightarrow {\C}$
is the cap product pairing between cohomology and homology. Let $C$ be the
finite set which indexes connected components of $|X|$, i.e.
\[|X| =
\coprod_{c\in C} |X|_c.\]
Denote $\dim |X|_c = n_c$. Suppose furthermore that
${\mathcal E} = {\mathcal L}^k$ where ${\mathcal L}$ is an equivariant line
bundle over $X$. Say $c_1({\mathcal L}) = \alpha$ and denote $\alpha|_{|X|_c} =
\alpha_c$. Let $a_c \in {\C}$ be such that
\[
\Ch^\bullet(L_c^\bullet({\mathcal L}^k)) = \exp(k \alpha_c) \otimes a_c^k.\]
The
Lefschetz-Riemann-Roch formula then reads
\begin{align*} \sum (-1)^i \tr (x \co
H^i(X,{\mathcal L}^k) \rightarrow H^i(X,{\mathcal L}^k)) & =  \sum_{c\in
C}a_c^k\exp(k\alpha_c) \cap \tau_\bullet(L_\bullet^c({\mathcal O}_X)) \nonumber \\ & =
\sum_{c\in C} a_c^k \left(\sum_{i=0}^{n_c}\frac{1}{i!} (\alpha_c)^i \cap
\tau_i(L^c_\bullet({\mathcal O}_X)) k^i\right) \end{align*}
If $|X|_c$
is contained in the non-singular part of $X$, we get that
\begin{align*}
\exp(k \alpha_c) \cap \tau_\bullet(L_\bullet^c({\mathcal O}_X)) &= \exp(k
\alpha_c) \cap \Ch^\bullet(\lambda_{-1}^cX)^{-1} \cap \tau_\bullet ([{\mathcal
O}_{|X|_c}])\\ & =  ( \exp(k \alpha_c) \cup \Ch^\bullet(\lambda_{-1}^cX)^{-1}
\cup \Td(T_{|X|_c})) \cap [|X|_c]. \end{align*}

\bibliographystyle{himpel_gtart}
\bibliography{references}

\end{document}